\documentclass[12pt]{amsart}
\usepackage{amsmath,amssymb,amsbsy,amsfonts,amsthm,latexsym,
  amsopn,amstext,amsxtra,euscript,amscd,stmaryrd,mathrsfs,
  cite,array,mathtools,enumerate, bm}
\usepackage{url}
\usepackage[colorlinks,linkcolor=blue,anchorcolor=blue,citecolor=blue,backref=page]{hyperref}
\usepackage{color}
\usepackage{graphics,epsfig}
\usepackage{graphicx}
\usepackage{float} 
\usepackage[english]{babel}
\usepackage{mathtools}
\usepackage{todonotes}
\usepackage{url}
\usepackage{comment}

\usepackage[norefs,nocites]{refcheck}

\hypersetup{breaklinks=true}

\usepackage[margin=1.5in,footskip=0.25in]{geometry}

\usepackage[norefs,nocites]{refcheck}
\usepackage[english]{babel}
\begin{document}

\newtheorem{thm}{Theorem}
\newtheorem{lem}[thm]{Lemma}
\newtheorem{claim}[thm]{Claim}
\newtheorem{cor}[thm]{Corollary}
\newtheorem{prop}[thm]{Proposition} 
\newtheorem{definition}[thm]{Definition}
\newtheorem{rem}[thm]{Remark} 
\newtheorem{question}[thm]{Question}
\newtheorem{conj}[thm]{Conjecture}
\newtheorem{prob}{Problem}
\newtheorem{lemma}[thm]{Lemma}

\theoremstyle{remark}
\newtheorem{rmk}[thm]{Remark}

\newcommand{\GL}{\operatorname{GL}}
\newcommand{\SL}{\operatorname{SL}}
\newcommand{\lcm}{\operatorname{lcm}}
\newcommand{\ord}{\operatorname{ord}}
\newcommand{\Op}{\operatorname{Op}}
\newcommand{\Tr}{\operatorname{Tr}}
\newcommand{\Nm}{\operatorname{Nm}}
\newcommand{\discr}{\operatorname{disrc}}
\newcommand{\Ker}{\operatorname{Ker}\,}
\renewcommand{\Im}{\operatorname{Im}}
\renewcommand{\Re}{\operatorname{Re}}

\numberwithin{equation}{section}
\numberwithin{thm}{section}
\numberwithin{table}{section}

\def\sssum{\mathop{\sum\!\sum\!\sum}}
\def\ssum{\mathop{\sum\ldots \sum}}
\def\iint{\mathop{\int\ldots \int}}
\def\hh{h} 

\def\vol {{\mathrm{vol\,}}}
\def\squareforqed{\hbox{\rlap{$\sqcap$}$\sqcup$}}
\def\qed{\ifmmode\squareforqed\else{\unskip\nobreak\hfil
\penalty50\hskip1em\null\nobreak\hfil\squareforqed
\parfillskip=0pt\finalhyphendemerits=0\endgraf}\fi}

\def \balpha{\bm{\alpha}}
\def \bbeta{\bm{\beta}}
\def \bgamma{\bm{\gamma}}
\def \blambda{\bm{\lambda}}
\def \bchi{\bm{\chi}}
\def \bphi{\bm{\varphi}}
\def \bpsi{\bm{\psi}}
\def \bomega{\bm{\omega}}
\def \btheta{\bm{\vartheta}}

\newcommand\veca{\boldsymbol{a}}
\newcommand\vecb{\boldsymbol{b}}
\newcommand\vech{\boldsymbol{h}}
\newcommand\vecq{\boldsymbol{q}}
\newcommand\vecu{\boldsymbol{u}}
\newcommand\vecv{\boldsymbol{v}}
\newcommand\vecw{\boldsymbol{w}}
\newcommand\vecx{\boldsymbol{x}}
\newcommand\vecy{\boldsymbol{y}}
\newcommand\vecz{\boldsymbol{z}}

\newcommand{\bfxi}{{\boldsymbol{\xi}}}
\newcommand{\bfrho}{{\boldsymbol{\rho}}}

\def\sfB{\mathsf {B}}
\def\sfJ{\mathsf {J}}
\def\sfG {\mathsf {G}}
\def\sfK {\mathsf {K}}
\def\sfS {\mathsf {S}}

\def\sL {\mathscr  {L}}

 \def \xbar{\overline x}
  \def \ybar{\overline y}

\def\cA{{\mathcal A}}
\def\cB{{\mathcal B}}
\def\cC{{\mathcal C}}
\def\cD{{\mathcal D}}
\def\cE{{\mathcal E}}
\def\cF{{\mathcal F}}
\def\cG{{\mathcal G}}
\def\cH{{\mathcal H}}
\def\cI{{\mathcal I}}
\def\cJ{{\mathcal J}}
\def\cK{{\mathcal K}}
\def\cL{{\mathcal L}}
\def\cM{{\mathcal M}}
\def\cN{{\mathcal N}}
\def\cO{{\mathcal O}}
\def\cP{{\mathcal P}}
\def\cQ{{\mathcal Q}}
\def\cR{{\mathcal R}}
\def\cS{{\mathcal S}}
\def\cT{{\mathcal T}}
\def\cU{{\mathcal U}}
\def\cV{{\mathcal V}}
\def\cW{{\mathcal W}}
\def\cX{{\mathcal X}}
\def\cY{{\mathcal Y}}
\def\cZ{{\mathcal Z}}

\def\NmQR{N(m;Q,R)}
\def\VmQR{\cV(m;Q,R)}

\def\Xm{\cX_m}

\def \A {{\mathbb A}}
\def \B {{\mathbb A}}
\def \C {{\mathbb C}}
\def \F {{\mathbb F}}
\def \G {{\mathbb G}}
\def \L {{\mathbb L}}
\def \K {{\mathbb K}}
\def \N {{\mathbb N}}
\def \PP {{\mathbb P}}
\def \Q {{\mathbb Q}}
\def \R {{\mathbb R}}
\def \Z {{\mathbb Z}}

\def \fU{\mathfrak U}
\def \fS{\mathfrak S}
\def \fM{\mathfrak M}
\def \fE{\mathfrak E}
\def \fB{\mathfrak B}
\def \fA{\mathfrak A}

\def\GL{\operatorname{GL}}
\def\SL{\operatorname{SL}}
\def\PGL{\operatorname{PGL}}
\def\PSL{\operatorname{PSL}}
\def\li{\operatorname{li}}
\def\sym{\operatorname{sym}}

\def\Mob{M{\"o}bius }

\def\fF{\mathfrak{F}}
\def\M{\mathsf {M}}
\def\T{\mathsf {T}}

\def\e{{\mathbf{\,e}}}
\def\ep{{\mathbf{\,e}}_p}
\def\eq{{\mathbf{\,e}}_q}

\def\fF{\EuScript{F}}
\def\M{\mathsf {M}}
\def\T{\mathsf {T}}

\def\sR{{\mathscr R}}

\def\\{\cr}
\def\({\left(}
\def\){\right)}
\def\fl#1{\left\lfloor#1\right\rfloor}
\def\rf#1{\left\lceil#1\right\rceil}

\def \oF {\overline \F}

\newcommand{\pfrac}[2]{{\left(\frac{#1}{#2}\right)}}
\newcommand{\Mod}[1]{\ (\mathrm{mod}\ #1)}

\def \Prob{{\mathrm {}}}
\def\e{\mathbf{e}}
\def\ep{{\mathbf{\,e}}_p}
\def\epp{{\mathbf{\,e}}_{p^2}}
\def\em{{\mathbf{\,e}}_m}

\def\Res{\mathrm{Res}}
\def\Orb{\mathrm{Orb}}

\def\vec#1{\mathbf{#1}}
\def \va{\vec{a}}
\def \vb{\vec{b}}
\def \vc{\vec{c}}
\def \vh{\vec{h}}
\def \vm{\vec{h}}
\def \vs{\vec{s}}
\def \vu{\vec{u}}
\def \vv{\vec{v}}
\def \vx{\vec{x}}
\def \vy{\vec{y}}
\def \vz{\vec{z}}

\def\flp#1{{\left\langle#1\right\rangle}_p}
\def\T {\mathsf {T}}

\def\mand{\qquad\mbox{and}\qquad}

\def \l {{\lambda}}
\def \a {{\alpha}}
\def \b {{\beta}}
\def \f {{\phi}}
\def \r {{\rho}}
\def \R {{\mathbb R}}
\def \H {{\mathbb H}}
\def \N {{\mathbb N}}
\def \C {{\mathbb C}}
\def \Z {{\mathbb Z}}
\def \Q {{\mathbb Q}}
\def \e {{\epsilon }}
\def\GL{\ensuremath{\mathop{\textrm{\normalfont GL}}}}
\def\SL{\ensuremath{\mathop{\textrm{\normalfont SL}}}}
\def\Gal{\ensuremath{\mathop{\textrm{\normalfont Gal}}}}
\def\SU{\ensuremath{\mathop{\textrm{\normalfont SU}}}}
\def\SO{\ensuremath{\mathop{\textrm{\normalfont SO}}}}

\title[Large values of $L(\sigma,\chi)$ for subgroups of characters]{Large values of $L(\sigma,\chi)$ for subgroups of characters}

\author[P. Darbar] {Pranendu Darbar}
\address{School of Mathematics and Statistics, University of New South Wales, Sydney NSW 2052, Australia}
\email{darbarpranendu100@gmail.com}

 \author[B. Kerr] {Bryce Kerr}
\address{School of Science, University of New South Wales,
Canberra, ACT 2106, Australia}
\email{bryce.kerr@unsw.edu.au}

\author[M. Munsch]{Marc Munsch}
\address{Institut Camille Jordan, University of  Jean Monnet,
20, rue du Docteur Rémy Annino, 42 000 Saint-Etienne Cedex 2, France}
\email{marc.munsch@univ-st-etienne.fr}

\author[I. E. Shparlinski] {Igor E. Shparlinski}
\address{School of Mathematics and Statistics, University of New South Wales, Sydney NSW 2052, Australia}
\email{igor.shparlinski@unsw.edu.au}

 \date{\today}

\begin{abstract}  We obtain (conditional and unconditional) results on large values of $L$-functions 
$L(s,\chi)$ in the critical strip $1/2 \leq \Re s \leq 1$ when 
the character $\chi$ runs through a thin subgroup of all characters 
modulo an integer $q$.  Some of these bounds are based on new zero-density 
estimates on average over a subgroup of characters. These bounds follow from a 
mean value estimate for character sums which is based on the work of 
D.~R.~Heath-Brown (1979). 
As yet another application of this mean value estimate, we obtain an unconditional version 
of a conditional (on the Generalised Riemann Hypothesis) result of  
Z.~Rudnick and A.~Zaharescu (2000) about gaps between primitive roots.  
 \end{abstract}

\keywords{Dirichlet $L$-functions, large values, resonance method, character sums, zero density theorems, subgroups of characters, primitive roots.}
\subjclass[2020]{11L40, 11M06, 11N25}

\maketitle

\tableofcontents

\section{Introduction}
\subsection{Motivation and set-up}
A classical problem in analytic number theory is to understand the distribution of values of Dirichlet $ L$-functions 
\[
L(s,\chi) = \sum_{n=1}^\infty \frac{\chi(n)}{n^s}, \qquad \Re s > 0, 
\]
corresponding to non-principal multiplicative characters modulo $q$ 
in the critical strip $\Re s \in (0,1]$, see~\cite[Chapter~3]{IwKow}
for background on characters. 

On one hand, there are numerous works about the value distribution of $L(s,\chi)$ when the  characters $\chi$  run through the  
full group $\cX_q$ of  multiplicative characters modulo $q$, 
 see, for example~\cite{AMMP, ArCr, BKT, BFKMM1, BFKMM2, DWZ, GrSo-3, KMN2, Lam0, Lam1, LeLe, Sound, Wu1, Wu2} and references therein.
Furthermore, there are several recent works where the values of $L(s,\chi)$ are studied over some partial sets 
of $\chi$, most commonly characters of fixed small order~\cite{DDLL, DM2025, GrSo-2, HeLi, Lam2} or characters running through a subgroup or a coset of $\cX_q$, see~\cite{Erm, FKM, 
GaYo, KLM, KMN1, Lou1, Lou2, LoMu1, LoMu2, MunShp}.

Here we continue this line of research and study the large values of  $L(s,\chi)$ when $\chi$ varies over rather thin subgroups of $\cX_q$. 
In particular, in Section~\ref{sec: extrval},  we investigate three different regimes of real values of $s$:
\begin{equation}
\label{eq:ranges}
s = 1, \qquad  1> s > 1/2, \qquad s = 1/2,
\end{equation}
using the resonance method combined with different arguments depending on the regime.  Furthermore, in Section~\ref{sec:zerodens}, we also obtain a new zero-density estimate 
for $L$-functions averaged over a thin subgroup of $\cX_q$, which is required for the second case in~\eqref{eq:ranges}.  In turn, this estimate rests on a new bound on the mean value of character sums over sets of characters with small product set, which improves and generalises that of Montgomery~\cite{Mont2}, see Section~\ref{sec:meanvalchar}. This result has several independent applications. In particular, we  improve some previous results of Montgomery~\cite[Theorem~2]{Mont2} and of 
Rudnick and Zaharescu~\cite[Theorem~3(i)]{RuZa}  for  a question 
related to the distribution of gaps between the powers of primitive roots, see Section~\ref{sec:Appl} for details.

We emphasise that the sequence in which we prove our results deviates from that in which we formulate them.  In particular we start with proving Theorem~\ref{thm:MeanVal}, from which we derive Theorem~\ref{thm:mean-value-sub}, and at the end of the paper we prove the results at the beginning of 
Section~\ref{sec: extrval} on extreme values of $L$-functions. 

\subsection{Notation and conventions}

As usual, the notations  $A \ll B$, $B \gg A$ and $A = O(B)$, are equivalent
to $|A|  \le c B$ for some constant $c>0$,
which throughout this work may sometimes, where obvious, 
depend on the real positive parameters $\delta$, $\varepsilon$, $\sigma$ and some other parameters 
assumed to be fixed, and  are absolute otherwise.   

We use $\Z_q$ to denote the residue ring modulo an integer $q \ge 1$ and
we use  $\Z_q^*$ to denote its group of units. For a prime $q$, we use 
$\F_q$ and $\F_q^*$ instead of $\Z_q$ and $\Z_q^{*}$, respectively. We also let $\cX_q$ denote the group of multiplicative characters mod $q$.

For a finite set $\cS$ we use $\#\cS$ to denote its cardinality. 

For an integer $\nu \ge 1$ and $z \ge 1$, we define 
$\log_\nu z$ recursively by $\log_1 z = \max\{2,  \log z\}$ and 
for $\nu\ge 2$ by $\log_\nu z = \max\{2,  \log_{\nu-1} z\}$. 

We always use $\gamma_0 = 0.5772\ldots$ to denote the 
Euler--Mascheroni constant. 

As usual, we abbreviate the Generalised Riemann Hypothesis as GRH.

\section{Main results}

\subsection{Extreme values}
\label{sec: extrval}
We show that there are large values of $L(s,\chi)$ even if $\chi$ lies inside a thin subgroup $\cH$ of $\cX_q$. 

Our first result deals with the case $s = 1$.

\begin{thm}\label{th1}
	Assume that $q$ is a sufficiently large integer and let $\delta > 0$ be fixed.  Let $\cH$ be a subgroup of $\mathcal{X}_q$ of order $H=\#\cH$, satisfying
\begin{itemize}
\item[(i)]  $H\geq  (\log q)^{1+\delta}$ unconditionally; 
\item[(ii)] $H\geq  (\log_2 q)^{1+\delta}$,  assuming the GRH. 
\end{itemize}
	There exists a non-principal character $\chi \in \cH$ such that 
\[
 \left|L(1,\chi)\right|  \ge e^{\gamma_0}\!\(\log_2 H  +\log_3 H - \log_2  4 - \log \( 1+\delta^{-1}\)- 1  + o(1)\).  
\]
\end{thm}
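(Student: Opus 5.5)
We use the resonance method with a multiplicative resonator, as in Aistleitner--Mahatab--Munsch--Peyrot for $L(1,\chi)$, adapted to the subgroup $\cH$. Let $\cH^\perp=\{a\in\Z_q^*:\ \chi(a)=1 \text{ for all } \chi\in\cH\}$ be the annihilator, of index $H$ in $\Z_q^*$. Orthogonality then reads
\[
\frac1H\sum_{\chi\in\cH}\chi(m)\overline{\chi}(n)=\mathbf 1[m\equiv n \bmod \cH^\perp],\qquad \gcd(mn,q)=1 .
\]
This identity is what restricts the usable resonator length: for the diagonal to dominate, we need the products of resonator support elements not to collide modulo $\cH^\perp$. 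Collisions are cheaply avoided only if the resonator length $N$ satisfies roughly $N\le H^{1-\varepsilon}$, or more precisely $N$ small enough that distinct integers up to $N^2$ are rarely congruent modulo $\cH^\perp$. Rather than relying on that, we simply keep all terms nonnegative, which sidesteps the issue entirely.

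\textbf{Resonator.} Take $R(\chi)=\prod_{p\in\cP}(1-r(p)\chi(p))^{-1}$ over primes $p\in\cP\subset[\log H\cdot\log_2 H,\ (\log H)^{A}]$ with $p\nmid q$. Set $r(p)=\lambda/ p$-type weights, specifically $r(p)=1-\frac{\log H\,\log_2 H}{p}$ or the AMMP choice $r(p)=\frac{\lambda}{p}$ truncated so that $\prod (1-r(p))^{-1}$ has size about $H^{1-\varepsilon}$-scaled as allowed. Here $\lambda$ is chosen in terms of $\log H\log_2 H$.

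\textbf{Approximating $L(1,\chi)$.} Write $L(1,\chi)$ as a short Euler product or Dirichlet series of length $Y$. Unconditionally, we use $L(1,\chi)=\sum_{n\le Y}\chi(n)/n+O(\sqrt q\log q/Y)$ with $Y=q^{A}$. Alternatively, we use a smoothed sum $\sum_n\chi(n)n^{-1}e^{-n/Y}$. We need $\log Y$ to be comparable to the range of primes in the resonator; this is exactly where the condition on $H$ enters. With the zero-free region and Siegel-type care, $L(1,\chi)\approx\prod_{p\le (\log q)^{1+o(1)}}(1-\chi(p)/p)^{-1}$ for all but few $\chi$. Here the condition $H\ge(\log q)^{1+\delta}$ lets the resonator primes (up to about $H$) reach beyond the truncation $\log q$. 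Under GRH the product can be truncated at $(\log q)^{2+o(1)}$, and instead one uses the GRH bound $\log L(1,\chi)=\sum_{p\le (\log q)^2}\chi(p)/p+o(1)$, so $H\ge(\log_2 q)^{1+\delta}$ suffices. We treat the possibly exceptional characters and the principal character separately: the principal character is removed from the ratio, and its contribution is shown to be smaller.

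\textbf{Main estimate.} We compare
\[
S_1=\sum_{\chi\in\cH}L(1,\chi)|R(\chi)|^2,\qquad S_2=\sum_{\chi\in\cH}|R(\chi)|^2 .
\]
By orthogonality, expanding $L(1,\chi)$ as the Euler product over $p\le Y$ (or its Dirichlet series) gives
\[
S_1=H\sum_{m\equiv kn \bmod \cH^\perp}\frac{r(m)r(n)}{k}.
\]
All terms are nonnegative, so dropping the off-diagonal ($mk\ne n$ as integers) yields a lower bound $S_1\ge H\sum_{n}r(n)\sum_{k\mid n}r(n/k)/k$. This is the key advantage at $s=1$: positivity. Similarly $S_2=H\sum_{m\equiv n}r(m)r(n)$. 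Here the off-diagonal is an upper-bound issue, controlled by requiring that the resonator support $\{n\}$ has $\prod(1+\dots)$ with $n\le N$, $N<H$ (equivalently that each $n$ is congruent modulo $\cH^\perp$ to at most about $N/H$ others). This forces $\sum\log p\lesssim \log H$, i.e. the primes used satisfy $\sum_{p\in\cP}\log p\approx\log H$.

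\textbf{Optimisation and main obstacle.} The ratio $S_1/S_2$ becomes $\prod_{p}(1+r(p)/p+\dots)$, effectively $\prod_{p\le \log H\log_2 H}(1-1/p)^{-1}\cdot(\text{losses})$, which by Mertens equals $e^{\gamma_0}(\log_2 H+\log_3H+\dots)$. The constants $-\log_2 4$, $-1$ and $-\log(1+\delta^{-1})$ come respectively from the collision/length constraint (support size roughly $H^{1/2}$-type, giving $\log 4$ factors), from the Rankin-type optimisation, and from the ratio $\log Y/\log H$ allowed by $H\ge(\log q)^{1+\delta}$. The hardest step, which we expect to be the main obstacle, is controlling $S_2$ from above: bounding congruent pairs modulo $\cH^\perp$ without losing more than $o(1)$ in the exponent. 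We would try to handle it with a Rankin trick, choosing $r$ supported on $n\le N=H^{1-o(1)}$ and using that distinct $m,n\le N<H$ may still be congruent modulo $\cH^\perp$. To deal with this, Cauchy--Schwarz gives $S_2\le H\cdot\#\{\text{classes}\}^{-1}\cdots$, so instead we bound $S_2\le H\sum_n r(n)^2\cdot\max_c\#\{m\equiv c\}$ via a large-sieve-type count.
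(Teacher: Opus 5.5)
There is a genuine gap, and it sits exactly at the step you single out as the main obstacle.

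You try to bound the off-diagonal part of $S_2=H\sum_{m\equiv n \bmod \cH^\perp}r(m)r(n)$ from above, by truncating the resonator at length $N<H$ and counting collisions. This cannot work in general. $\cH^\perp=\Ker\cH$ is an arbitrary subgroup of index $H$ and may contain small integers: if $2\in\Ker\cH$, every pair $(n,2n)$ collides, whatever $N$ is. So your claim that each $n$ has only about $N/H$ partners is false, and $S_2$ can exceed its diagonal by a large factor. Truncation also destroys the complete multiplicativity you need to turn $S_1/S_2$ into an Euler product.

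The missing idea, from Aistleitner--Mahatab--Munsch and used in the paper, is that no upper bound for $S_2$ is needed. Use the \emph{untruncated}, completely multiplicative resonator $R(\chi)=\prod_{p\le X}(1-r_p\chi(p))^{-1}$ with $r_p=1-p/X$, taken over \emph{all} primes $p\le X=\kappa\log H\log_2 H$. (Your primes in $[\log H\log_2H,(\log H)^A]$ would only give a bounded ratio.) For each $k$, restrict the $S_1$-sum to $k\mid n$, write $n=k\ell$, and use $r_{k\ell}=r_kr_\ell$ to get
\[
\sum_{h\in\Ker\cH}\ \sum_{khm\equiv n \bmod q} r_mr_n\ \ge\ r_k\sum_{h\in\Ker\cH}\ \sum_{hm\equiv \ell \bmod q} r_mr_\ell .
\]
Hence $S_1\ge S_2\sum_k a_kr_k=S_2\prod_{p\le X}(1-r_p/p)^{-1}=S_2\,e^{\gamma_0}(\log X-1+o(1))$. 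Every collision in $S_2$, whatever the structure of $\Ker\cH$, is matched by one in $S_1$.

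Your account of how $H$ and the constants enter is also incorrect. The hypothesis on $H$ is used only to make negligible the characters where $L(1,\chi)$ is not approximated. For this one needs the \emph{lower} bound $S_2\ge H\sum_m r_m^2$ together with $|R(\chi)|\le R(\chi_0)$.

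Unconditionally, a short product up to $(\log q)^{1+o(1)}$ obtained from zero-free regions or zero-density estimates leaves a power of $q$ exceptional characters. $H=(\log q)^{1+\delta}$ cannot absorb that. The paper instead uses Lemma~\ref{lem:L1approx}, which approximates $L(1,\chi)$ by the long product $L(1,\chi;q^4)$ outside a set of only $(\log q)^{O(\eta)}$ characters. Positivity, $b_k\ge a_k$, makes the long product harmless.

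Each bad character, and $\chi_0$, costs at most $O(\log q)R(\chi_0)^2$. By Lemma~\ref{lem:q2}, $R(\chi_0)^2/\sum_m r_m^2=H^{\kappa\log 4+o(1)}$. So one needs $(1-\kappa\log 4)\log H\ge(1+6\eta)\log_2 q$, which under $H\ge(\log q)^{1+\delta}$ means $\kappa<\delta/((1+\delta)\log 4)$.

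Letting $\kappa$ tend to this bound in $\log X=\log_2 H+\log_3 H+\log\kappa$ produces the terms $-\log_2 4-\log(1+\delta^{-1})$. The $-1$ comes from the weights $r_p=1-p/X$. Neither comes from a support-size constraint or a ratio $\log Y/\log H$.

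Under GRH only $\chi_0$ is bad, with $L(1,\chi_0;Y)\ll\log_2 q$, which is why $H\ge(\log_2 q)^{1+\delta}$ suffices.

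Incidentally, your first option, $L(1,\chi)=\sum_{n\le q^A}\chi(n)/n+O(q^{1/2-A}\log q)$ with $A>1/2$, would also suffice unconditionally once combined with the positivity argument above. It leaves only $\chi_0$ to discard, at cost $O(\log q)R(\chi_0)^2$.
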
  

We note that when $\delta \to \infty$ (which is an admissible regime in Theorem~\ref{th1}) we have 
$\log \( 1+\delta^{-1}\) \to 0$.

Next, we consider real values $s = \sigma \in (1/2,1)$ inside the critical strip. Recall that the implied constants may depend on $\delta$ and  $\sigma$.

 \begin{thm}\label{th3} Assume that $q$ is a sufficiently large prime and let $1/2<\sigma<1$ and  $0 < \delta < 1$ be fixed. 
  Let $b(\sigma)=8(1-\sigma)/(2\sigma-1)$ and $\cH$ be a subgroup of $\cX_q$  of order $H = \#\cH$, satisfying 
  \begin{itemize}
\item[(i)]  $H\geq q^{(2-2\sigma)(2-\sigma) + \delta}$ unconditionally; 
\item[(ii)] $H  \geq (\log q)^{(1+\delta) b(\sigma)}$,  assuming the GRH. 
\end{itemize}
There exists a non-principal character $\chi \in \cH$ such that
\[
\log \left|L(\sigma,\chi)\right| \gg (\log H)^{1-\sigma} (\log_2 H)^{-\sigma},
\]
as $H\to \infty$. 
\end{thm}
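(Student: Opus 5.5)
We propose to prove Theorem~\ref{th3} by a resonance method adapted to the subgroup $\cH$. We first replace $L(\sigma,\chi)$ by $\log L(\sigma,\chi)$ written as a short Dirichlet polynomial over primes. By the annihilator duality, $\cH$ is the group of characters trivial on a subgroup $\cG\subseteq\Z_q^*$ of index $H$. So $\sum_{\chi\in\cH}\chi(n)=H\,\mathbf 1[n\in\cG]$, and orthogonality over $\cH$ sees only the cosets of $\cG$.

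\textbf{Step 1: short Euler product.} For $\chi\in\cH$ such that $L(s,\chi)$ has no zeros in the rectangle $\Re s\ge \sigma_0$, $|\Im s|\le 2$, with $\sigma_0\in(1/2,\sigma)$ close to $\sigma$, a standard Granville--Soundararajan-type argument (Borel--Carathéodory plus Perron) gives
\[
\log L(\sigma,\chi)=\sum_{n\le Y}\frac{\Lambda(n)\chi(n)}{n^{\sigma}\log n}+O\big(Y^{-c}\log q\big), \qquad Y=(\log q)^{A}.
\]
Under GRH every $\chi$ qualifies, and $A$ can be taken about $2/(2\sigma-1)$ up to $\delta$. Unconditionally we discard the exceptional characters. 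Their number is bounded by the zero-density estimate averaged over subgroups from Section~\ref{sec:zerodens}, which we take to give a count of the form $\ll q^{(2-2\sigma_0)(2-\sigma_0)+o(1)}$. Hence the hypothesis $H\ge q^{(2-2\sigma)(2-\sigma)+\delta}$, with $\sigma_0$ close enough to $\sigma$, makes the exceptional set $o(H)$. In that case we may even take $Y$ a small power of $q$.

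\textbf{Step 2: resonator.} We work with the averages
\[
S_1=\sum_{\chi\in\cH'}|R(\chi)|^2,\qquad S_2=\sum_{\chi\in\cH'}|R(\chi)|^2\,\Re\sum_{p\le Y}\frac{\chi(p)}{p^{\sigma}},
\]
where $\cH'$ is the set of good non-principal characters from Step~1. The resonator is $R(\chi)=\prod_{p\in\cP}(1+\chi(p)f(p))$, with $\cP$ the primes in a range $[L\log L, L^{c}]$ and $f(p)=L^{1-\sigma}/(p^{\sigma}\log p)$ (Aistleitner-type weights, as in Bondarenko--Seip). The length of $R$ is chosen so that all products $mn$ and $mnp$ that arise stay below $q$. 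The key point is that these can then only collide modulo $\cG$ when they are genuinely related, so that the off-diagonal terms vanish. This requires the coset structure to separate integers up to size $\approx H$. Concretely, for $m\neq n\le x$ with $m/n\in\cG$, we use that a subgroup of index $H$ meets an interval in few elements. For the non-identity terms this forces total length $\le H^{1-\varepsilon}$, hence $\log(\text{length})\asymp\log H$, and we choose $L\asymp\log H\log_2 H$. Then the ratio $S_2/S_1\gg(\log H)^{1-\sigma}(\log_2H)^{-\sigma}$. The principal character and the removed characters contribute a negligible amount, controlled by a trivial bound of $|R|^2$ against $H$ times the diagonal.

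\textbf{Step 3: conclusion and main obstacle.} Since the ratio $S_2/S_1$ is an average of $\Re\sum_p\chi(p)p^{-\sigma}$ over $\cH'$, some $\chi\in\cH'$ has this sum at least the ratio. Prime powers and the error term from Step~1 are $O(1)$. Under GRH we need the resonator primes to be at most $Y=(\log q)^A$. This is where $H\ge(\log q)^{(1+\delta)b(\sigma)}$ enters: it makes $L^{c}\approx(\log H)^{O(1)}$, which fits inside $Y$, and the tail $\sum_{p>Y}$ is negligible. The main obstacle is the off-diagonal analysis: the resonator must be long (of length $H^{1-\varepsilon}$) while orthogonality is only available modulo $\cG$, not over all of $\cX_q$. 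If the direct counting fails, we would first try restricting $R$ to integers below $H^{1/2-\varepsilon}$ whose ratios are forced to be distinct in $\Z_q^*/\cG$. This loses only constants in $\log H$ and so does not affect the final $\gg$.
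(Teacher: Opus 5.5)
Your Step~2 has a genuine gap, and it is the core of the argument. Orthogonality over $\cH$ gives
\[
\sum_{\chi\in\cH}\chi(p)|R(\chi)|^2=H\sum_{g\in\cG}\ \sum_{\substack{m,n\\ gpm\equiv n \bmod q}} r_m r_n ,
\]
where $\cG$ (the paper's $\Ker\cH$) is the group of $H$-th power residues, of order $(q-1)/H$.

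Keeping $mn$ and $mnp$ below $q$ forces $m=n$ only for the full group $\cX_q$. Here a congruence $m\equiv gn \bmod q$ with $g\neq 1$ is perfectly compatible with $m,n<q$. If, say, a ratio $p_1/p_2$ of two primes in your support $\cP$ is an $H$-th power residue, then every pair $(p_1k,p_2k)$ collides. Nothing in the hypotheses rules this out. Heuristically, a resonator of length $x$ has about $x^2/H$ colliding pairs, which is enormous for $x=H^{1-\varepsilon}$. Even for $x=H^{1/2-\varepsilon}$ no particular $\cG$ is forced to avoid them, so your fallback has the same defect.

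``A subgroup meets an interval in few elements'' yields at best ``few'', not ``none''. You never bound the weighted mass of these terms in the denominator $S_1$ against the diagonal, so the lower bound for $S_2/S_1$ is not established. Separately, your weights $f(p)=L^{1-\sigma}/(p^{\sigma}\log p)$ on $[L\log L,L^c]$ give a diagonal ratio of order
\[
L^{1-\sigma}\sum_{p\ge L\log L}\frac{p^{-2\sigma}}{\log p}\asymp L^{2-3\sigma}(\log L)^{-1-2\sigma}.
\]
This is a power of $L$ short of the target $L^{1-\sigma}/\log L$ when $\sigma>1/2$; these weights are a naive extension of the $\sigma=1/2$ Bondarenko--Seip weights.

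The missing idea is that no off-diagonal analysis is needed at all, thanks to positivity. Take nonnegative \emph{completely multiplicative} weights $r_n$. Restrict the inner sum above to $p\mid n$ and write $n=p\ell$. Then $r_{p\ell}=r_pr_\ell$ and $gpm\equiv p\ell\iff gm\equiv \ell \bmod q$, so
\[
\sum_{\chi\in\cH}\chi(p)|R(\chi)|^2\ge r_p\sum_{\chi\in\cH}|R(\chi)|^2,
\]
with all off-diagonal terms included. Your squarefree product $\prod(1+\chi(p)f(p))$ does not allow this exact step. The paper uses $R(\chi)=\prod_{p\le Y}(1-\chi(p)/2)^{-1}$ with $Y=0.5\kappa\log H\log_2 H$, which is an infinite series. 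This gives the ratio $\sum_{p\le Y}\tfrac12p^{-\sigma}\gg(\log H)^{1-\sigma}(\log_2H)^{-\sigma}$.

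The length of $R$ enters only through the removed characters. We have $\max_\chi|R(\chi)|^2\le 2^{2\pi(Y)}=H^{\kappa\log2+o(1)}$ while $\sum_{\chi\in\cH}|R(\chi)|^2\ge H$. So one needs a power saving $\#\sfB_\sigma(\cH)\ll H^{1-\kappa}$, not merely $o(H)$. This comes from the $H$-dependent bound of Theorem~\ref{thm:mean-value-sub} once $H\ge q^{(2-2\sigma)/(2-\sigma)+\delta}$, which the stated hypothesis implies. It does not come from a count of the form $q^{(2-2\sigma_0)(2-\sigma_0)+o(1)}$.

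Under GRH only $\chi_0$ is removed. Its contribution $\approx(\log q)^{b(\sigma)}|R(\chi_0)|^2$ must be beaten by $H\sum_m r_m^2=H(4/3)^{\pi(Y)}$. This is where $H\ge(\log q)^{(1+\delta)b(\sigma)}$ is used. It is not needed to fit the resonator primes below $Y$, which is automatic since $H\le q$.
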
 

\begin{rmk}
The proof of Part~(i) of Theorem~\ref{th3} is based on a new zero density estimate provided by Theorem~\ref{thm:mean-value-sub} which is better than the trivial bound \eqref{eq: TrivBound} as soon as $H \geq q^{\frac{(2-2\sigma)}{2-\sigma}+\delta}$ for any $\delta> 0$. Note that the constants involved in lower bound  depend only on $\sigma$ and $\delta$ and could be computed effectively. To allow all values of $\sigma \in (1/2,1)$, one requires $H \geq q^{2/3+\delta}$ unconditionally.
If $H$ falls below this threshold, the admissible range of $\sigma$ shrinks from the left.
\end{rmk}

We now examine the behaviour at the critical point $s=1/2$.
 
\begin{thm}\label{th4} 
Assume that $q$ is a sufficiently large prime and let $H\mid q-1$ be even. Suppose that  $\cH$ is a subgroup of $\cX_q$  of order $H = \#\cH$. If 
$H\geq q^{1/2+\varepsilon}$  for some fixed $\varepsilon > 0$, then there exists a non-principal character $\chi \in \cH^+$ such that  
	\[
\left|L\(\tfrac{1}{2}, \chi\)\right|\gg \left(\frac{H}{q}\right)^{1/2}
	\exp\((\sqrt{2}+o(1))\sqrt{\tfrac{\(\log \(H/\sqrt{q}\)\)\log_3 H }{\log_2 H}}\),
	\]
	where
$\mathcal{H}^+$ is the subgroup of $\cX_q$ consisting of all  $H/2$ even characters  of order 
	 $2(q-1)/H$.  
	\end{thm}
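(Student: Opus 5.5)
We use the resonance method at $s=1/2$ in Soundararajan's form, adapted to the subgroup $\cH^+$ (as in the long-resonator versions of Aistleitner--Mahatab--Munsch--Peyrot). Let $\cU$ be the subgroup of $\F_q^*$ of index $H/2$ annihilated by $\cH^+$; note $\#\cU = 2(q-1)/H$ and $-1\in\cU$ because all characters in $\cH^+$ are even. Orthogonality gives
\[
\frac{2}{H}\sum_{\chi\in\cH^+}\chi(a)\overline{\chi}(b) = \mathbf 1\{a\equiv u b \Mod q \text{ for some } u\in \cU\}.
\]
Take a resonator $R(\chi)=\sum_{m\le N} r(m)\chi(m)$, with $r$ multiplicative and supported on squarefree $m$ whose prime factors lie in $(L^2, \exp((\log L)^2))$. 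Here $r(p)=L/(\sqrt p\log p)$ and $L=\sqrt{\log N\log_2 N}$, with $N$ a small power of $H/\sqrt q$. We compare
\[
S_1=\sum_{\chi\in\cH^+}L(\tfrac12,\chi)|R(\chi)|^2,\qquad S_2=\sum_{\chi\in\cH^+}|R(\chi)|^2 ,
\]
and use $\max_\chi|L(\tfrac12,\chi)|\ge |S_1-L(\tfrac12,\chi_0)|R(\chi_0)|^2|/S_2$. The principal character is removed at the end: its contribution $|R(\chi_0)|^2\zeta$-type term is bounded trivially by $(\sum r(m))^2$, and this is negligible once $N$ is small compared with $H$.

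\textbf{Step 1: the denominator.} Expanding $S_2$, we get $\frac H2\sum_{m,n\le N,\ m\equiv un}|r(m)r(n)|$. If $N^2<q$ then only $m=n$ survives modulo the $\pm$ identification. For larger $N$ we must control the off-diagonal pairs with $m\equiv un$ for $u\in\cU\setminus\{\pm1\}$. I would bound these by Cauchy--Schwarz together with the count of solutions of $m\equiv un \Mod q$ with $m,n\le N$; for a random-like subgroup this count is about $N^2\#\cU/q$. This yields $S_2\approx \frac H2\sum_m r(m)^2$.

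\textbf{Step 2: the numerator.} We use the approximate functional equation
\[
L(\tfrac12,\chi)=\sum_n \chi(n)n^{-1/2}V(n/\sqrt q)+\varepsilon_\chi\sum_n\overline\chi(n)n^{-1/2}V(n/\sqrt q),
\]
with $V$ effectively truncating at $n\le q^{1/2+o(1)}$. The first sum contributes
\[
\frac H2\sum_{m,n,k:\ km\equiv \pm u n}\frac{r(m)r(n)}{\sqrt k}V(k/\sqrt q)\ \ge\ \frac H2\sum_{km=n}\frac{r(m)r(n)}{\sqrt k},
\]
by positivity of all terms, provided $r\ge0$ and $V\ge0$. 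Dropping the other congruence solutions only helps, since they are nonnegative. The dual term carries the root numbers $\varepsilon_\chi$. Writing these via Gauss sums, averaging over $\cH^+$ gives a sum of the type $\sum_{a\in \text{coset}}e(\cdot/q)$, weighted by $r(m)r(n)k^{-1/2}$. Bounding this trivially, using $\#\cH^+\cdot q^{1/2}$ versus the main term, is where the condition $H\ge q^{1/2+\varepsilon}$ enters. The dual term is then $O(q^{1/2}\cdot q^{1/4}(\sum r(m))^2)$-ish after the $V$ truncation, which is acceptable when $N\le (H/\sqrt q)^{c}$.

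\textbf{Step 3: optimisation.} The ratio $S_1/S_2$ is the standard Euler product computation $\prod_p(1+r(p)p^{-1/2}/(1+r(p)^2))$. With the above choices this gives $\exp((\sqrt2+o(1))\sqrt{\log N\log_3 N/\log_2 N})$. The factor $(H/q)^{1/2}$ arises from the loss in the dual and off-diagonal terms, i.e.\ from normalising by $S_2$ when $N$ is constrained by $H/\sqrt q$.

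\textbf{Main obstacle.} I expect the hardest part to be Step 2's dual term together with the off-diagonal congruences in Step 1. Both require $N$ to be as large as $H/\sqrt q$ while keeping error terms below the diagonal, and that is exactly what forces both the threshold $H\ge q^{1/2+\varepsilon}$ and the loss $(H/q)^{1/2}$. To handle this, I would first try Weil/Gauss-sum bounds over cosets of $\cU$.
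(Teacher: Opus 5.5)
The decisive gap is in Step~3: the resonator you chose cannot produce the bound you claim. Take a short multiplicative resonator of Soundararajan type, with $r$ supported on squarefree $m\le N$, $r(p)=L/(\sqrt p\log p)$ for $L^2<p<\exp((\log L)^2)$, and $L=\sqrt{\log N\log_2 N}$. Its Euler product gives a gain of only $\exp\bigl((1+o(1))\sum_p r(p)p^{-1/2}\bigr)=\exp\bigl((1+o(1))\sqrt{\log N/\log_2 N}\bigr)$. There is no $\log_3$ factor, whatever $N$ you take.

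A bound $\exp\bigl((\sqrt2+o(1))\sqrt{\log h\log_3 h/\log_2 h}\bigr)$ is of Bondarenko--Seip type and needs a \emph{long} resonator built from G\'al sums. The paper takes the de la Bret\`eche--Tenenbaum set $\cM$ with $\#\cM\le h$, whose elements are far larger than $q$. It reduces $\cM$ modulo $q$ via $r(m)^2=\#\{n\in\cM:~n\equiv m \bmod q\}$ and works with the \emph{second} moment. For even $\chi$, Soundararajan's formula $|L(\tfrac12,\chi)|^2=2\sum_{k,\ell}\chi(k)\overline{\chi(\ell)}(k\ell)^{-1/2}W_0(\pi k\ell/q)$ has $W_0>0$. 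By positivity one can therefore discard every congruence solution except the exact relations $ik=j\ell$ with $i,j\in\cM$ and $k,\ell\le\sqrt q$. These yield the G\'al sum $\sum_{i,j\in\cM}(\gcd(i,j)/\lcm[i,j])^{1/2}\ge h\exp\bigl((2\sqrt2+o(1))\sqrt{\log h\log_3 h/\log_2 h}\bigr)$; the truncation $k,\ell\le\sqrt q$ is handled with the exponent-$1/3$ G\'al sum.

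Using $|L|^2$ also removes your dual term entirely, because the root numbers cancel. In your first-moment set-up, the trivial bound on the coset sums $\sum_{x\in a\cU}\exp(2\pi i x/q)$ gives a dual contribution $O(q^{3/4}(\sum_m r(m))^2)$. Since $(\sum_m r(m))^2\ge\sum_m r(m)^2$, and the main term is $Hq^{o(1)}\sum_m r(m)^2$, that crude treatment could only work for $H$ of size at least about $q^{3/4}$.

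Step~1 and your explanation of the factor $(H/q)^{1/2}$ also do not hold up. Even for $N^2<q$, the congruence $m\equiv un \Mod{q}$ has off-diagonal solutions whenever $\cU$ contains a ratio $m/n$ of integers up to $N$. A random-model count $N^2\#\cU/q$ is not available for an arbitrary subgroup.

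The paper avoids the issue by positivity and the full group: $\sum_{\chi\in\cH^+}|R(\chi)|^2\le\sum_{\chi\in\cX_q}|R(\chi)|^2=(q-1)\sum_m r(m)^2\le(q-1)h$. Against the numerator $\gg Hh\exp\bigl((2\sqrt2+o(1))\sqrt{\log h\log_3 h/\log_2 h}\bigr)$, this loses exactly $q/H$. After taking square roots, that loss is the factor $(H/q)^{1/2}$. If your Steps~1 and~2 worked as stated, no such loss would appear, so your account of it is inconsistent.

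Your instinct that the resonator size should be $H/\sqrt q$ is right, but the reason is the principal character. It enters through orthogonality with the full weight of the approximate formula. This gives an error $O(|R(\chi_0)|^2q^{1/2}\log q)=O(h^2q^{1/2}\log q)$, not just $O((\sum_m r(m))^2)$, and it must be dominated by $Hh\exp(\cdots)$. That forces $h=H/\sqrt q$. The hypothesis $H\ge q^{1/2+\varepsilon}$ then makes $h\ge q^{\varepsilon}$, so that $\log_2 h\sim\log_2 H$.
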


\begin{rmk} We can have a version of Theorem~\ref{th4} for smaller values of $H$. 
However, in order to 
to obtain a large value of Bondarenko--Seip type (see~\cite{BoSe}), that is, to guarantee that the exponential 
factor dominates $(H/q)^{1/2}$ in the bound of Theorem~\ref{th4}
it is necessary to ensure that  
\[
\frac{q}{H} \leq \exp \( 2(\sqrt{2}-c)\sqrt{\tfrac{(\log q)\log_3 q}{\log_2 q}}\),
\]  
for some $c < \sqrt{2}$. This leads to the size of the subgroup 
\[H>q\exp \left(-2(\sqrt{2}-c)\sqrt{\tfrac{(\log q)\log_3 q}{\log_2 q}}\right).
\]
A typical example is given by choosing $c=\sqrt{2}/2$  and thus
\begin{equation}
\label{eq:mconds}
H = q e^{-(\sqrt{2}+o(1))\sqrt{\tfrac{(\log q) \log_3 q}{\log_2 q}}}
=q^{1-(\sqrt{2}+o(1))\sqrt{\tfrac{\log_3 q}{(\log q) \log_2 q}}}
\end{equation}  
for which Theorem~\ref{th4} yields 
\[
\max_{\substack{\chi\in \mathcal{H}^+\\\chi\neq \chi_0}}\left|L\(\tfrac{1}{2}, \chi\)\right| \ge 
\exp\(\(\frac{\sqrt{2}}{2}+o(1)\)\sqrt{\tfrac{(\log q) \log_3 q}{\log_2 q}} \).
\]   
Furthermore, in the above framework, one can attain a bound of the same strength 
as that of Soundararajan~\cite{Sound} by selecting a slightly larger value of $d$ which in turn results in a smaller subgroup size, see~\cite{Erm} where this approach has been implemented. 
\end{rmk} 

Clearly, given a prime number $q$, subgroups of $\cX_q$ are determined by divisors of $q-1$. Quantitative estimates for the  prime number theorem in arithmetic progressions allow one to obtain infinite families of primes for which there exists subgroups of $\cX_q$ satisfying~\eqref{eq:mconds}, 
see also much more general results of Ford~\cite{Ford}.

\subsection{Zero density estimates over subgroups of characters}
\label{sec:zerodens}
We now present a new zero density estimate on average over subgroups which is used in the proof of Theorem~\ref{th3}, and 
 which we believe is of independent interest. Let $N(\sigma,T, \chi)$ denote the number of zeros of $L(s,\chi)$ such that $\Re s\geq \sigma$ and $|\Im s|\leq T$.

\begin{thm}
\label{thm:mean-value-sub} 
Assume that $q$ is a sufficiently large prime and let $\sigma > 1/2$ be fixed.  
Let $\cH$ be a subgroup of $\cX_q$ of order $\#\cH=H$. For any $T\le (\log{q})^{O(1)}$
\[
\sum_{\chi\in \cH}N(\sigma,T; \chi)\le  q^{o(1)}
 \begin{cases} H^{(7-6\sigma)/(6-4\sigma)}& \text{if } H \ge  q^{2/3},\\
 H^{(4-3\sigma)/(6-4\sigma)} q^{(1-\sigma)/(3-2\sigma)} & \text{if } H < q^{2/3}.
 \end{cases} 
\]
\end{thm}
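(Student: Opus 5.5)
We follow the classical zero-detection scheme of Montgomery, with the mean value over the full group $\cX_q$ replaced by the mean value over the subgroup $\cH$ from Theorem~\ref{thm:MeanVal}. For a subgroup the product set is $\cH\cH=\cH$, so Theorem~\ref{thm:MeanVal} applies directly. Equivalently, by orthogonality on $\cH$, the quantity $\sum_{\chi\in\cH}|\sum_{n\le N}a_n\chi(n)|^2$ equals $H$ times a sum over pairs $m,n\le N$ with $m\equiv gn \pmod q$, where $g$ runs over the subgroup $\cG\subseteq\F_q^*$ of order $(q-1)/H$ annihilated by $\cH$.

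\emph{Step 1: zero detection.} Take $X=q^{\varepsilon}$ and the mollifier $M_X(s,\chi)=\sum_{n\le X}\mu(n)\chi(n)n^{-s}$, and write $L(s,\chi)M_X(s,\chi)=1+\sum_{n>X}b_n\chi(n)n^{-s}$ with $b_n=\sum_{d\mid n,\,d\le X}\mu(d)$. Smooth with $e^{-n/Y}$ and shift the contour to $\Re s=1/2$. This is legitimate for non-principal $\chi$, and the principal character contributes only $O(\log q)$ zeros in the range $T\le(\log q)^{O(1)}$. Then each zero $\rho=\beta+i\gamma$ with $\beta\ge\sigma$, $|\gamma|\le T$ forces one of two things: a dyadic piece $\sum_{U<n\le 2U}b_n\chi(n)n^{-\rho}$ with $X\le U\le Y(\log q)^2$ has size $\gg(\log q)^{-1}$, or $|L(1/2+it,\chi)M_X(1/2+it,\chi)|\gg Y^{\beta-1/2}$ for some $t$ near $\gamma$. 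After partial summation and selecting $O(\log q)$ dyadic ranges and a well-spaced subset of zeros (losing $q^{o(1)}$, since $T$ is polylogarithmic), it suffices to bound $R$, the number of pairs $(\chi,t_r)$ with $\chi\in\cH$ and well-spaced $t_r$, at which a Dirichlet polynomial $\sum_{U<n\le 2U}a_n\chi(n)n^{-it}$, with $|a_n|\le q^{o(1)}U^{-\sigma}$, exceeds $V\asymp q^{-o(1)}$.

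\emph{Step 2: large values over $\cH$.} For these large values we use the Hal\'asz--Montgomery duality argument over the subgroup. Squaring and applying Cauchy--Schwarz gives
\[
R^2V^2\ll G\sum_{r,r'}\Bigl|\sum_{n\sim U}\chi_r\overline{\chi_{r'}}(n)n^{-i(t_r-t_{r'})}e^{-n/U}\Bigr|,\qquad G=\sum|a_n|^2\ll U^{1-2\sigma}q^{o(1)}.
\]
Since $\chi_r\overline{\chi_{r'}}\in\cH$, the diagonal $r=r'$ gives $RU$. For the off-diagonal terms there are two options: bound the sums by P\'olya--Vinogradov/Burgess-type inputs, or, better, apply Theorem~\ref{thm:MeanVal} to a $k$-th power of the polynomial after H\"older. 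We also keep the direct bound from Theorem~\ref{thm:MeanVal}:
\[
R\le V^{-2}\sum_{\chi\in\cH}\Bigl|\sum_{n\sim U}a_n\chi(n)\Bigr|^2\ll q^{o(1)}U^{-2\sigma}\bigl(\text{mean value of Theorem~\ref{thm:MeanVal} at length } U\bigr).
\]
We raise the polynomial to a power $k$ so that its length $U^k$ lands in the optimal range relative to $H$ and $q$.

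\emph{Step 3: optimisation.} We combine the large-values bound (effective for small $U$) with the mean-value bound (effective for large $U$), and treat the $L(1/2+it)M_X$ term by the fourth-power mean over $\cH$, which is again controlled by Theorem~\ref{thm:MeanVal} applied to $L^2M_X$ via an approximate functional equation of length $\approx q$. Choosing $Y$ and the crossover point between the two bounds should give exponents of the shape $(7-6\sigma)/(6-4\sigma)$ and $(4-3\sigma)/(6-4\sigma)$. The case split $H\gtrless q^{2/3}$ should come from which term dominates in Theorem~\ref{thm:MeanVal}, i.e.\ whether the diagonal count $N^2/H$-type term or the $q$-dependent term is larger. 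A consistency check: at $\sigma=1/2$ both cases collapse to the trivial bound, namely $H$ in the first case and $H^{5/8}q^{1/8}$ in the second, where the latter is weaker than $H$ precisely when $H\ge q^{1/3}$.

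The main obstacle is Step 2. The Hal\'asz argument needs control of $\sum_{n\sim U}\psi(n)n^{-i\tau}$ for $\psi\in\cH$ non-principal, uniformly, and because $\cH$ is thin one cannot simply use the full-group large sieve. I would first try to avoid Hal\'asz altogether and run a pure mean-value argument: apply H\"older with an exponent $k$ chosen so that $U^k\approx q^{\theta}$ with $\theta$ optimal for Theorem~\ref{thm:MeanVal}. Only if that fails to give the stated exponents would I fall back on Burgess bounds for the off-diagonal terms.
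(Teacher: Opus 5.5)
\textbf{Gap: the zeros detected by $L(\tfrac12+it,\chi)M_X$.} This is where the proposal breaks. Since Step~3 is only asserted ("should give"), it is a genuine gap. A fourth moment over $\cH$ cannot give the stated exponents. The approximate functional equation for $L^2$ has length $N\approx q$, and at that length the term $A^{-1/2}N$ of Theorem~\ref{thm:MeanVal} dominates. So all you recover is $\sum_{\chi\in\cH}|L(\tfrac12+it,\chi)|^2\le (Hq)^{1/2+o(1)}$. That is exactly what Cauchy--Schwarz and the full-group bound $\sum_{\chi\in\cX_q}|L(\tfrac12+it,\chi)|^4\le q^{1+o(1)}$ already give, with no saving from $\cH$.

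The resulting $R_2\le q^{1+o(1)}Y^{2-4\sigma}$ is too weak. For $\sigma$ near $1/2$ the target is close to $H$ (resp.\ $H^{5/8}q^{1/4}$). As soon as $H\le q^{1-\eta}$, reaching it requires $Y\ge q^{c/(2\sigma-1)}$ for some $c=c(\eta)>0$, and then the type-I contribution $Y^{2-2\sigma}H^{1/2}$ is far too large.

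The missing idea is the \emph{first} moment of $L$ itself at length $q^{1/2+\varepsilon}$:
\begin{itemize}
\item By Lemma~\ref{lem:approx}, $|L(\tfrac12+it,\chi)|$ is bounded by a Dirichlet polynomial of length $q^{1/2+\varepsilon}$ whose coefficients depend only on $t$ and $\chi(-1)$.
\item Split $\cH=\cH^+\cup\cH^-$ by parity; each part still satisfies $\#(\cH^{\pm}\cdot\cH^{\pm})\ll\#\cH^{\pm}$.
\item Enlarge the $u$-integral to $|u|\le C(\log q)^C$ to remove the dependence on $\gamma$, and bound $\#\cR_2(\chi)\le q^{o(1)}$ trivially.
\item Apply Theorem~\ref{thm:MeanVal} dyadically with $N\le q^{1/2+\varepsilon}$.
\end{itemize}
This gives $R_2\le (H+H^{5/8}q^{1/4})Y^{1/2-\sigma}q^{o(1)}$.

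\textbf{Type-I zeros and the optimisation.} Your fallback is the right route, and Hal\'asz is not needed at all. Since $T\le(\log q)^{O(1)}$, each $\chi$ has only $q^{o(1)}$ zeros in the box, so no spacing or large-values argument is required.
\begin{itemize}
\item Apply H\"older with $k$ chosen so that $Y^{4/3}\le N^k\le Y^{2+\varepsilon}$. The window is dictated by $Y$: the lower end makes the $N^{k(1/2-\sigma)}$ terms dominated by the $R_2$ bound, and the upper end controls $H^{1/2}N^{k(1-\sigma)}$.
\item Strip $n^{-\rho}$ by partial summation and apply Theorem~\ref{thm:MeanVal} to $N^{-k\sigma}\sum_{\chi\in\cH}|\sum_n\widetilde A_n\chi(n)|$. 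This gives $R_1\le q^{o(1)}\bigl(Y^{2(1-\sigma)}H^{1/2}+(H+H^{5/8}q^{1/4})Y^{-4(\sigma-1/2)/3}\bigr)$.
\item Balance $Y^{2-2\sigma}H^{1/2}$ against $(H+H^{5/8}q^{1/4})Y^{1/2-\sigma}$. This gives $Y=(H+H^{1/4}q^{1/2})^{1/(3-2\sigma)}$ and exactly the two stated exponents.
\end{itemize}
The case split is $H$ versus $H^{1/4}q^{1/2}$, i.e.\ which of the terms $N^{1/2}$ and $A^{-3/8}N^{1/2}q^{1/4}$ of Theorem~\ref{thm:MeanVal} dominates, not an $N^2/H$-type term.

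\textbf{A slip in your sanity check.} At $\sigma=1/2$ the second case is $H^{5/8}q^{1/4}$, not $H^{5/8}q^{1/8}$. It exceeds $H$ precisely when $H<q^{2/3}$, which is consistent with the case split.
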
 

We recall the classical bound  
\begin{equation}  
 \label{eq: N(1,T)}
N(T; \chi)= N(0,T; \chi)\ll T \log(qT)+\log{q}
\end{equation}  
see, for example~\cite[Chapter~16]{Dav}  or~\cite[Theorem~5.8]{IwKow}. Hence we trivially have 
\begin{equation}  
 \label{eq: TrivBound}
\sum_{\chi\in \cH}N(\sigma,T; \chi)\le H  q^{o(1)} 
\end{equation}
in the range of $T$ of Theorem~\ref{thm:mean-value-sub}. On the other hand, one easily verifies 
that 
\[
\frac{7-6\sigma}{6-4\sigma} < 1
\]
for $\sigma > 1/2$. Hence, if $H \ge  q^{2/3}$ then Theorem~\ref{thm:mean-value-sub} is 
nontrivial for any fixed $\sigma>1/2$. A similar computation shows that for $q^{2/3} >  H \ge q^\eta$
with some fixed $\eta>0$,
Theorem~\ref{thm:mean-value-sub} is  stronger than~\eqref{eq: TrivBound} as long as 
\[
\sigma > \frac{2-2\eta}{2-\eta}.
\]

The proof of Theorem~\ref{thm:mean-value-sub} is based on a new bound on some mean values 
of character sums given below which builds on an estimate of Heath-Brown~\cite[Theorem~1]{H-B}.

\subsection{Mean values of character sum over sets of characters with small multiplicative doubling}
\label{sec:meanvalchar}
For an integer $N$ and a set $\cA\subseteq \cX_q$ of cardinality $A = \# \cA$, we consider the mean value 
\[
M(\cA, N) =\frac{1}{A} \sum_{\chi \in \cA} \left| \sum_{n=1}^N \chi(n)\right|. 
\]

We are mostly interested in  sets $\cA$ with small multiplicative doubling, that is with 
\begin{equation}\label{eq:SmallDoubl}  
\# \(\cA\cdot \cA\) \le K \#\cA
\end{equation}  
for some parameter $K$, where $\cA\cdot \cA   = \{\chi_1\cdot \chi_2:~\chi_1,  \chi_2\in \cA\}$.  A natural example comes from subgroups of $\cG \subseteq \cX_q$. Another specific example of such sets is given by  sets of ``consecutive'' characters, 
or {\it intervals\/} of length $A$, which are sets of the form
\[
\cI = \{\chi^a:~a =1, \ldots, A\}
\]
for some fixed character $\chi \in \cX_q$ and an integer $A \ge 1$. 

Moreover, we recall that  in the case of intervals  $\cI$ of size $A$
the mean value $M(\cI, N)$ has been estimated 
by Montgomery~\cite[Lemma~5]{Mont2} as
\begin{equation}\label{eq:MontBound}  
M(\cI, N) \le \(A^{-1/4} N + A^{-1/8} q^{1/4} +    q^{1/8} + Nq^{-1/8}\) q^{o(1)}, 
\end{equation}  
provided $N \le (q/2)^{1/2}$. 

Here we show that a result of Heath-Brown~\cite[Theorem~1]{H-B}  leads to an improvement 
of~\eqref{eq:MontBound}. In fact, it can be combined with  {\it Green--Ruzsa covering lemma\/}
(in a form given by~\cite[Lemma~2.17]{TaoVu}) to apply to arbitrary sets
with small multiplicative doubling as in~\eqref{eq:SmallDoubl}.

We state the following improvement of~\eqref{eq:MontBound} in greater generality. To do so, we  introduce complex coefficients $\balpha = (\alpha_n)_{n =1}^N$ 
in the character sums and define 
\[
M(\balpha, \cA, N) = \frac{1}{A}  \sum_{\chi \in \cA} \left| \sum_{n=1}^N \alpha_n \chi(n)\right|, 
\]
where, as before,  $A = \#\cA$. 

\begin{thm}
	\label{thm:MeanVal} Assume that $q$ is a sufficiently large prime. 
	Let  $\cA  \subseteq \cX_q$ be a set of multiplicative characters of cardinality $A = \#\cA$ 
	 and satisfying $ \# \(\cA\cdot \cA\) \le K A$ for 
	some $K \ge 1$. Then, for a positive integer $N \le q$ and complex coefficients $|\alpha_n| \le 1$, $1 \le n \le N$, we have   
	\[
	M(\balpha, \cA, N) \le K^{1/2} \(N^{1/2}  + A^{-1/2} N+  A^{-3/8} N^{1/2}  q^{1/4}\) q^{o(1)}.
	\]
\end{thm}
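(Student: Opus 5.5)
We reduce the first moment to a second moment by Cauchy--Schwarz,
\[
M(\balpha,\cA,N)\le \Bigl(\frac1A\sum_{\chi\in\cA}\Bigl|\sum_{n=1}^N\alpha_n\chi(n)\Bigr|^2\Bigr)^{1/2}.
\]
It then suffices to show
\begin{equation*}
\sum_{\chi\in\cA}\Bigl|\sum_{n=1}^N\alpha_n\chi(n)\Bigr|^2\le K\bigl(AN+N^2+A^{1/4}Nq^{1/2}\bigr)q^{o(1)}.
\end{equation*}
Squaring out directly is bad because the coefficients $\alpha_n$ are arbitrary. So we use duality. Write the left side as $\sum_{\chi\in\cA} b_\chi\sum_n\alpha_n\chi(n)$ with $|b_\chi|^2$ summing to the left side itself. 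Interchanging sums and applying Cauchy--Schwarz over $n\le N$ gives
\[
\Bigl(\sum_{\chi\in\cA}\Bigl|\sum_n\alpha_n\chi(n)\Bigr|^2\Bigr)^{2}\le N\sum_{\chi,\psi\in\cA} b_\chi\overline{b_\psi}\sum_{n\le N}\chi\overline\psi(n).
\]
The diagonal $\chi=\psi$ contributes $N$ times $\sum|b_\chi|^2$, which yields the term $AN$ after dividing out. We bound $|b_\chi\overline{b_\psi}|\le (|b_\chi|^2+|b_\psi|^2)/2$. The off-diagonal part is then controlled by $\sum|b_\chi|^2$ times
\[
\max_{\chi\in\cA}\sum_{\psi\in\cA,\,\psi\ne\chi}\Bigl|\sum_{n\le N}\chi\overline\psi(n)\Bigr|.
\]
This is a sum of $|\sum_{n\le N}\eta(n)|$ over at most $A$ distinct nontrivial characters $\eta\in\chi\cA^{-1}$. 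We expect this to give the $N^2$ and $A^{1/4}Nq^{1/2}$ terms.

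The key input is Heath-Brown's estimate~\cite[Theorem~1]{H-B}, which controls $\sum_{\eta\in\cS}|\sum_{n\le N}\eta(n)|$ (or its square) for an arbitrary set $\cS$ of nonprincipal characters modulo the prime $q$. Applied with $\cS=\chi\cA^{-1}$ and $\#\cS\le A$, it should bound the off-diagonal sum by roughly $(N+A^{1/4}q^{1/2})q^{o(1)}$ times a power of $A$. Inserting this, and separating the trivial character, yields the three-term bound. If Heath-Brown's result is phrased for sets of the form $\cB\cdot\cB^{-1}$, or in a second-moment form, we proceed differently. We apply Cauchy--Schwarz once more to get a sum of $r(\eta)|\sum_n\eta(n)|$ over $\eta\in\cA\cdot\cA^{-1}$, where $r(\eta)\le A$ counts representations $\eta=\chi\overline\psi$.

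This is where the factor $K$ and the small doubling hypothesis enter. By the Green--Ruzsa covering lemma in the form~\cite[Lemma~2.17]{TaoVu}, the condition $\#(\cA\cdot\cA)\le KA$ implies that $\cA\cdot\cA^{-1}$ has size $K^{O(1)}A$ (Pl\"unnecke--Ruzsa). Moreover, $\cA$ is covered by $O(K)$ translates $\theta_i\cB$ of a set $\cB\subseteq\cA\cdot\cA^{-1}$ with good additive structure. We split the sum over $\cA$ into these $O(K)$ pieces. On each piece the character sums twisted by $\theta_i$ have the same moduli. Summing the pieces by Cauchy--Schwarz costs a factor $K$ in the second moment, hence the $K^{1/2}$ in the final bound.

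I expect the main obstacle to be this step: arranging the covering so that only a factor $K$ (not a larger power) is lost, while still landing exactly in the shape needed for Heath-Brown's theorem. It is also delicate to track the powers of $A$ so that they produce precisely $A^{-3/8}$ after the final square root. The restriction $N\le q$ and primality of $q$ are needed only to apply~\cite{H-B} and to ensure that $\sum_{n\le N}\eta(n)$ is not degenerate for nonprincipal $\eta$.
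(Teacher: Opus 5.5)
There is a genuine gap, and it is in the duality step, before any additive combinatorics enters. In your dual inequality the diagonal $\chi=\psi$ contributes $N\cdot N\sum_{\chi}|b_\chi|^2$. That is the term $N^2$, not $AN$. So the $AN$ term would have to come from the off-diagonal part, and you would need
\[
\max_{\chi\in\cA}\sum_{\psi\in\cA,\,\psi\ne\chi}\Bigl|\sum_{n\le N}\chi\overline{\psi}(n)\Bigr|\le \bigl(A+N+A^{1/4}q^{1/2}\bigr)q^{o(1)} .
\]
Putting absolute values on the Gram entries discards exactly the cancellation that makes the $AN$ term true, and the bound above is false. Take $\cA=\cX_q$ (so $K=1$) and $N\le q^{1/2}$. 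The row sum is then $\sum_{\eta\ne\chi_0}|\sum_{n\le N}\eta(n)|$. Comparing the second moment $\asymp qN$ with the fourth moment $\ll qN^2\log N$ via H\"older shows this row sum is $\gg qN^{1/2}(\log N)^{-1/2}$. Your route therefore gives at best about $qN^{3/2}$, for a quantity that is at most $qN$.

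The same example shows that no first-moment bound of the shape you attribute to Heath-Brown can hold for an arbitrary set $\cS$ of characters. His Theorem~1, in the form of Lemma~\ref{lem:H-B}, is a second-moment bound for the double family $\chi_r\overline{\chi_s}$ counted with multiplicity, with arbitrary bounded coefficients. Your fallback (a further Cauchy--Schwarz, Pl\"unnecke--Ruzsa, covering $\cA$ by translates of $\cA\cdot\cA^{-1}$) still takes absolute values of off-diagonal terms. It would lose $K^{O(1)}$ rather than $K$, and, as you say yourself, it is not carried out.

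The missing idea is that no duality is needed. Use the multiplicity clause of the Green--Ruzsa covering lemma (Lemma~\ref{lem:G-R Cover}): there is a set $\cU$ with $\#\cU\le 2K-1$ such that every $\chi\in\cA$ has at least $A/2$ representations $\chi=\chi_1\overline{\chi_2}\eta$ with $\chi_1,\chi_2\in\cA$ and $\eta\in\cU$. Distinct $\chi$ come from distinct triples and all terms are nonnegative, so
\[
\frac{A}{2}\sum_{\chi\in\cA}\Bigl|\sum_{n=1}^N\alpha_n\chi(n)\Bigr|^2\le\sum_{\eta\in\cU}\ \sum_{\chi_1,\chi_2\in\cA}\Bigl|\sum_{n=1}^N\alpha_n\eta(n)\,\chi_1(n)\overline{\chi_2}(n)\Bigr|^2 .
\]
For each $\eta$, apply Lemma~\ref{lem:H-B} with $R=A$ and coefficients $\alpha_n\eta(n)$, which still have modulus at most $1$. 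This gives $\sum_{\chi\in\cA}|\sum_n\alpha_n\chi(n)|^2\le K\bigl(N^2+AN+A^{1/4}Nq^{1/2}\bigr)q^{o(1)}$. The single factor $K$ comes from $\#\cU$. The gain of $A/2$ from the multiplicity is what turns Heath-Brown's $R^{5/4}$ into $A^{1/4}$, and hence into $A^{-3/8}$ after your first Cauchy--Schwarz step.
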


We observe that for either $A> q^{2/3}$ or $N\ge q^{2/3}$ the bound of Theorem~\ref{thm:MeanVal} simplifies to
\[
M(\balpha, \cA, N) \le K^{1/2} \(N^{1/2}  + A^{-1/2} N\) q^{o(1)}.
\]

The bound~\eqref{eq:MontBound}   has been used first in the same work of Montgomery~\cite[Theorem~2]{Mont2}
and then also  by Rudnick and Zaharescu~\cite[Theorem~3(i)]{RuZa}  for  a question 
related to the distribution of gaps between the powers of primitive roots.

Substituting this new bound of Theorem~\ref{thm:MeanVal}  in the arguments of Montgomery~\cite{Mont2}
and of Rudnick and Zaharescu~\cite{RuZa}, we obtain stronger versions 
of~\cite[Theorem~2]{Mont2} and~\cite[Theorem~3(i)]{RuZa}. Our improvement 
makes~\cite[Theorem~3(i)]{RuZa} unconditional by removing the GRH assumption, see Section~\ref{sec:Appl} for details.

\begin{rmk} It is easy  to see that by the Cauchy--Schwarz 
inequality and the orthogonality 
relation between characters from $\cG$, the question of estimating $M(\cG, N)$ can be reduced to counting 
solutions to the congruence $m \equiv \gamma n \bmod q$ with integers $1 \le m,n \le N$ 
and elements $\gamma\in \Gamma$ from some subgroup $\Gamma \subseteq \Z_q^*$ 
(which is dual to $\cG$ and thus is of order $\# \Gamma = \varphi(q)/\# \cG$).  An upper bound on 
the number of solutions to such congruences is given in~\cite[Theorem~1]{BKS}, 
see also~\cite[Theorem~1]{CillGar} and~\cite[Theorem~2]{Shp}. Perhaps in some cases, especially for very small subgroups this approach can lead to a better result, but it is not clear whether these results lead to any interesting applications to $L$-functions.
\end{rmk}

\section{Preliminaries}

 \subsection{Double sums of character sums}

 The following bound is a simplified version of~\cite[Theorem~1]{H-B} (below we appeal to the notation of~\cite{H-B}). We note that in 
 Lemma~\ref{lem:H-B} below all characters are distinct and of the same conductor $q$. This 
  allows us to choose $t_r = 0$ (which is an admissible choice in this case)  and thus $T =1$, $Q = q_0 = q$, 
 hence $D =q$. Besides we choose the weights $a_n = n^{1/2}$, $1 \le n \le N$, 
 (in the notation of~\cite{H-B}). We can  now record the corresponding 
 version of~\cite[Theorem~1]{H-B} as follows.

 \begin{lemma}
 	\label{lem:H-B}
 	Let $\chi_r$, $r = 1, \ldots, R$, be $R\ge 1$ distinct multiplicative characters of $\F_q$. 
 	Then, for a positive integer $N \le q$ and complex coefficients $|\alpha_n| \le 1$, $1 \le n \le N$, we have   
 	\[
 	\sum_{r,s =1}^R  \biggl| \sum_{n=1}^N \alpha_n \chi_r(n)\overline{\chi_s}(n)\biggr|^2
 	\le \(N^2R + NR^2 + NR^{5/4} q^{1/2}\) q^{o(1)}. 
 	\]
 \end{lemma}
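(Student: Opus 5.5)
The plan is to read the lemma off directly from Heath-Brown's large-sieve-type bound~\cite[Theorem~1]{H-B}, so the work consists of choosing the parameters there and checking that the hypotheses hold. As I recall it, that theorem treats characters $\chi_r$ of conductor $q_r \le Q$ together with real points $t_r$, $|t_r|\le T$, and coefficients $a_n$. It bounds
\[
\sum_{r,s\le R}\Bigl|\sum_{n\le N} a_n\,\chi_r\overline{\chi_s}(n)\, n^{i(t_r-t_s)}\Bigr|^2
\]
by a quantity of shape $(NR + R^2 + R^{5/4}D^{1/2})\,G\,(\log DN)^{O(1)}$. Here $D$ is built from $Q$, $T$ and $q_0=\operatorname{lcm}$ of the moduli, and $G$ is a weighted $\ell^2$-norm of the coefficients, normalised so that $G=\sum_n |a_n|^2 n^{-1}$ when the inner sum carries the factor $n^{-1/2}$. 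I expect the exact normalisation of the weights to be the only place where care is needed, and I will fix it first.

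\emph{Choosing the parameters.} All $\chi_r$ are characters of $\F_q$ with $q$ prime. The non-principal ones therefore have conductor $q$ and are primitive, and the principal character (if present) is handled trivially below. I take $t_r=0$ for every $r$; this is admissible because the pairs $(\chi_r,t_r)$ are still distinct, the characters being distinct, which is the well-spacing condition Heath-Brown requires. Then $T=1$, $Q=q_0=q$, and hence $D=q$ up to an absolute constant.

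\emph{Absorbing the weights.} I put $\alpha_n$ into the coefficients by writing the sum in Heath-Brown's form with coefficient $\alpha_n n^{1/2}$, that is, with the weight $n^{1/2}$ mentioned before the statement, so that after the factor $n^{-1/2}$ the inner sum is exactly $\sum_n \alpha_n\chi_r\overline{\chi_s}(n)$. The norm then becomes $G=\sum_{n\le N}|\alpha_n|^2 n\,n^{-1}\le N$. Since $N\le q$, all logarithmic factors are $q^{o(1)}$. Substituting gives
\[
\bigl(NR+R^2+R^{5/4}q^{1/2}\bigr)\,N\,q^{o(1)} = \bigl(N^2R+NR^2+NR^{5/4}q^{1/2}\bigr)q^{o(1)},
\]
which is the claim.

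\emph{Checking the degenerate terms.} The diagonal terms $r=s$, where $\chi_r\overline{\chi_s}$ is principal, each contribute at most $N^2$ trivially, for a total of $RN^2$, which is already on the right-hand side. I will check that Heath-Brown's statement includes these terms or permits this separate treatment, and likewise for any pairs involving the principal character. The main obstacle is thus purely bookkeeping: confirming that the normalisation of $a_n$ and of $G$ in~\cite{H-B} matches the choice $a_n=\alpha_n n^{1/2}$, so that the factor comes out as $N$ rather than $N\log N$ or $N^2$. Either of the latter would still be acceptable up to $q^{o(1)}$, provided it is not a genuine extra power of $N$.
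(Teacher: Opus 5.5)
Your proposal is correct and is the paper's argument: the lemma is Heath-Brown's Theorem~1 specialised to $t_r=0$, $T=1$, $Q=q_0=q$ (so $D=q$), with weights $n^{1/2}$ absorbing the $n^{-1/2}$ normalisation, so that the coefficient norm is at most $N$ and the logarithmic losses are $q^{o(1)}$ because $N\le q$. Your separate trivial treatment of the at most $2R$ pairs involving the principal character (each at most $N^2$) is a harmless precaution that the paper glosses over. Contrary to your last remark, however, a factor $N^2$ in place of $N$ in the norm would not be acceptable.
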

 
 \subsection{Green--Rusza covering lemma}
 
 We need the following special case (of equal sets $\cA = \cB \subseteq \cX_q$) provided by Tao and Vu~\cite[Lemma~2.17]{TaoVu} and attributed to Green and Rusza~\cite{GrRu}.

 \begin{lemma}
 	\label{lem:G-R Cover}
 	Let  $\cA  \subseteq \cX_q$ be a set 
 	of multiplicative characters  of cardinality $A = \#\cA$ with $ \# \(\cA\cdot \cA\) \le K A$ for 
 	some $K \ge 1$. Then there exists 
 	a set $\cU  \subseteq \cX_q$ of cardinality $\# \cU \le 2K-1$ such that 
 	any  character  $\chi\in \cA$ has at least $A/2$ representations as 
 	\[
 	\chi(n) = \chi_1(n) \overline\chi_2(n) \eta(n) , \qquad \chi_1, \chi_2\in \cA, \  \eta \in \cU.
 	\]
 \end{lemma}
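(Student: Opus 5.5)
The plan is to reduce the statement to a covering problem for the translates $\eta\cA$ inside $\cA\cdot\cA$, and then to take $\cU$ to be a maximal family of such translates that overlap little.

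\textbf{Reformulation.} For characters $\chi,\eta$, the number of representations
\[
\chi=\chi_1\overline{\chi}_2\eta,\qquad \chi_1,\chi_2\in\cA,
\]
equals the number of pairs with $\chi\chi_2=\eta\chi_1$. This is exactly $\#(\chi\cA\cap\eta\cA)$. So it suffices to find $\cU$ with $\#\cU\le 2K-1$ such that every $\chi\in\cA$ has some $\eta\in\cU$ with
\[
\#(\chi\cA\cap\eta\cA)\ge A/2 .
\]
A useful fact throughout is that for $\eta\in\cA$ the translate $\eta\cA$ lies in $\cA\cdot\cA$, a set of size at most $KA$.

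\textbf{Construction.} Take $\cU\subseteq\cA$ maximal subject to $\#(\eta\cA\cap\eta'\cA)<A/2$ for all distinct $\eta,\eta'\in\cU$. The covering property then follows directly from maximality. If $\chi\in\cU$, the choice $\eta=\chi$ gives $A$ representations. Otherwise $\cU\cup\{\chi\}$ violates the defining condition, so some $\eta\in\cU$ has $\#(\chi\cA\cap\eta\cA)\ge A/2$.

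\textbf{Bounding $\#\cU$.} Write $m=\#\cU$. All $m$ translates $\eta\cA$, $\eta\in\cU$, lie in a set of size at most $KA$ and have small pairwise overlaps. My first attempt is to set $f=\sum_{\eta\in\cU}\mathbf 1_{\eta\cA}$, which is supported on a set of size at most $KA$. Then $\sum f=mA$, while
\[
\sum f^2\le mA+m(m-1)\frac{A}{2},
\]
and Cauchy--Schwarz gives $m\le K\bigl(1+\tfrac{m-1}{2}\bigr)$.

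This inequality is useless once $K\ge 2$, so this step is the main obstacle: the naive half-overlap threshold does not give a bound of order $K$. I would therefore follow Tao--Vu~\cite[Lemma~2.17]{TaoVu} more closely and run a greedy selection instead of a pairwise condition. At each step I would add the $\eta\in\cA$ whose translate $\eta\cA$ meets the current union $\bigcup_{\eta'\in\cU}\eta'\cA$ in fewer than $A/2$ elements. Each accepted $\eta$ then contributes more than $A/2$ new elements of $\cA\cdot\cA$. Since the first translate contributes $A$ and the union has size at most $KA$, this gives $A+(m-1)A/2<KA+A/2$ up to the bookkeeping of strict versus non-strict inequalities, and hence $m\le 2K-1$.

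The remaining difficulty is the covering step for this greedy family. At termination every $\chi\in\cA$ has
\[
\#\Bigl(\chi\cA\cap\bigcup_{\eta\in\cU}\eta\cA\Bigr)\ge A/2,
\]
but this large intersection is spread over several $\eta$. Converting it into $A/2$ representations with a single $\eta$ is the point where the precise form of the Tao--Vu argument, or a slight weakening of the constant $1/2$ or of $2K-1$, is needed. Any constant multiple of $K$ and any fixed positive proportion of $A$ would suffice for the application in Theorem~\ref{thm:MeanVal}.
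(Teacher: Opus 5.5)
Your greedy construction is the Green--Ruzsa argument behind \cite[Lemma~2.17]{TaoVu}, which the paper quotes without proof, and it already proves the lemma as stated. The ``remaining difficulty'' comes only from reading the statement as asking for a single $\eta$.

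A representation of $\chi$ is a triple $(\chi_1,\chi_2,\eta)\in\cA\times\cA\times\cU$. So by your own bijection the number of representations of $\chi$ is
\[
\sum_{\eta\in\cU}\#(\chi\cA\cap\eta\cA)\ \ge\ \#\Bigl(\chi\cA\cap\bigcup_{\eta\in\cU}\eta\cA\Bigr)\ \ge\ \frac{A}{2},
\]
and the last inequality is exactly your termination condition.

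This triple count is also all that the proof of Theorem~\ref{thm:MeanVal} uses. With $S(\chi)=\sum_{n\le N}\alpha_n\chi(n)$, that proof needs
\[
\frac{A}{2}\sum_{\chi\in\cA}|S(\chi)|^2\le\sum_{\chi_1,\chi_2\in\cA}\sum_{\eta\in\cU}|S(\chi_1\overline{\chi}_2\eta)|^2,
\]
which sums over every $\eta\in\cU$.

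For the size bound, the clean accounting is this: if $m\ge 2$ then $KA\ge\#\bigcup_{\eta\in\cU}\eta\cA>A+(m-1)A/2$, whence $m<2K-1$. Your inequality with $KA+A/2$ on the right only gives $m<2K$, which is not enough when $2K\notin\Z$.

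Do not try to repair the single-$\eta$ version: it is false in general. Take in the cyclic group $\cX_q$ a Freiman-isomorphic copy of the box $\{0,\dots,N-1\}^d$. For example, use the characters $\psi^{\sum_i a_i(4N)^{i-1}}$ with $0\le a_i<N$, for a generator $\psi$ and $q$ large. Then $K<2^d$.

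The character $\chi_1\overline{\chi}_2$ corresponding to $v\in\Z^d$ has $\prod_i(N-|v_i|)$ representations. Since $1-x\le e^{-x}$, this is at least $A/2$ only if $\|v\|_1\le N\log 2$. So the set $P$ of characters with at least $A/2$ representations satisfies $\#P\le(1+o(1))(2N\log 2)^d/d!$.

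The single-$\eta$ property forces $\cA\subseteq\bigcup_{\eta\in\cU}\eta P$. Hence $\#\cU\ge A/\#P\ge(1+o(1))\,d!/(2\log 2)^d$ as $N\to\infty$, which already exceeds $2^{d+1}>2K-1$ for $d=8$. With $\varepsilon A$ in place of $A/2$, the same computation gives $d!/(2\log(1/\varepsilon))^d$, which beats any fixed power of $K\approx 2^d$. So neither weakening you suggest rescues that route, and none is needed.
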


\subsection{Approximation and bounds for  some finite Euler products and    $L$-functions}

The following is an effective version of~\cite[Equation~(2.13)]{AMMP}.
\begin{lemma}
\label{lem:q2} 
For $X\gg 1$ and $p\le X$, let 
\[r_p=\left(1-\frac{p}{X}\right).\]
Then 
\[\log \prod_{p\le X}\left(1-r_p^2 \right)^{-1} =(2-\log{4})\frac{X}{\log X}+O\left( \frac{X  \log_2 X }{(\log X)^2}\right).
\]
\end{lemma}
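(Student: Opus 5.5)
We write $\log(1-r_p^2)^{-1} = \sum_{k\ge1} r_p^{2k}/k$ and evaluate the sum over primes with the prime number theorem. Put $t = p/X\in(0,1]$, so $r_p = 1-t$ and $1-r_p^2 = t(2-t)$. The quantity to evaluate is
\[
S(X)=\sum_{p\le X} f(p/X), \qquad f(t) = -\log\bigl(t(2-t)\bigr),
\]
where $f$ is smooth on $(0,1]$, vanishes at $t=1$, and has only a logarithmic singularity at $t=0$. Writing the sum as a Stieltjes integral against $\pi(u)$ gives
\[
S(X)=\int_2^X f(u/X)\,d\pi(u).
\]
We then substitute $\pi(u) = \operatorname{li}(u) + E(u)$ with $E(u)\ll u(\log u)^{-A}$.

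The main term is $\int_2^X f(u/X)\,du/\log u$. After the change of variables $u = Xt$ it becomes
\[
X\int_{2/X}^1 \frac{f(t)}{\log X + \log t}\,dt.
\]
We split the range at $t = (\log X)^{-2}$.
\begin{itemize}
\item On $[(\log X)^{-2},1]$ we have $|\log t|\le 2\log_2 X$, so $1/(\log X+\log t) = (\log X)^{-1}\bigl(1+O(\log_2 X/\log X)\bigr)$.
\item The contribution of $t<(\log X)^{-2}$ is $\ll X\int_0^{(\log X)^{-2}}\bigl(1+|\log t|\bigr)\,dt \ll X\log_2 X/(\log X)^2$, with the part of this range near $2/X$, where $\log X+\log t$ is small, handled by the trivial bound $\sum_{p\le X/(\log X)^2} f(p/X)\ll (X/(\log X)^2)\log X\cdot(1/\log X)\cdot\log_2 X$.
\end{itemize}
Together these give a main term $\frac{X}{\log X}\int_0^1 f(t)\,dt$ with error $O(X\log_2X/(\log X)^2)$.

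The error term is handled by integrating by parts: $\int f(u/X)\,dE(u) = [f E] - \int E(u) f'(u/X)\,du/X$. The boundary term at $u=X$ vanishes because $f(1)=0$, and the term at $u=2$ is $O(\log X)$. Since $f'(t)\ll 1/t$ near $0$, the integral is $\ll \int_2^X |E(u)|\,du/u \ll X(\log X)^{-A}$, which is negligible.

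It remains to compute $\int_0^1 f(t)\,dt$:
\[
\int_0^1 \log t\,dt=-1,\qquad \int_0^1\log(2-t)\,dt=\int_1^2\log s\,ds = 2\log 2-1,
\]
so $\int_0^1 f\,dt = 1-(2\log2-1) = 2-\log 4$, which is the claimed constant.

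The main obstacle is controlling the singular region of small $p$, i.e. small $t$, where both $f$ and the weight $1/\log u$ are large. The split at $(\log X)^{-2}$ is exactly where the $\log_2 X$ in the error term comes from.
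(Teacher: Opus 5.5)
Your proposal is correct and takes essentially the same route as the paper. Both arguments use partial summation against $\pi(u)$ with the prime number theorem, rescale by $X$, and split the range at $u = X(\log X)^{-2}$ (your $t=(\log X)^{-2}$, the paper's $2-t=\Delta$); the loss in $1/\log u$ there is what produces the $\log_2 X$ term. The differences are only cosmetic: the paper integrates by parts before inserting $\pi(x)\approx x/\log x$ and extracts the constant via $\frac{X-x}{2X-x}=1-\frac{X}{2X-x}$, whereas you integrate $f(t)=-\log(t(2-t))$ directly. Your extra ``trivial bound'' remark for $t$ near $2/X$ is unnecessary, since $\log X+\log t=\log u\ge \log 2$ throughout the range of integration.
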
 

\begin{proof}
This is based on some manipulations from~\cite{AMMP}. By partial summation and the Prime Number Theorem, we have
\begin{align*}
\log \prod_{p\le X}\left(1-r_p^2 \right)^{-1}&=2\int_{2}^{X}\frac{\pi(x)(X-x)}{x(2X-x)}dx \\
&=2\int_{2}^{X}\frac{(X-x)}{(\log X)(2X-x)}dx+O\left(\frac{X}{(\log X)^2} \right), 
\end{align*}
where as usual, $\pi(x)$ is the number of primes $p\le x$. 
After some more elementary manipulations
\begin{align*}
\log \prod_{p\le X}&\left(1-r_p^2 \right)^{-1} =\frac{2X}{\log X}-2X\int_{2}^{X}\frac{1}{(2X-x)\log x}dx+O\left(\frac{X}{(\log X)^2}\right) \\ 
&=\frac{2X}{\log X}-2X\int_{1}^{2-2/X}\frac{1}{t(\log{(2-t)}+\log X)}dt+O\left(\frac{X}{(\log X)^2}\right) \\ 
& =(2-2\log{2})\frac{X}{\log X}+O\left( \frac{X  \log_2 X}{(\log X)^2}\right).
\end{align*} 
Indeed, let $\Delta = (\log X)^{-2}$ and split the last integral 
\[\int_{1}^{2-2/X}\frac{1}{t(\log{(2-t)}+\log X)}dt = J_1 + J_2, 
\]
where 
\begin{align*}
 & J_1 = \int_{1}^{2-\Delta}\frac{1}{t(\log{(2-t)}+\log X)}dt, \\
 & J_2 = \int_{2-\Delta }^{2-2/X}\frac{1}{t(\log{(2-t)}+\log X)}dt. 
\end{align*}
 For $J_1$ we write 
\begin{align*}
 J_1 & = \frac{1}{\log X + O\(\log\Delta^{-1}\)}  \int_{1}^{2-\Delta}\frac{1}{t}dt =   \frac{1}{\log X + O\(\log\Delta^{-1}\)} \log (2-\Delta) \\
  & =   \frac{\log 2+ O(\Delta) }{\log X + O\(\log\Delta^{-1}\)} 
  =  \frac{\log 2}{\log X }  +O\left( \frac{  \log_2 X}{(\log X)^2}\right).
  \end{align*}

Furthermore, we also have 
\begin{align*}
J_2  & \ll  \int_{2-\Delta }^{2-2/X}\frac{1}{ \log{(2-t)}+\log X }dt
\ll   \int^{\Delta}_{2/X}\frac{1}{\log t +\log X}dt \\
& =  \int^{\Delta}_{2/X}\frac{1}{\log (t X)}dt  = X^{-1}   \int^{X\Delta}_{2}\frac{1}{\log u}d u\\
& \ll  X^{-1} \frac{X\Delta}{\log (X\Delta)} \ll (\log X)^{-3}. 
  \end{align*}
We now arrive at the desired result. 
\end{proof}

We require the following consequence of the approximate functional equation for Dirichlet $L$-functions.

\begin{lemma} 
\label{lem:approx}
Let $q$ be an integer and $\chi$ be a primitive character mod $q$. For any $|t|\le q^{\varepsilon/2}$, we have 
\[
 L\left(\frac{1}{2}+it,\chi\right) \ll \biggl|\sum_{n\le q^{1/2+\varepsilon}}\frac{\alpha(n,\chi(-1),t)\chi(n)}{n^{1/2}}\biggr|+1
\]
for some complex numbers $\alpha(n,\chi(-1),t) \ll 1$.
\end{lemma}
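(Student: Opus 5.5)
We will derive the bound from the standard approximate functional equation for $L(s,\chi)$ with $\chi$ primitive modulo $q$. Write $\kappa = (1-\chi(-1))/2 \in \{0,1\}$ and $s = \tfrac12+it$. The approximate functional equation (see, e.g.,~\cite[Theorem~5.3]{IwKow}) gives
\[
L(s,\chi) = \sum_{n\ge 1}\frac{\chi(n)}{n^{s}} V_s\!\left(\frac{n}{\sqrt q}\right) + \varepsilon(\chi)\,\frac{\gamma(1-s)}{\gamma(s)}\Bigl(\frac{q}{\pi}\Bigr)^{1/2-s}\sum_{n\ge 1}\frac{\overline\chi(n)}{n^{1-s}} V_{1-s}\!\left(\frac{n}{\sqrt q}\right).
\]
Here $\gamma(s)=\pi^{-s/2}\Gamma((s+\kappa)/2)$, $|\varepsilon(\chi)|=1$, and $V_s(y)$ is the usual smooth weight. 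The weight depends only on $s$ and $\kappa$, and it satisfies $V_s(y) = 1 + O(y^{A})$ as $y\to 0$ and $V_s(y)\ll_A (1+|t|)^{A} y^{-A}$ as $y \to \infty$.

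On the line $\Re s=1/2$ we have $|\gamma(1-s)/\gamma(s)|=1$ and $|(q/\pi)^{1/2-s}|=1$, so the second sum has the same shape as the first. It involves $\overline\chi(n)$ instead of $\chi(n)$, and $|n^{-(1-s)}| = n^{-1/2}$.

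First we truncate both sums at $n\le q^{1/2+\varepsilon}$. For $n > q^{1/2+\varepsilon}$ and $|t|\le q^{\varepsilon/2}$, the decay bound with a large $A$ gives $V_s(n/\sqrt q)\ll (q^{\varepsilon/2})^{A}(n/\sqrt q)^{-A}$. Summed against $n^{-1/2}$, the tail is $\ll q^{1/4}\,q^{A\varepsilon/2}q^{-A\varepsilon}\cdot q^{\varepsilon/2}$, which is $O(1)$ once $A$ is large in terms of $\varepsilon$. This tail estimate is the only place that needs care: the exponent bookkeeping is where the hypothesis $|t|\le q^{\varepsilon/2}$ is used.

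Next we need to put the two truncated sums into the single form $\sum \alpha(n,\chi(-1),t)\chi(n) n^{-1/2}$ of the statement. The obstacle is that the dual sum carries $\overline\chi$. The statement does not demand a single combined sum: it bounds $L$ by $|\sum \alpha\chi(n)n^{-1/2}|+1$, with the coefficients allowed to depend on $t$ and $\chi(-1)$. Since $|\sum \overline\chi(n)\overline{\beta_n}| = |\sum \chi(n)\beta_n|$, each of the two sums separately has the required form, with
\[
\alpha(n,\chi(-1),t) = n^{-it}V_s(n/\sqrt q) \qquad\text{or}\qquad \alpha(n,\chi(-1),t)=\overline{n^{it}V_{1-s}(n/\sqrt q)}.
\]
Both have modulus $\ll 1$, because $V$ is bounded uniformly for $y>0$ and $|t|\le q^{\varepsilon/2}$ (shift the contour to $\Re u = \varepsilon_0$, small). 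Thus
\[
|L(s,\chi)| \le |S_1| + |S_2| + O(1),
\]
where $S_1$ and $S_2$ are the two truncated sums, and so $|L(s,\chi)| \le 2\max(|S_1|,|S_2|)+O(1)$.

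The coefficients depend on which of $S_1$, $S_2$ attains the maximum, which is a choice depending on $\chi$. We handle this in one of two ways. One option is to record it as allowed, reading "for some complex numbers" as existential per $\chi$. The cleaner option is to take $t=0$, the case actually used later, where $\overline{V_{1/2}} = V_{1/2}$ is real. Then $S_2 = \overline{S_1}\cdot\varepsilon(\chi)$ up to the unimodular factor, so
\[
|L(\tfrac12,\chi)| \le 2|S_1| + O(1),
\]
and we may take $\alpha(n,\chi(-1),0) = 2V_{1/2}(n/\sqrt q)$, a choice independent of $\chi$ beyond its parity.
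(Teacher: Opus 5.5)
Your route is the paper's: \cite[Theorem~5.3]{IwKow} with $X=1$, then $|\varepsilon(\chi,s)|=1$ on $\Re s=1/2$, then truncation at $q^{1/2+\varepsilon}$ using the decay of $V_s$ together with $|t|\le q^{\varepsilon/2}$. Your tail bookkeeping is correct.

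The gap is in the final step. The content of the lemma is that the coefficients depend only on $n$, $t$ and $\chi(-1)$. If you read it existentially per $\chi$, it is empty: taking $\alpha(n)=\overline{\chi}(n)$ makes the right-hand side of size about $q^{1/4+\varepsilon/2}$, which is already above the convexity bound. In the proof of Theorem~\ref{thm:mean-value-sub}, the same $\alpha(n,1,t)$ must serve for every $\chi\in\cH^{+}$, since otherwise Theorem~\ref{thm:MeanVal} cannot be applied. Your $\chi$-uniform fallback covers only $t=0$. Your claim that only $t=0$ is used later is also false: that proof applies the lemma at $t=\gamma+u$ with $|t|$ as large as $C(\log q)^{C}$. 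So for $t\neq 0$ you have not proved the statement in its meaningful form.

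The missing observation removes the case distinction for every real $t$. On the critical line $1-s=\bar s$. The function $G$ (e.g.\ $G(u)=\cos(\pi u/4A)^{-4A}$) and $\gamma(\chi,\cdot)$ are real on the real axis, so conjugating the contour integral gives $\overline{V_{\bar s}(y)}=V_s(y)$ for $y>0$. Hence the dual sum, without the factor $\varepsilon(\chi,s)$, is exactly $\overline{S_1}$. In fact your two coefficient sequences $n^{-it}V_s(n/\sqrt q)$ and $\overline{n^{it}V_{1-s}(n/\sqrt q)}$ are literally the same. This gives $L(\tfrac12+it,\chi)=S_1+\varepsilon(\chi,s)\overline{S_1}$, so $|L(\tfrac12+it,\chi)|\le 2|S_1|$. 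After truncation you may take $\alpha(n,\chi(-1),t)=2n^{-it}V_{1/2+it}(n/\sqrt q)$, uniformly for $|t|\le q^{\varepsilon/2}$.

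This identity is also what justifies the paper's own one-line passage from two sums to one. There the paper merely notes that $V_{1/2\pm it}$ depends only on $n$, $t$ and $\chi(-1)$.
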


\begin{proof}
We  appeal to~\cite[Theorem~5.3]{IwKow}, which states that for $X>0$,
\begin{align*}
L\left(\frac{1}{2}+it,\chi\right) =\sum_{n\geq 1}\frac{\chi(n)}{n^{1/2+it}}&V_{1/2+it}\left(\frac{n}{X\sqrt{q}},\chi\right)\\
 &   +\varepsilon(\chi,s)\sum_{n\geq 1}\frac{\overline{\chi}(n)}{n^{1/2-it}}V_{1/2-it}\left(\frac{nX}{\sqrt{q}},\chi\right),
\end{align*}
where:   
\begin{enumerate} 
\item $V_s(y,\chi)$ is a smooth function defined by 
\[
V_s(y,\chi)=\frac{1}{2\pi i}\int_{\Re u = 3}y^{-u}G(u)\frac{\gamma(\chi,s+u)}{\gamma(\chi,s)}\frac{du}{u}
\]
for an arbitrary even function  $G(u)$, which is holomorphic and bounded in $-4<\Re u<4$ and satisfies $G(0)=1$; 
\item  $\varepsilon(\chi)$ and $\varepsilon(\chi,s)$ are  defined by 
\[
\varepsilon(\chi)=\frac{\tau(\chi)}{\sqrt{q}} \mand 
\varepsilon(\chi,s)=\varepsilon(\chi)q^{1/2-s}\frac{\gamma(\chi,1-s)}{\gamma(\chi,s)}, 
\]
where $\tau(\chi)$ is the usual Gauss sum; 
\item $\gamma(\chi,s)$ is  defined by
\[\gamma(\chi,s)=\pi^{-s/2}\Gamma\left(\frac{s+\delta}{2}\right)
\]
with $\delta=0$ if $\chi$ is an even function and $\gamma(\chi,s) = 1$ otherwise. 
\end{enumerate}
We apply the above with $X=1$. Using the fact that if $\Re(s)=1/2$ then 
\[|\varepsilon(\chi,s)|=1
\]
we derive
\begin{equation}
\begin{split}
\label{eq:LLL111}
L\left(\frac{1}{2}+it,\chi\right) & \ll \Biggl|\sum_{n\geq 1}\frac{\chi(n)}{n^{1/2+it}}V_{1/2+it}\left(\frac{n}{\sqrt{q}},\chi\right)\Biggr|\\
& \qquad \qquad +\Biggl|\sum_{n\geq 1}\frac{\overline{\chi}(n)}{n^{1/2-it}}V_{1/2-it}\left(\frac{n}{\sqrt{q}},\chi\right)\Biggr|.
\end{split}
\end{equation}

We choose 
\[G(u)=\left(\cos\left(\frac{\pi u}{4A}\right) \right)^{-4A}
\]
for a sufficiently large integer $A$. By~\cite[Proposition~5.4]{IwKow}, we have 
\begin{equation}
\label{eq:VVit}
V_{1/2+it}\left(\frac{n}{\sqrt{q}},\chi\right)\ll \left(1+\frac{n}{\sqrt{q|t|}}\right)^{-A}.
\end{equation}

Substituting~\eqref{eq:VVit} into~\eqref{eq:LLL111}, using that $|t|\le q^{\varepsilon/2}$ and summing over $n$ results in 
\begin{align*}
L\left(\frac{1}{2}+it,\chi\right)& \ll \Biggl|\sum_{n\le q^{1/2+\varepsilon}}\frac{\chi(n)}{n^{1/2+it}}V_{1/2+it}\left(\frac{n}{\sqrt{q}},\chi\right)\Biggr|\\
& \qquad +\Biggl|\sum_{n\le q^{1/2+\varepsilon}}\frac{\overline{\chi}(n)}{n^{1/2-it}}V_{1/2-it}\left(\frac{n}{\sqrt{q}},\chi\right)\Biggr|+1, 
\end{align*}
from which the desired result follows, after another application of~\eqref{eq:VVit} and noting that $V_{1/2\pm it}\left(\frac{n}{\sqrt{q}},\chi\right)$ depends only on $n$, $t$ and the value of $\chi(-1)$. 
\end{proof}

We now recall the following approximation result for Dirichlet $L$-functions, see~\cite[Lemma 8.2]{GrSo-0}. 

\begin{lemma}\label{approxDir} Let $q$ be a large prime and let $\chi \in \cX_q$. Let  $X \geq 2$ and $|t|\leq 3q$ be real numbers.
Let $1/2 \leq \sigma_0 <\sigma\leq 1$ and suppose that the
rectangle 
\[\{ s:~\sigma_0 <\Re s \leq 1, \
|\Im s -t| \leq X+2\}\] 
does not contain any zeros of $L(s,\chi)$.
Then
\[
\log L(\sigma+it,\chi)= \sum_{2 \leq n \leq X} \frac{\Lambda(n)\chi(n)}{n^{\sigma+it}\log n} +
O\left( \frac{\log
q}{(\sigma_1-\sigma_0)^2}X^{\sigma_1-\sigma}\right),
\]
where  
\[\sigma_1 = \min\left\{\sigma_0+\frac{1}{\log X},
\frac{\sigma+\sigma_0}{2}\right\}.
\]
\end{lemma}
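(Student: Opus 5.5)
The plan is the standard truncated Perron argument: write the Dirichlet series of $\log L$ as a contour integral, shift the contour into the zero-free rectangle, and control $\log L$ there by the Borel--Carath\'eodory theorem. This is essentially how~\cite[Lemma~8.2]{GrSo-0} is proved, and I would reproduce it.

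\textbf{Step 1: bound $\log L$ in the zero-free region.} For $\Re s>1$,
\[
\log L(s,\chi)=\sum_{n\ge 2}\frac{\Lambda(n)\chi(n)}{n^{s}\log n}.
\]
Since the rectangle $\{\sigma_0<\Re s\le 1,\ |\Im s-t|\le X+2\}$ has no zeros, $\log L(s,\chi)$ continues analytically to a slightly smaller rectangle. Fix $s_0=2+it'$ with $|t'-t|\le X+1$. Consider discs centred at $s_0$ of radii $R=2-\sigma_0$ (minus a small margin) and $r=2-\sigma_1$. On the larger disc the convexity bound, valid since $|t|\le 3q$ and $X$ is at most a power of $q$ in applications, gives $\Re\log L=\log|L(s,\chi)|\le O(\log q)$; the part of the disc with $\Re s>1$ is handled by the Dirichlet series directly. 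At the centre, $|\log L(s_0,\chi)|\ll 1$. Borel--Carath\'eodory then yields
\[
|\log L(s,\chi)|\ll \frac{R}{R-r}\log q\ll\frac{\log q}{\sigma_1-\sigma_0}
\qquad\text{for } \Re s\ge\sigma_1,\ |\Im s-t|\le X+1.
\]

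\textbf{Step 2: the Perron integral.} Set $s=\sigma+it$ and $c=1-\sigma+1/\log X$. Integrate the Dirichlet series termwise against $X^w/w$ on the segment $[c-iX,c+iX]$. The truncated Perron formula gives
\[
\frac{1}{2\pi i}\int_{c-iX}^{c+iX}\log L(s+w,\chi)\frac{X^w}{w}\,dw=\sum_{2\le n\le X}\frac{\Lambda(n)\chi(n)}{n^{s}\log n}+E,
\]
where $E$ is the usual truncation error. To make $E$ small, I would use a smoothed kernel, for example $\bigl((X^{w}-X^{w/2})/w\bigr)$-type weights or an averaged Perron formula, or else absorb $E$ using the elementary bound $\sum_n \Lambda(n)n^{-1-1/\log X}\bigl(1+X|\log(X/n)|\bigr)^{-1}$. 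Either way $E\ll \log q\, X^{\sigma_1-\sigma}$ up to the admissible factor.

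\textbf{Step 3: shift the contour.} Move the line of integration to $\Re w=\sigma_1-\sigma<0$. The only pole crossed is at $w=0$, with residue $\log L(s,\chi)$. Using Step~1:
\begin{itemize}
\item the new vertical segment contributes $\ll \dfrac{\log q}{\sigma_1-\sigma_0}X^{\sigma_1-\sigma}\displaystyle\int_{-X}^{X}\frac{dv}{|\sigma_1-\sigma|+|v|}$;
\item the horizontal segments at height $\pm X$ contribute $\ll \dfrac{\log q}{\sigma_1-\sigma_0}\cdot\dfrac{1}{X}\displaystyle\int_{\sigma_1-\sigma}^{c}X^{u}\,du$, which is negligible.
\end{itemize}
The logarithm from the vertical integral, $\log\bigl(X/(\sigma-\sigma_1)\bigr)$, must be absorbed. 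Since $\sigma-\sigma_1\ge(\sigma-\sigma_0)/2\ge(\sigma_1-\sigma_0)/1$ and $\log X\le 1/(\sigma_1-\sigma_0)$ by the definition of $\sigma_1$, this costs at most one more factor $1/(\sigma_1-\sigma_0)$. That gives the stated error $\log q\,(\sigma_1-\sigma_0)^{-2}X^{\sigma_1-\sigma}$.

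\textbf{Main obstacle.} The delicate step is the bookkeeping in Steps~2--3: handling the Perron truncation error without losing more than $(\sigma_1-\sigma_0)^{-2}$, and justifying the Borel--Carath\'eodory upper bound for $\log|L|$ near $\Re s=\sigma_0$ uniformly in $|t|\le 3q$. For the latter I would use the convexity bound $|L(s,\chi)|\ll (q(|t|+X+3))^{1}$ on $\Re s\ge 1/2$. If the truncation error resists, I would follow~\cite{GrSo-0} directly and cite their computation.
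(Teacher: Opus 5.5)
Your argument is the standard Perron-plus-Borel--Carath\'eodory proof of \cite[Lemma~8.2]{GrSo-0}. The paper quotes that lemma without proof, and your steps go through as described.

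Two small remarks. First, the plain truncated Perron formula at height $X$ already gives $E\ll X^{-\sigma}\log X\ll X^{\sigma_1-\sigma}$. The smoothed kernels you mention would replace the sharp main term $\sum_{2\le n\le X}$ by a weighted sum, so drop them. Second, the bound $\log|L(z,\chi)|\ll\log q$ on the discs needs $X\le q^{O(1)}$ and $\chi\ne\chi_0$; otherwise the argument gives $\log(qX)$ in place of $\log q$. These conditions hold in every application in the paper. Also centre the discs at $2+it'$ with $|t'-t|\le X$ rather than $X+1$, so that they stay inside the zero-free rectangle of height $X+2$.
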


We require a variation of Lemma~\ref{approxDir},  which has better dependence on parameters for $\sigma=1$.

For a real number $X\ge 2$ we define
\[
L(1,\chi;X)=\prod_{p\le X}\left(1-\frac{\chi(p)}{p}\right)^{-1}.
\]

\begin{lemma}
\label{lem:L1approx} 
Let $q$ be a large integer and $0< c<2$. There exists a set $\cE$ of multiplicative characters $\chi \in \cX_q$ satisfying 
\begin{equation}
\label{eq:Ebound}
\#\cE\ll (\log{q})^{2c} (\log_2 q)^{3}
\end{equation}
such that if $\chi \not \in\cE$ then 
\[
L(1,\chi)=L(1,\chi;q^{4})\left(1+O\left(\frac{1}{(\log{q})^c}\right) \right). 
\]
\end{lemma}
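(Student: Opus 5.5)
We prove Lemma~\ref{lem:L1approx} by applying Lemma~\ref{approxDir} at $\sigma=1$, $t=0$ to every character whose $L$-function has no zeros near $s=1$, and putting the remaining characters into $\cE$. The size of $\cE$ will then be controlled by a classical log-free zero density estimate near the line $\Re s=1$.

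Set
\[
\eta=\frac{c'\log_2 q}{\log q}
\]
for a suitable constant $c'$, and put $\sigma_0=1-\eta$ and $X=q^4$. Let $\cE$ be the set of characters $\chi\in\cX_q$ (together with the principal character, and characters induced from small conductors if $q$ is composite) for which $L(s,\chi)$ has a zero in the rectangle
\[
\{ s:~\sigma_0<\Re s\le 1,\ |\Im s|\le X+2\}.
\]
The height $X+2=q^4+2$ is far too large for a direct density count. So we first trim the height by a standard device: start from a short sum of length $Y=(\log q)^{A}$ instead of $q^4$. Explicitly, apply Lemma~\ref{approxDir} with $X$ replaced by $Y$. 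The rectangle then has height $Y+2$, which is polylogarithmic in $q$.

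Next we compare the sum over prime powers $n\le Y$ with the Euler product $\log L(1,\chi;q^4)$. We must show that
\[
\sum_{Y<p\le q^4}\frac{\chi(p)}{p}\ll(\log q)^{-c}
\]
for $\chi\notin\cE$. Two inputs handle this.
\begin{itemize}
\item The prime powers $p^k$ with $k\ge2$ and $p^k>Y$ contribute $O(Y^{-1/2})$ trivially.
\item For the sum over primes in $(Y,q^4]$, the zero-free rectangle gives, via explicit-formula bounds for $\psi(x,\chi)$, the estimate $\psi(x,\chi)\ll x^{1-\eta/2}$ for $x\ge Y$. Partial summation then bounds the sum by $Y^{-\eta/2}\log q$ times lower-order factors.
\end{itemize}
The error term of Lemma~\ref{approxDir} is
\[
\frac{\log q}{\eta^{2}}\,Y^{-\eta/2}=\frac{\log q}{\eta^{2}}\exp\left(-\tfrac{\eta}{2}\log Y\right).
\]
We need this to be at most $(\log q)^{-c}$, which requires $\eta\log Y\ge 2(c+1+o(1))\log_2 q$. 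Since $\log Y=A\log_2 q$, this forces
\[
\eta\asymp\frac{\log_2 q}{\log Y}.
\]
In other words, $\eta$ must be chosen comparable to $1/A$. Note that this replaces the tentative value $\eta=c'\log_2 q/\log q$ fixed at the start; that value is too small and has to be revised to this one.

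The count of $\cE$ uses a log-free zero density estimate, for instance Montgomery's or Huxley's in the form
\[
\sum_{\chi\in\cX_q}N(1-\eta,T;\chi)\ll (qT)^{c_0\eta}\log(qT)
\]
for characters modulo $q$. With $T=Y$, this gives
\[
\#\cE\ll q^{c_0\eta}(\log q)^{O(1)}.
\]
That is too large unless $\eta\ll \log_2 q/\log q$, which conflicts with the requirement $\eta\asymp 1/A$ from the previous step.

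This tension is the main obstacle. I would resolve it by taking $\eta=\kappa\log_2 q/\log q$, so that $q^{c_0\eta}=(\log q)^{c_0\kappa}$. With this $\eta$ the truncation can no longer be at $Y=(\log q)^A$. Instead we truncate at $Y=q^{4}$ itself, so that
\[
Y^{-\eta/2}\approx(\log q)^{-2\kappa}.
\]
Multiplied by the prefactor $(\log q)\eta^{-2}\asymp(\log q)^3(\log_2 q)^{-2}$, this is acceptable once $2\kappa$ exceeds $c+3$.

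Truncating at $q^4$ forces the rectangle to have height about $q^4$. The density bound then picks up the factor $T^{c_0\eta}$, which is again a power of $\log q$, so the count remains polylogarithmic. Accounting for $\log(qT)\asymp\log q$, the $\eta^{-2}$ losses and a dyadic decomposition over the height should produce exactly $(\log q)^{2c}(\log_2 q)^3$ when $\kappa$ and the density exponent are tuned. The restriction $c<2$ presumably reflects the admissible range of the density theorem used.
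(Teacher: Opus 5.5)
There is a genuine gap in your final counting step, and it cannot be closed by ``tuning''. With $X=q^4$, $\sigma_0=1-\eta$ and $\eta=\kappa\log_2 q/\log q$, one has $\eta>2/\log X$. So in Lemma~\ref{approxDir} the minimum is $\sigma_1=\sigma_0+1/\log X$, not $(\sigma+\sigma_0)/2$. The error term is then $\asymp \log q\,(\log X)^2X^{-\eta}\asymp(\log q)^{3-4\kappa}$, and making it $O((\log q)^{-c})$ forces $4\kappa\ge 3+c$. No freedom is left after that. A log-free density bound $\sum_{\chi}N(1-\eta,T;\chi)\ll(qT)^{c_0\eta}$, with the height $T\asymp q^4$ that the lemma imposes, gives at best $\#\cE\ll q^{5c_0\eta}=(\log q)^{5c_0\kappa}$, i.e.\ $\#\cE\ll(\log q)^{5c_0(3+c)/4}$. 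Even granting a density-hypothesis-strength estimate $c_0=2$, this is $(\log q)^{7.5+2.5c}$. That exceeds $(\log q)^{2c}(\log_2 q)^3$ for every $0<c<2$, and, crucially, the exponent does not tend to $0$ as $c\to 0$. That feature is what the paper uses later: in the proof of Theorem~\ref{th1}(i) the lemma is applied with a small $c=\eta$ to get $\#\cE\ll(\log q)^{3\eta}$, which is what allows $H\ge(\log q)^{1+\delta}$. With your count the argument would only work for $H$ above a much larger fixed power of $\log q$. Two smaller points: your explanation of the restriction $c<2$ is misplaced (see below), and Lemma~\ref{approxDir} is stated only for prime $q$, whereas the statement is for general $q$.

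The missing idea is that this lemma is proved by a mean-square (duality) argument, not by zeros. The paper starts from~\eqref{lapprox-1}, $L(1,\chi)=L(1,\chi;Z)(1+O((\log q)^{-2}))$ with $Z=\exp((\log q)^{20})$; this $(\log q)^{-2}$ is the source of $c<2$. It then defines $\cE$ as the set of $\chi$ with $|\sum_{q^4<p\le Z}\chi(p)/p|\ge(\log q)^{-c}$. Chebyshev's inequality and Cauchy--Schwarz give
$(\log q)^{-2c}(\#\cE)^2\ll\log_2 q\sum_{q^4<p\le Z}p^{-1}\bigl|\sum_{\chi\in\cE}\vartheta(\chi)\chi(p)\bigr|^2$.
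The paper then splits into $O(\log_2 q)$ ranges $(Y_i,Y_i^2]$, majorises primes by $y_i$-rough integers with $y_i=Y_i^{1/(C\log_2 q)}$, and expands the square. Koukoulopoulos' bound~\eqref{eq:kouk} then makes each off-diagonal pair $\chi_1\ne\chi_2$ contribute $O((\log q)^{-C/30})$, while each diagonal term contributes $O(\log_2 q)$. The threshold $(\log q)^{-c}$ enters squared against a mean square of size $\#\cE(\log_2 q)^{O(1)}$, and this yields exactly $\#\cE\ll(\log q)^{2c}(\log_2 q)^3$, with an exponent that vanishes as $c\to0$.
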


\begin{proof}
We first note from~\cite[Equation~(2.1)]{AMMP} that 
\begin{equation} \label{lapprox-1}
L(1,\chi) = L(1,\chi;Z) \left(1 + O \left(\frac{1}{(\log q)^2} \right) \right),
\end{equation}
where 
\[Z=\exp\left((\log{q})^{20}\right).\]
Thus it is sufficient to show that there exists a set of characters $\cE$ satisfying~\eqref{eq:Ebound}  such that if $\chi\not \in \cE$ then 
\begin{equation}
\label{eq:LYZ}
\frac{L(1,\chi;Z)}{L(1,\chi;q^{4})}-1\ll \frac{1}{(\log{q})^{c}}.
\end{equation}
Therefore, if $\chi \not \in \cE$ then from~\eqref{lapprox-1},
\[
L(1,\chi)=L(1,\chi;q^{4})\left(1+O\left(\frac{1}{(\log{q})^{c}}\right)\right).
\]
Define 
\begin{equation}
\label{eq:Edef-1}
\cE=\Biggl\{ \chi  :~\biggl|\sum_{q^{4}<p \le Z}\frac{\chi(p)}{p}\biggr|\ge (\log{q})^{-c} \Biggr\}.
\end{equation}
Using
\begin{align*}
\frac{L(1,\chi;Z)}{L(1,\chi;q^4)}& =\exp\biggl(\sum_{q^{4}<p\le Z}-\log\left(1-\frac{\chi(p)}{p} \right) \biggr)\\
& =\exp\biggl(\sum_{q^{4}<p\le Z}\frac{\chi(p)}{p}+O\left(\frac{1}{q}\right) \biggr),
\end{align*}
we see that if $\chi \not \in \cE$ then~\eqref{eq:LYZ} holds. Hence it is sufficient to show that with $\cE$ as in~\eqref{eq:Edef-1} we have
\[
\#\cE\ll (\log{q})^{2c}(\log_2{q})^{3}.
\]
By Chebyshev's inequality,
\[
(\log{q})^{-c}\#\cE \leq \sum_{\chi \in \cE}\Biggl|\sum_{q^{4}<p \le Z}\frac{\chi(p)}{p} \Biggr| \leq \sum_{q^{4}<p \le Z}\frac{1}{p}\Biggl|\sum_{\chi\in \cE}\vartheta(\chi)\chi(p)\Biggr|
\]
for some complex numbers $\vartheta(\chi)$ satisfying $|\vartheta(\chi)|=1$. By the Cauchy--Schwarz inequality and Mertens' theorem, we have 
\begin{align*}
(\log{q})^{-2c}(\#\cE)^2& \ll \log\left(\frac{\log{Z}}{\log{q}}\right)\sum_{q^{4}<p \le Z}\frac{1}{p}\left|\sum_{\chi\in \cE}\vartheta(\chi)\chi(p)\right|^2 \\ 
& \ll \(\log_2 q\) \cdot \sum_{q^{4}<p \le Z}\frac{1}{p}\left|\sum_{\chi\in \cE}\vartheta(\chi)\chi(p)\right|^2.
\end{align*}
Define the sequence $Y_i$ by 
\begin{equation}
\label{eq:Yichoice}
Y_0 = q^{4} \mand Y_{i} = Y_{i-1}^2, \quad i =1, 2, \ldots, 
\end{equation} 
so that after partitioning summation over $p$ into intervals determined by $Y_i$'s and recalling the choice of $Z$, we arrive at the
inequality
\begin{equation}
\label{eq:EESI}
(\log{q})^{-2c}(\#\cE)^2\ll\(\log_2 q\) \cdot  \sum_{\substack{2^{i}\le (\log{q})^{19}}}S_i
\ll \(\log_2 q\)^2 \cdot  \max_{\substack{2^{i}\le (\log{q})^{19}}}S_i, 
\end{equation}
where
\[
S_i=\sum_{Y_i<p\le Y_{i+1}}\frac{1}{p}\left|\sum_{\chi\in \cE}\vartheta(\chi)\chi(p)\right|^2.
\]
Define $y_i$ by 
\begin{equation}
\label{eq:yichoice}
y_i=\exp\left(\frac{\log{Y_i}}{C\log_2 q}\right)
\end{equation}
for a suitable constant $C$ and majorize summation over primes in $S_i$ by $y_i$-rough numbers
\[
S_i \ll \sum_{\substack{Y_i<n\le Y_{i+1} \\ P^{-}(n)\ge y_i}}\frac{1}{n}\left|\sum_{\chi\in \cE}\vartheta(\chi)\chi(n)\right|^2,
\]
where $P^{-}(n)$ denotes the smallest prime factor of an integer $n$. After expanding the square and rearranging, we get 
\begin{equation}
\label{eq:Si}
S_i\ll \sum_{\chi_1,\chi_2\in \cE}\sum_{\substack{Y_i<n\le Y_{i+1} \\ P^{-}(n)\ge y_i}}\frac{(\chi_1\overline{\chi}_2)(n)}{n}.
\end{equation}

Next, we appeal to a character sum estimate (taken with $t=0$) of 
Koukoulopolous~\cite[Lemma~2.4]{Kouk}, which in our setting implies 
that if $x\ge q^{4}$ then for a sufficiently large $y$,
\begin{equation}
\label{eq:kouk}
\sum_{\substack{n\le x \\ P^{-}(n)>y}}\chi(n)=\frac{\delta(\chi)\phi(q)}{q} x \prod_{\substack{p\le y\\ p\nmid q}}\left(1-\frac{1}{p}\right)+O\left(\frac{x^{1-1/(30\log{y})}}{\log y} \right), 
\end{equation}
where $\delta(\chi)=1$ if $\chi$ is the trivial character and $0$ otherwise. By~\eqref{eq:kouk}, partial summation and our choice of $Y_{i}$ and $y_i$ in~\eqref{eq:Yichoice} and~\eqref{eq:yichoice}, respectively, we have
\begin{align*}
\sum_{\substack{Y_i\le n<Y_{i+1}\\ P^{-}(n)>y_i}}\frac{\chi(n)}{n}&\ll \delta(\chi)\frac{\log Y_i}{\log{y_i}}+ \exp\left(-\frac{\log{Y_{i}}}{30 \log{y_i}}\right)\\ 
&\ll \delta(\chi)\log_2 q+(\log{q})^{-C/30}.
\end{align*}

Substituting this into~\eqref{eq:Si}, we obtain 
\[
S_i \ll (\#\cE)\cdot(\log_2 q)+(\#\cE)^2(\log{q})^{-C/30}
\]
and hence from~\eqref{eq:EESI}
\[
(\log{q})^{-2c}(\#\cE)^2\ll (\#\cE)\cdot (\log_2 q)^{3}+(\#\cE)^2 (\log{q})^{-C/31}
\]
which after choice of $C\geq 62 c$ implies that 
\[
\#\cE\ll (\log{q})^{2c} (\log_2 q)^{3}
\] 
and completes the proof.
\end{proof}

\section{Proofs of  bounds on  character sums and zero density results}  
\subsection{Proof of Theorem~\ref{thm:MeanVal}} 

By the Cauchy--Schwarz inequality
\begin{equation}\label{eq:M Cauchy} 
M(\balpha,\cA, N)^2 \le \frac{1}{A} \sum_{\chi \in \cA} \left| \sum_{n=1}^N \alpha_n \chi(n)\right|^2. 
\end{equation}

Applying Lemma~\ref{lem:G-R Cover}, we see that 
\[
\frac{A}{2}   \sum_{\chi \in \cA} \left| \sum_{n=1}^N \alpha_n \chi(n)\right|^2  \le \sum_{\chi_1, \chi_2 \in \cA}  \sum_{\eta \in \cU} \left| \sum_{n=1}^N  \alpha_n \chi_1(n) \overline\chi_2(n) \eta(n) \right|^2. 
\]

Next,  for each character $\eta \in \cU$ we treat the values of $\eta(n)$ as
complex weights and apply Lemma~\ref{lem:H-B} to the double sum over $\chi_1, \chi_2 \in \cA$. 
This implies the bound 
\begin{align*}
\frac{A}{2}   \sum_{\chi \in \cA} \left| \sum_{n=1}^N \alpha_n \chi(n)\right|^2  &\le \# \cU  \(A N^2 + A^2N + A^{5/4} N q^{1/2}\) q^{o(1)}\\
&  \le  K  \(A N^2 + A^2N + A^{5/4}N q^{1/2}\) q^{o(1)}.
\end{align*} 
Recalling~\eqref{eq:M Cauchy}, we conclude the proof.

\subsection{Proof of Theorem~\ref{thm:mean-value-sub}}

The proof  combines the usual zero detection with an estimate derived from Theorem~\ref{thm:MeanVal}.  We first recall several constructions from the proof of~\cite[Theorem~12.1]{Mont}. Let $X,Y$ be defined by
\begin{equation}
\label{eq:XYchoice}
X=q^{\varepsilon} \mand Y= \(H + H^{1/4} q^{1/2}\)^{1/(3-2\sigma)}
\end{equation}
for some small and fixed $\varepsilon > 0$. We further define \[
a_n = \sum_{\substack{d\mid n\\d \le X}}\mu(d), 
\]
where  $\mu(d)$ is the M{\"o}bius function, and 
\[
M_X(s,\chi)=\sum_{n\le X}\frac{\mu(n)\chi(n)}{n^{s}}.
\]  By~\cite[Equations (12.25) and (12.26)]{Mont}, each $\rho=\beta+i\gamma$ satisfying 
\begin{equation}
\label{eq:Lrhochi}
L(\rho,\chi)=0, \quad \beta \ge \sigma, \quad |\gamma|\le (\log{q})^{O(1)}
\end{equation}
satisfies at least one of the following two inequalities:
\begin{equation}
\label{eq:T1}
\biggl|\sum_{X<n\le Y^{2}}a_n\chi(n)n^{-\rho}e^{-n/Y}\biggr|\ge \frac{1}{6}
\end{equation} 
or 
\begin{equation}
\label{eq:T2}
\biggl|\int_{-C(\log{q})}^{C(\log{q})} L\left(\frac{1}{2}+i\gamma+iu,\chi\right)M_X\left(\frac{1}{2}+i\gamma+iu,\chi\right)Y^{1/2-\beta+iu} du\biggr|\ge \frac{1}{6}, 
\end{equation}
where $C$ is an absolute constant.  

With $\varepsilon$ as above, since 
\[
\sum_{n\ge Y^{1+\varepsilon}}\frac{|a_n|}{n^{\beta}}e^{-n/Y}=o(1), 
\]
we see that~\eqref{eq:T1} implies 
\begin{equation}
\label{eq:T11}
\Biggl|\sum_{X<n\le Y^{1+\varepsilon}}a_n\chi(n)n^{-\rho}e^{-n/Y}\Biggr|\ge \frac{1}{7}, 
\end{equation} 
provided  that $q$ is large enough.

Considering~\eqref{eq:T2}, we apply the triangle inequality and use the trivial bound 
\[
M_X\left(\frac{1}{2}+i\gamma+iu,\chi\right)\ll X^{1/2}=q^{\varepsilon/2}
\]
to get 
\begin{equation}
\begin{split}
\label{eq:T22}
\int_{-C\log{q}}^{C\log{q}}\left|L\left(\frac{1}{2}+i\gamma+iu,\chi\right)\right| du&\gg Y^{\beta-1/2}q^{-\varepsilon/2}\\
& \gg Y^{\sigma-1/2}q^{-\varepsilon/2},
\end{split} 
\end{equation}
since by assumption $\beta \ge \sigma$.   

Let $\cR_1(\chi)$ denote the set of zeros counted by $N(\sigma,T;\chi)$ satisfying~\eqref{eq:T1} and let $\cR_2(\chi)$ denote the set of such zeros satisfying~\eqref{eq:T2} (note that  $\cR_1(\chi)$ and  $\cR_2(\chi)$ may intersect).
 We also define 
\begin{equation}
\label{eq:r1212}
R_1 = \sum_{\chi\in \cH} \# \cR_1(\chi) \mand R_2 = \sum_{\chi\in \cH} \# \cR_2(\chi) 
\end{equation}
so that
\begin{equation}
\label{eq:123}
\sum_{\chi\in \cH}N(\sigma,T; \chi)\le R_1 + R_2.
\end{equation}

Consider first $R_1$. Performing the dyadic pigeonhole principle to summation over $n$ in~\eqref{eq:T11}, there exists 
\begin{equation}
\label{eq:Xlb}
X\le N \le Y^{1+\varepsilon}
\end{equation}
 such that 
\begin{equation}
\label{eq:kkk}
 R_1\ll \log{q}  \sum_{\chi\in \cH}  \sum_{\rho\in \cR_1(\chi) }\Biggl|\sum_{N/2 \le n \le N}a_n\chi (n)n^{-\rho} e^{-n/Y}\Biggr|.
\end{equation}

We next apply H\"{o}lder's inequality to raise~\eqref{eq:kkk} to a suitable power in order to control the length of summation.

 Either $N \le Y^{2/3}$ or $Y^{2/3}< N \le Y^{1+\varepsilon}$ and in each case we can choose an integer $k$ such that 
\begin{equation}
\label{eq:123123123123123123123123123123123123123}
Y^{4/3}\le N^{k}\le Y^{2+\varepsilon}.
\end{equation}
Note that by~\eqref{eq:XYchoice} and~\eqref{eq:Xlb} we have 
$k=O(1)$.
By H\"{o}lder's inequality and~\eqref{eq:kkk}, we get 
\[
 R_1^k\ll  R_1^{k-1}(\log q)^k \sum_{\chi\in \cH}  \sum_{\rho\in \cR_1(\chi) }\Biggl|\sum_{N/2 \le n \le N}a_n\chi(n)n^{-\rho} e^{-n/Y}\Biggr|^k, 
\]
which implies
\begin{equation}
\label{eq:R1-step1}
R_1 \le q^{o(1)} \sum_{\chi\in \cH}  \sum_{\rho\in \cR_1(\chi) }\Biggl|\sum_{(N/2)^k\le n\le N^{k}}A_n\chi(n)n^{-\rho} \Biggr|
\end{equation}
for some complex numbers $A_n=n^{o(1)}$. 

In preparation to apply Theorem~\ref{thm:MeanVal}, we now remove the dependence on $\beta,\gamma$  in the above sum over zeros  $\rho=\beta+i\gamma$.  By partial summation and~\eqref{eq:Lrhochi},
\begin{align*}
\sum_{(N/2)^k\le n\le N^{k}}&A_n\chi(n)n^{-\beta-it} \le \frac{q^{o(1)}}{N^{k\sigma}}\biggl|\sum_{(N/2)^k\le n\le N^{k}}A_n\chi(n)\biggr|\\
& \qquad \qquad  +\frac{q^{o(1)}}{N^{k(\sigma+1)}}\int_{0}^{(1-2^{-k})N^{k}}\biggl|\sum_{(N/2)^k\le n\le (N/2)^{k}+x}A_n\chi(n)\biggr|dx, 
\end{align*}
which substituted into~\eqref{eq:R1-step1} results in
\begin{align*}
R_1&\le \frac{q^{o(1)}}{N^{k\sigma}} \sum_{\chi\in \cH}  \sum_{\rho\in \cR_1(\chi) }\biggl|\sum_{(N/2)^k\le n\le N^{k}}A_n\chi(n)\biggr| \\
 & \qquad +\frac{q^{o(1)}}{N^{k(\sigma+1)}}\int_{0}^{(1-2^{-k})N^{k}}\sum_{\chi\in \cH}  \sum_{\rho\in \cR_1(\chi) }\biggl|\sum_{(N/2)^k\le n\le (N/2)^{k}+x}A_n\chi(n)\biggr|dx.
\end{align*}
Taking a maximum over $x$ in the above integration, we get 
\begin{align*}
R_1&\le \frac{q^{o(1)}}{N^{k\sigma}} \sum_{\chi\in \cH}  \sum_{\rho\in \cR_1(\chi) }\Biggl|\sum_{(N/2)^k\le n\le (N/2)^{k}+x_0}A_n\chi(n)\Biggr| \\
&\le  \frac{q^{o(1)}}{N^{k\sigma}} \sum_{\chi\in \cH}  \# \cR_1(\chi) \Biggl|\sum_{(N/2)^k\le n\le (N/2)^{k}+x_0}A_n\chi(n)\Biggr|
\end{align*}
for some $x_0\le  (1-2^{-k})N^k$. For   $T = (\log q)^{O(1)}$  and  any $\chi\in \cH$, discarding 
any restrictions on the real parts of $\beta \in [0,1]$ of zeros $ \rho=\beta+i\gamma$,  by~\eqref{eq: N(1,T)} we have 
$ \# \cR_1(\chi) \le N(T; \chi) \ll T \log T \le q^{o(1)}$. Hence, 
the above inequality implies  
\[
R_1\le  \frac{q^{o(1)}}{N^{k\sigma}}\sum_{\chi\in \cH}\Biggl|\sum_{(N/2)^k\le n\le N^{k}}\widetilde{A}_n\chi(n) \Biggr|, 
\] 
for some complex numbers $\widetilde{A}_n$ satisfying $|\widetilde{A}_n|\le n^{o(1)}$. We now apply Theorem~\ref{thm:MeanVal}, which  gives 
\begin{align*}
R_1& \le  \frac{q^{o(1)}}{N^{k\sigma}}(HN^{k/2} + H^{1/2}N^{k}+ H^{5/8} N^{k/2} q^{1/4})\\
& =  \(HN^{(1/2-\sigma)k} + H^{1/2} N^{(1-\sigma)k}+ H^{5/8}  N^{(1/2-\sigma)k} q^{1/4} \)q^{o(1)}.
\end{align*}
After using~\eqref{eq:123123123123123123123123123123123123123}, we get 
\begin{equation}
\label{eq:R1}
R_1\le q^{o(1)}\left(Y^{2(1-\sigma)}H^{1/2}+\frac{H+  H^{5/8}  q^{1/4}}{Y^{4(\sigma -1/2)/3}}\right).
\end{equation}  

We next estimate $R_2$. Define $\cH^{\pm}$ by 
\[
\cH^{\pm }=\{ \chi\in \cH :~\chi(-1)=\pm 1\}.
\]
Note that $\#\cH^{+}\gg H$ and if $\cH^{-}\neq \emptyset$ then also  $\#\cH^{-}\gg H$.

In either case, we have 
\[
\#(\cH^{\pm }\cdot \cH^{\pm })\ll \#\cH^{\pm }.
\]
 Recall~\eqref{eq:r1212} and partition $R_2$ as
\[
 R_2 = \sum_{\substack{\chi\in \cH^{+}}} \# \cR_2(\chi)+ \sum_{\substack{\chi\in \cH^{-}}} \# \cR_2(\chi)=R_2^{+}+R_2^{-}.
\]

We consider only $R_2^{+}$, a similar argument and bound applies to $R_2^{-}$. By Lemma~\ref{lem:approx}, if  $t\le (\log{q})^{O(1)}$
then 
\[
L\left(\frac{1}{2}+it,\chi \right)\ll \Biggl|\sum_{n\le q^{1/2+\varepsilon}}\frac{\alpha(n,1,t)\chi(n)}{n^{1/2}} \Biggr|+1, 
\]
with  $|\alpha(n,1,t)|\ll 1$. 
Substituting into~\eqref{eq:T22} results in 
\begin{align*}
 Y^{\sigma-1/2}R^{+}_2 
 & \ll q^{\varepsilon/2} \sum_{\chi \in \cH^{+}}\sum_{\rho\in \cR_2(\chi)}\int_{-C\log{q}}^{C\log{q}}\left(\Biggl|\sum_{n\le q^{1/2+\varepsilon}}\frac{\alpha(n,1,\gamma+u)\chi(n)}{n^{1/2}} \Biggr|+1\right)du \\ 
& \ll  q^{\varepsilon/2} \sum_{\chi \in \cH^{+}}\sum_{\rho\in \cR_2(\chi)}\int_{-C\log{q}}^{C \log{q}}\Biggl|\sum_{n\le q^{1/2+\varepsilon}}\frac{\alpha(n,1,\gamma+u)\chi(n)}{n^{1/2}} \Biggr|du +q^{\varepsilon}R^{+}_2.
\end{align*} 

Recalling  the restriction on $\gamma$ in~\eqref{eq:Lrhochi},  and increasing $C$ if necessary, we obtain 
\[
 Y^{\sigma-1/2}R^{+}_2 
\ll   q^{\varepsilon/2} \sum_{\chi \in \cH^{+}}\sum_{\rho\in \cR_2(\chi)} \int_{-C(\log{q})^{C}}^{C(\log{q})^{C}} \Biggl|\sum_{n\le q^{1/2+\varepsilon}}\frac{\alpha(n,1,u)\chi(n)}{n^{1/2}} \Biggr|du +q^{\varepsilon}R^{+}_2, 
\]
which, as in the previous case,  eliminates the dependence on $\rho$.

Since $1> \sigma>1/2$ and we can assume that 
\[H\ge q^{(2-2\sigma)/(2-\sigma)},
\]
(as otherwise the bound~\eqref{eq: TrivBound} is stronger that the desired result), by our choice of $Y$ in~\eqref{eq:XYchoice} and taking a sufficiently 
small $\varepsilon> 0$, the above implies that 
\[
Y^{\sigma-1/2}R^{+}_2  \ll   q^{\varepsilon/2} \int_{-C(\log{q})^{C}}^{C(\log{q})^{C}}\sum_{\chi \in \cH^{+}}\#\cR_2(\chi)\Biggl|\sum_{n\le q^{1/2+\varepsilon}}\frac{\alpha(n,1,u)\chi(n)}{n^{1/2}} \Biggr|du .
\]
 Next, using~\eqref{eq: N(1,T)},   we
estimate $ \# \cR_2(\chi) \le N(T; \chi) \ll T \log T \le q^{o(1)}$. Hence, we now derive
\[
Y^{\sigma-1/2}R^{+}_2 
\ll q^{\varepsilon}\sum_{\chi \in \cH^{+}}\Biggl|\sum_{n\le q^{1/2+\varepsilon}}\frac{\widetilde\alpha(n)\chi(n)}{n^{1/2}} \Biggr|
\]
for some complex numbers $\widetilde\alpha(n)$ satisfying $|\widetilde\alpha(n)|\ll 1$.

 By the dyadic pigeonhole principle, there exists $1\le N \le q^{1/2+\varepsilon}$ such that 
\[
Y^{\sigma-1/2}R^{+}_2 \ll \frac{q^{2\varepsilon}}{N^{1/2}}\sum_{\chi \in \cH^{+}}\left|\sum_{N\le n \le 2N}\hat\alpha(n)\chi(n)\right|
\]
for some complex numbers $\hat\alpha(n)$ satisfying $|\hat\alpha(n)|\ll 1$.

By Theorem~\ref{thm:MeanVal}, 
\begin{align*}
Y^{\sigma-1/2}R^{+}_2 & \ll q^{2\varepsilon} \(H+H^{1/2}N^{1/2} + H^{5/8}q^{1/4}\)\\
& \le q^{2\varepsilon} \(H+H^{1/2}q^{1/4+\varepsilon/2}+ H^{5/8}q^{1/4}\), 
\end{align*} 
which, since $\varepsilon >0$ is arbitrary,  implies
\[
R^{+}_2\le  \frac{H + H^{5/8}q^{1/4}}{Y^{\sigma-1/2}} q^{o(1)}.
\]
A similar argument shows that 
\[
R^{-}_2\le  \frac{H + H^{5/8}q^{1/4}}{Y^{\sigma-1/2}} q^{o(1)}.
\]
Combining the above with~\eqref{eq:123} and~\eqref{eq:R1}, we get 
\[
\sum_{\chi\in \cH}N(\sigma,T; \chi)\ll q^{o(1)}\left(Y^{2(1-\sigma)}H^{1/2}+\frac{H + H^{5/8}q^{1/4}}{Y^{\sigma-1/2}} \right).
\]
Finally, the result follows by the choice of $Y$ in~\eqref{eq:XYchoice}.

\section{Proofs of results on extreme values}
\subsection{Proof of Theorem~\ref{th1}}\label{sec:th1}
\label{sec:12}

\subsubsection{Proof of the unconditional lower bound~(i)}  
We follow the argument of the proof of~\cite[Theorem~1.1]{AMMP},  which is based on the variant of the resonance method introduced in~\cite{AMM}. 
 Suppose $\cH$ is a subgroup of $\cX_q$ of order $H$ and such that its kernel $\Ker \cH \subseteq \Z_q^*$ is of size $\# \Ker \cH  =d$, where $d=\varphi(q)/H$.
 
To establish Part~(i), we fix some parameter  $\kappa$ with 
  \begin{equation} \label{eq:kappa def} 
0 < \kappa  < \frac{\delta}{(1+\delta) \log 4}
  \end{equation}  
 which implies that 
  \begin{equation} \label{eq:kappa ineq} 
  (1+\delta)(1-\kappa \log 4) > 1. 
  \end{equation}  
Then we set 
 \begin{equation} \label{eq:X} X = \kappa  (\log H)\cdot \log_2 H. 
 \end{equation}

 We define $L(1,\chi; X)$ and the coefficients $a_k$, $ k =1, 2, \ldots$, by the relation
\begin{equation}
\label{eq:LX expansion} 
L(1,\chi; X) = \prod_{p \leq X} \left(1 - \frac{\chi(p)}{p} \right)^{-1} = \sum_{k=1}^\infty a_k \chi(k).
\end{equation}
 For primes $p \leq X$, we set
\begin{equation}
\label{eq:qp} 
r_p = 1-\frac{p}{X},
\end{equation}
while for primes $p > X$ we set $r_p = 0$, and we extend this to $r_m$ for general $m$ in a completely multiplicative way.  For a given character $\chi \in \cH$, we define the resonator function $R(\chi)$ as
\[
R(\chi) = \prod_{p\leq X} (1-r_p\chi(p))^{-1} = \sum_{m=1}^\infty r_m \chi(m).
\]

Similarly to~\eqref{eq:LX expansion}, we define  the coefficients $b_k$, $ k =1, 2, \ldots$, by the relation
\begin{equation}
\label{eq:LY expansion} 
L(1,\chi; q^{4}) = \prod_{p \leq q^{4}} \left(1 - \frac{\chi(p)}{p} \right)^{-1} = \sum_{k=1}^\infty b_k \chi(k).
\end{equation}   

For $q$ large enough, we have $q \geq X$. Hence, by definition
the coefficients $a_k$ and $b_k$ are non-negative integers 
such that $b_k \geq a_k$ for all $k\ge 1$.

We now fix some $\eta$ with $0 < \eta < 2$ and such that 
\begin{equation}
\label{eq:eta} 
 1+ 6 \eta < (1+\delta)(1-\kappa \log 4),
 \end{equation}   
which is possible by the choice of $\kappa$ satisfying~\eqref{eq:kappa ineq}.

By Lemma~\ref{lem:L1approx}, we have\begin{equation} \label{lapprox}
L(1,\chi) = L(1,\chi;q^{4}) \left(1 + O \left(\frac{1}{(\log q)^{\eta}} \right) \right),
\end{equation}
for all characters $\chi \bmod q$ except for $\chi_0$ and an exceptional set of characters $\cE$ which satisfies 
\begin{equation}
\label{eq:T21-Ebound}
\#\cE\ll (\log{q})^{3\eta} . 
\end{equation}
Set 
\[
S_1 = \sum_{\chi \in   \cH} L(1,\chi; q^{4}) |R(\chi)|^2 \mand S_2 = \sum_{\chi \in   \cH} |R(\chi)|^2.
\]

By expanding $|R(\chi)|^2$, recalling~\eqref{eq:LY expansion} and using orthogonality of characters, we have
\[
S_1  = \sum_{\chi \in \cH} \sum_{k=1}^\infty \sum_{m,n=1}^\infty b_k \chi(k) r_m r_n \chi(m) \overline{\chi(n)}   
 = H\sum_{k=1}^\infty  b_k \sum_{h \in \Ker \cH}  \sum_{\substack{m,n=1\\kh m\equiv n \bmod q}}^{\infty} r_m r_n .
\]
Using $b_k \geq a_k \ge 0$  and $r_k \ge 0$ for all $k =1, 2, \ldots$, we see that 
\begin{equation}  \label{eq: akqm}
S_1   \geq H\sum_{k=1}^\infty ~ a_k \sum_{h \in \Ker \cH}\sum_{\substack{m,n=1\\~khm \equiv n \bmod
 q}}^\infty  r_m r_n.
 \end{equation}

Similarly,  expanding $S_2$, we have
\begin{equation}  \label{i2rep}
S_2  = \sum_{\chi \in  \cH} \sum_{m,n=1}^\infty r_m r_n\chi(m)\overline{\chi(n)}   =  H\sum_{h\in \Ker \cH} \sum_{\substack{m,n=1\\ hm \equiv n \bmod q}}^\infty r_m r_n.
\end{equation}
Exploiting the fact that the coefficients of the resonator $R(\chi)$ are constructed in a completely multiplicative way,  for each $k=1,2,\ldots$, we have 
\begin{align*}
\sum_{h \in \Ker \cH} \sum_{\substack{m,n\\~khm \equiv n \bmod q}}  r_m r_n & \geq \sum_{h \in \Ker \cH} \sum_{\substack{m,n: ~k\mid n,\\~khm \equiv n \bmod q}} r_m r_n\\ 
& =   \sum_{h \in \Ker \cH}\sum_{\substack{\ell,m\\ khm \equiv k\ell \bmod q}} r_m r_{k\ell}  \\
& \geq r_k  \sum_{h \in \Ker \cH} \sum_{\substack{\ell, m\\\ell \equiv hm \bmod q}} r_m r_\ell.
\end{align*}
 Hence, we see from~\eqref{eq: akqm} and~\eqref{i2rep}, that 
\[
S_1   \geq H\sum_{k=1}^\infty  a_k r_k \sum_{h \in \Ker \cH} \sum_{\substack{\ell, m\\\ell \equiv hm \bmod q}} r_m r_\ell = S_2  \sum_{k=1}^\infty  a_k r_k
\]
from which we derive 
\[
\frac{S_1}{S_2}  \geq \sum_{k=1}^\infty a_k r_k  
 = \prod_{p \leq X} \left(1 - r_p p^{-1} \right)^{-1}.
\]
Using~\cite[Equations~(2.7) and~(2.8)]{AMMP} to estimate the above product with  $r_p$ given by~\eqref{eq:qp}, and then recalling~\eqref{eq:X}, we arrive at 
the following analogue of~\cite[Equation~(2.9)]{AMMP}
\begin{equation}  
\begin{split} \label{eq: s1s2ratio}
\frac{S_1}{S_2} &\geq e^{\gamma_0} \log X \left(1 - \frac{1}{\log X} +O\left(\frac{1}{(\log X)^2} \right) \right)  \\
& =   e^{\gamma_0} \left(\log_2 H + \log_3 H + \log \kappa - 1 + o\left(1\right) \right).
\end{split}
\end{equation}

We note that by the Prime Number Theorem 
\begin{equation}  \label{eq: boundr}
\log |R(\chi)|^2  \le \log |R(\chi_0)|^2   = \frac{2X}{\log X}+O\left(\frac{X}{(\log X)^2}\right)
\end{equation}  
and that by Merten's theorem (see, for example,~\cite[Equation~(2.16)]{IwKow}), 
\begin{equation} \label{eq: L bound chi0}
 L(1,\chi_*;q^{4}) \ll \log q , \quad \chi_* \in  \cE.
\end{equation} 
  
We next give a lower bound for $S_2$. Restricting $hm\equiv n \bmod{q}$ in~\eqref{i2rep} to $hm=n$, by multiplicativity of the coefficients $r_k$,  we arrive at 
\begin{equation}  \label{eq: LB S2}
\begin{split} 
S_2& \ge  H\sum_{h\in \Ker \cH} \sum_{\substack{m,n=1\\ hm  = n}}^\infty r_m r_n 
= H\sum_{h \in \Ker \cH} r_h \sum_{m=1}^\infty r_m^2 \ge  H \sum_{m=1}^\infty r_m^2
\end{split}
\end{equation}
where we have used the fact that $1 \in \Ker \cH$ for any $\cH$. 

By Lemma~\ref{lem:q2},
\[
\sum_{m=1}^\infty r_m^2 \ge  \exp\left((2-\log{4})\frac{X}{\log X}+O\left(\frac{X  \log_2 X}{(\log X)^2}\right)\right).
\]
We now derive 
from~\eqref{eq: LB S2}  that 
\begin{equation}\label{lower_S2}
S_2 \geq \exp\left((2-\log{4})\frac{X}{\log X}+\log{H}+O\left(\frac{X \log_2 X}{(\log X)^2}\right)\right).
\end{equation}

We next give an upper bound for terms involving $ L(1,\chi_*; q^{4})   R(\chi_*)^2$. Combining~\eqref{eq: boundr} with~\eqref{eq: L bound chi0}, we have 
\begin{equation} \label{eq: LR bound chi0}
\left | L(1,\chi_*; q^{4})   R(\chi_*)^2\right|   \leq \exp\(  \frac{2X}{\log X}+\log_2 q+O\left(\frac{X}{(\log X)^2}\right) \),
\end{equation} whenever $\chi_* \in  \cE$.

We next show that with our choice of parameters~\eqref{eq:X}  we have 
\begin{equation} \label{eq: LR and S2}
 L(1,\chi_{*}; q^{4})  R(\chi_*)^2=O\left(\frac{S_2}{(\log{q})^{4\eta}}\right), \qquad \chi_* \in \cE.
\end{equation}
We see from~\eqref{lower_S2} and~\eqref{eq: LR bound chi0} that this is satisfied provided
\[
\frac{X \log 4}{\log X}\le \log{H}-(1+4\eta)\log_2{q}+ O\(\frac{X \log_2 X}{(\log X)^2}\)
\]
or 
\begin{equation} \label{eq: X vs H-q}
\( \log 4 + o(1)\)\frac{X}{\log X}\le \log{H}-(1+5\eta)\log_2{q}. 
\end{equation}
In turn, by our choice of $X$ in~\eqref{eq:X}, we have 
\[
\frac{X}{\log X}   = \(\kappa +o(1)\) \log H, 
\]
as $H\to \infty$.
Thus, 
 the inequality~\eqref{eq: X vs H-q} is satisfied for sufficiently large $q$ provided 
\[
\(1- \kappa \log 4\) \log{H}\geq  (1+6\eta)\log_2{q} 
\]
which, for   $H\geq  (\log q)^{1+\delta}$ is ensured by~\eqref{eq:eta}.
Hence we also have~\eqref{eq: LR and S2}. 

We now set 
\[
S_1^*= \sum_{\chi \in   \cH\setminus \cE} L(1,\chi;  q^{4}) |R(\chi)|^2 \quad 
\text{and}\quad S_2^*  = \sum_{\chi \in   \cH\setminus \cE} |R(\chi)|^2.
\]

To get a lower bound on  the ratio $|S_1^*|/S_2^*$, we first notice that $0 \le S_2^* \le S_2$. From~\eqref{eq:T21-Ebound} and~\eqref{eq: LR and S2}  we derive 
\[
\frac{|S_1^*|}{S_2^*}  \geq \frac{|S_1^*|}{S_2}   = \frac{S_1}{S_2}  +O\left(\frac{\#\cE}{(\log{q})^{4\eta}}\right)=\frac{S_1}{S_2}+O\(\frac{1}{(\log q)^{\eta}}\)
\]
which, upon recalling~\eqref{eq: s1s2ratio},  yields  
\[
\frac{|S_1^*|}{S_2^*} 
 \geq  e^{\gamma_0} \(\log_2 H + \log_3 H + \log \kappa - 1   + o(1) \).
\]
In turn, together with~\eqref{lapprox} this implies that there is a non-principal character $\chi \in H$ for which
\[
|L(1,\chi)|  \geq  e^{\gamma_0} \(\log_2 H + \log_3 H + \log \kappa - 1   + o(1) \).
\]
Since $\kappa$ is arbitrary with~\eqref{eq:kappa def}, this completes the proof of Part~(i).

\subsubsection{Proof of the conditional lower bound~(ii)}
We now assume the GRH for all Dirichlet $L$-functions of modulus $q$. Applying Lemma~\ref{approxDir} with $\sigma_0=1/2$ and $Y=(\log q)^2(\log_2 q)^6$, we have (see the discussion after~\cite[Lemma~2.1]{GrSo-2})  
\[
L(1,\chi) = L(1,\chi; Y) \left(1 + O \left(\frac{\log_3 q}{\log_2 q} \right) \right).
\]
for all non-principal character $\chi \in \cX_q$.

We now argue exactly as in the first part of the proof and also use 
that by  Merten's formula (see~\cite[Equation~(2.16)]{IwKow}) we have the bound $L(1, \chi_0; Y)\ll \log_2 q$. 
The only change comes from the following 
estimate replacing~\eqref{eq: LR bound chi0}
\[
| L(1,\chi_0; Y) R(\chi_0)^2| \leq \exp\( (1+o(1))\left(2\kappa +\frac{\log_3 q}{\log H}\right)  \log H \),
\]
with the same choice of $X=\kappa (\log H)\cdot \log_2 H$ and $\kappa$.

Thus, under the weaker condition $H \geq (\log_2 q)^{1+\delta}$, we can finish the proof following exactly the same steps as in the unconditional case above.

\subsection{Proof of Theorem~\ref{th3}}

\subsubsection{Proof of the unconditional lower bound~(i)}
We follow the lines of \cite{AMMP}.
Let $N(\sigma,T, \chi)$ denote the number of zeros of $L(s,\chi)$ such that $\Re s\geq \sigma$ and $|\Im s|\leq T$. We observe that 
the estimates  of Theorem~\ref{thm:mean-value-sub} can be written as  \begin{equation}
\label{eq:Hdensity}
\sum_{\chi\in \cH}N(\sigma,T; \chi)\le \(H^{(7-6\sigma)/(6-4\sigma)} +  H^{(4-3\sigma)/(6-4\sigma)} q^{(1-\sigma)/(3-2\sigma)}\) q^{o(1)}, 
\end{equation}
provided $T=(\log{q})^{O(1)}$.

Thus, combining~\eqref{eq:Hdensity} and Lemma~\ref{approxDir} with 
$\sigma_0=\sigma-\eta>1/2$, for some fixed but sufficiently small $\eta$, $X=(\log q)^{4/\eta}$, $t=0$ and $T=X+2$,  we obtain
\begin{equation}\label{approxsum}
 \log L(\sigma,\chi)=\sum_{2 \leq n \leq X} \frac{\Lambda(n)\chi(n)}{n^{\sigma}\log n}+O\left(\frac{1}{\log q}\right),\end{equation} 
for all characters $\chi$ modulo $q$ except for a set of  ``bad" characters $\sfB_\sigma(\cH)$ of cardinality 
\[
\sfB_\sigma(\cH)   \le  \(H^{(7-6\sigma_0)/(6-4\sigma_0)} +  H^{(4-3\sigma_0)/(6-4\sigma_0)} q^{(1-\sigma_0)/(3-2\sigma_0)}\) q^{o(1)}. 
\]
 If $\eta>0$ is small enough, simple calculus shows that for $H   \geq q^{(2-2\sigma)(2-\sigma) + \delta}$ we have 
\begin{equation} 
\label{eq:Hdensity1}
\#\sfB_\sigma(\cH) \ll H^{1- \kappa }
\end{equation}
for some $\kappa> 0$ which depends only on  $\delta$, $\eta$ and $\sigma$.

 We set $\sfB_\sigma^*(\cH)= \sfB_\sigma(\cH) \cup \{\chi_0\}$, where $\chi_0$ is the trivial character modulo $q$
 and consider the set of ``good" characters $\sfG_{\sigma}(\cH) = \cH \setminus \sfB_\sigma^*(\cH)$.
 Thus, we have

\[
\log L(\sigma,\chi) =  \sum_{2 \leq n \leq X} \frac{\Lambda(n)\chi(n)}{n^{\sigma}\log n}
+ O\left(\frac{1}{(\log q)}\right), \qquad \chi \in  \sfG_\sigma(\cH).
\]   

It follows that, for a fixed $\sigma>1/2$, 
\begin{equation} \label{S:approx}
\log L(\sigma,\chi) = S_{\chi}(\sigma,X) + O(1), \qquad \chi \in \sfG_\sigma(\cH),
\end{equation}
where 
\[
S_{\chi}(\sigma,X)= \sum_{p \leq X} \frac{\chi(p)}{p^{\sigma}}.
\] Consequently, in order to prove 
Theorem~\ref{th3}, it suffices to exhibit large values of $S_{\chi}(\sigma,X)$ for $\chi \in \sfG_\sigma(\cH)$. 

Let 
\begin{equation} \label{eq: Y def}
Y=0.5 \kappa (\log H) \log_2 H, 
\end{equation}  
where $\kappa$ is as in~\eqref{eq:Hdensity1}. 

 Furthermore, we set $r_1=1$ and $r_p=0$ for $p>Y$, and $r_p=\tfrac12$ for all primes $p\leq Y$. We extend the definition of $r_p$ in a completely multiplicative way to obtain weights $r_n$ for all $n\geq 1$. We now define for $\chi\in \cH$ 
\begin{equation} \label{res:def}
R(\chi)=\prod_{p\leq Y} \left(1-r_p\chi(p)\right)^{-1} = \prod_{p\leq Y} \left(1-\frac{\chi(p)}{2} \right)^{-1}= \sum_{n=1}^\infty r_n \chi(n).
\end{equation}

Similar to the previous section, we consider the sums
\[
S_1^{\sfG} = \sum_{\chi \in  \sfG_\sigma(\cH)} S_{\chi}(\sigma,X)\left|R(\chi)\right|^2 \mand 
S_2^{\sfG} =\sum_{\chi \in  \sfG_\sigma(\cH)} \left|R(\chi)\right|^2,
\]
as well as the sums 
\[
S_1 = \sum_{\chi \in \cH} S_{\chi}(\sigma,X)\left|R(\chi)\right|^2 \mand 
S_2=\sum_{\chi \in \cH} \left|R(\chi)\right|^2,
\]
and we  use the simple inequality  
\begin{equation}\label{eq: max S}
\max_{\chi \in \sfG_\sigma(\cH)} \left|S_{\chi}(\sigma,X)\right| \geq \frac{ \left|S_1^{\sfG}  \right|}{S_2^{\sfG} }  \geq \frac{ \left|S_1^{\sfG}  \right|}{S_2}. 
\end{equation} 
Next we  trivially have 
\begin{equation}\label{S2tout}
\begin{split} S_2 &=  \sum_{\chi \in \cH} \left|R(\chi)\right|^2  =\sum_{\chi\in \cH}\sum_{m,n = 1}^\infty r_m r_n \chi(m)\overline{\chi(n)}\\
&= H \sum_{h \in \Ker \cH} \sum_{\substack{m,n =1\\hm\equiv n \bmod q}}^\infty r_m r_n \gg H.
\end{split}
\end{equation}  

Next, from~\eqref{res:def} and our choice of $Y$, we immediately obtain
\[
|R(\chi)|^2 \leq 2^{2 \pi(Y)} \leq \exp((2\log 2)(1+o(1))Y/\log Y) \leq H^{\kappa \log 2 + o(1)}, 
\]
where $\pi(Y)$ denotes the number of primes $p\le Y$. 
Hence, using 
\[
|S_{\chi}(\sigma,X)| \leq \sum_{p=2}^{X}\frac{1}{p^{\sigma}} \ll X^{1-\sigma} = (\log q)^{4\frac{(1-\sigma)}{\eta}}=q^{o(1)}, 
\]
we derive, recalling \eqref{eq:Hdensity1},
\begin{equation}\label{S1bad}
 \sum_{\chi \in \sfB_{\sigma}^*(q)} S_{\chi}(\sigma,X) |R(\chi)|^2   \le H^{1-\kappa + \kappa \log 2} q^{o(1)} 
 \ll H^{1-0. 3\kappa}.  
\end{equation}

On the other hand, expanding $|R(\chi)|^2 $ and switching the order of summation, we have 
\begin{align*}
S_1& =   \sum_{\chi \in \cH}  S_{\chi}(\sigma,X) \left|R(\chi)\right|^2\\ 
& =   \sum_{p=1}^{X} \frac{1}{p^{\sigma}}   \sum_{m,n = 1}^\infty r_m r_n 
 \sum_{\chi \in \cH}\chi(m)\overline{\chi(n)}\chi(p) ,
\end{align*} 
where the inner sum is positive from the orthogonality relations on $\cH$. Thus
\begin{equation}\label{quotientdev} 
S_{1} = \sum_{p=1}^{X}\frac{1}{p^{\sigma}} H \sum_{h \in \Ker \cH}\sum_{\substack{m,n=1\\ hpm\equiv n \bmod q}}^\infty r_m r_n .
\end{equation} 
Assume $p$ to be fixed such that $\gcd(p,q)=1$. Then, using the positivity and the completely multiplicative property of the coefficients $r_k$ we get 
\begin{align*}
H\sum_{h \in \Ker \cH}\sum_{\substack{m,n=1\\ hpm\equiv n \bmod q}}^{\infty}  r_m r_n  &\geq H\sum_{h \in \Ker \cH} \sum_{\substack{m,n=1\\p \mid n \\hpm\equiv n \bmod q}}^\infty  r_m r_n  \\ & = H \sum_{h \in \Ker \cH} \sum_{\substack{m,\ell=1 \\ hpm\equiv p\ell \bmod q}}^\infty r_m \underbrace{r_{p\ell}}_{=r_p r_\ell} \\
&  =  r_p H\sum_{h \in \Ker \cH} \sum_{\substack{m,\ell=1 \\  hm\equiv \ell \bmod q}}^\infty r_m r_\ell \\
&= r_p S_2. 
\end{align*} 
Recalling~\eqref{quotientdev} and  noticing that $r_p = 0.5$ for $p \leq Y$ and $r_p=0$ for $p>Y$ we derive
\begin{equation}\label{eq: S1/S2} 
 \frac{ \left|S_1\right|}{S_2}  \geq  \sum_{\substack{p=2,\\ (p,q)=1}}^{X} \frac{r_p}{p^{\sigma}} = \sum_{p=2}^{Y}\frac{1}{2p^{\sigma}}  
 \geq \frac{Y^{1-\sigma}(1+o(1))}{2(1-\sigma)\log Y}.
 \end{equation}
 
We also deduce 
from~\eqref{S2tout} and~\eqref{S1bad}  that  for a fixed $\sigma$, 
\begin{align*} 
 \frac{ \left|S_1^{\sfG}  \right|}{S_2} = \frac{|S_1| + O\(H^{1-0. 3\kappa}\)}{S_2} 
 = \frac{ \left|S_1\right|}{S_2}  + O\(H^{-0. 3\kappa}\), 
 \end{align*}  
 which together with~\eqref{eq: max S} and~\eqref{eq: S1/S2}, and the choice of $Y$  in~\eqref{eq: Y def}, 
 implies 
 \[
 \max_{\chi \in \sfG_\sigma(\cH)} \ |S_{\chi}(\sigma,X)| \gg(\log H)^{1-\sigma} (\log_2 H)^{-\sigma}.
 \]
On the set $ \sfG_\sigma(\cH)$,  the sum $S_{\chi}(\sigma,X)$ is a good approximation to $\log L(\sigma,\chi)$, as noted 
in~\eqref{S:approx}. This completes the proof of Part~(i).

\subsubsection{Proof of the conditional lower bound~(ii)} 
Assuming the GRH, we apply~\eqref{approxsum} with $t = 0$, $y = (\log q)^{4/(\sigma-1/2)}$ and $\sigma_0 = 1/2$. We obtain for $\chi\neq \chi_0$,
\[
\log L(\sigma,\chi)= \sum_{n=2}^{y}\frac{\Lambda(n)\chi(n)}{n^{\sigma}\log n} +O\left(\frac{1}{(\log q)^{2}}\right).
\]
Setting 
\begin{equation}
\label{eq:Ychoice1/2}
Y= \frac1B (\log H) \log_2 H, 
\end{equation} we derive
\[
|R{(\chi_0)}|^2\leq \exp\left(\frac{2\log 2}{B}(1+o(1))\log H\right),
\]
and 
\[
|S_{\chi_0}(\sigma, y)||R{(\chi_0)}|^2\leq\exp\left((1+o(1))\left(\frac{2\log 2}{B}\log{H}+b(\sigma)\log_2 q\right)\right), 
\]
where $b(\sigma)=8/(2\sigma-1)$.
By applying orthogonality of characters for the subgroup $\cH$, we obtain
\begin{equation}
\label{eq: sum R2}
\sum_{\chi\in \cH}|R(\chi)|^2=H\sum_{h\in \text{Ker}{\cH}}\sum_{\substack{m,n =1\\hm\equiv n \bmod q}}^\infty r_m r_n\ge H\sum_{m=1}^{\infty}r_m^2.
\end{equation}
Recalling the definition of the coefficients $r_m$, in particular their complete multiplicativity, similarly to~\eqref{res:def}, we see that 
\[
\sum_{m=1}^{\infty}r_m^2 =  \prod_{p\le Y}\left(1-r_p^2\right)^{-1} =  \prod_{p\le Y}\left(1-\frac{1}{4}\right)^{-1}
=\( \frac{4}{3}\)^{\pi(Y)}.
\] 

From~\eqref{eq:Ychoice1/2}, \eqref{eq: sum R2} and the Prime Number Theorem, 
this implies 
\[
\sum_{\chi\in \cH}|R(\chi)|^2\geq H \exp\left(\frac{\log (4/3)}{B}(1+o(1))\log H\right).
\]
Arguing as before, we deduce that 
\begin{align*}
\max_{\substack{\chi\in \cH\\ \chi\neq \chi_0}}|S_\chi(\sigma, y)|& \geq (1+o(1))  \frac{Y^{1-\sigma}}{2(1-\sigma)\log Y} \\ 
&+O\left(\exp\left((1+o(1))\left(\frac{\log{3}}{B}\log{H}+b(\sigma)\log_2 q\right)-\log{H}\right) \right).
\end{align*}
Choosing  
\[
B =\frac{(1+ \delta)\log 3}{\delta},
\]
 the assumption that 
\[H\ge (\log{q})^{(1+\delta)b(\sigma)}\]
implies 
\[
\frac{\log{3}}{B}\log{H}+b(\sigma)\log_2 q-\log{H} \le\frac{\delta}{1+\delta}\log {H}+\frac{1}{1+\delta}\log {H}-\log{H} = 0.
\]
Hence, recalling the choice of $Y$ in~\eqref{eq:Ychoice1/2}, we conclude that
\begin{align*}
\max_{\substack{\chi\in \cH\\ \chi\neq \chi_0}}|S_\chi(\sigma, y)|
& \geq (1+o(1)) \frac{Y^{1-\sigma}}{2(1-\sigma)\log Y}\\
&=\frac{(1+o(1))}{2(1-\sigma)}\left(\frac{\delta}{(1+\delta) \log{3}}\right)^{1-\sigma}\frac{(\log{H})^{1-\sigma}}{(\log_2{H})^{\sigma}}.
\end{align*}
This completes the proof.

\subsection{Proof of Theorem~\ref{th4}}
We follow the proof of~\cite[Th{\'e}or{\`e}me~1.5]{dlbTen} with the necessary changes.
Note that $\mathcal{H}^+\subset \mathcal{X}_q$ has order $H/2$.  If $\chi\in \mathcal{H}^+$ then from~\cite[Lemma~2]{Sound}, we have
\begin{equation}\label{approximation}
\left|L\(\tfrac{1}{2}, \chi\)\right|^2=2\sum_{k, \ell\geq 1}\frac{\chi(k)\overline{\chi(\ell)}}{\sqrt{kl}}W_0\left(\frac{\pi k\ell}{q}\right),
\end{equation}
where 
\[
W_0(x)=\frac{1}{2\pi i}\int_{\Re s = 2}\frac{\Gamma(1/4+s/2)^2}{\Gamma(1/4)^2 s}x^{-s}ds.
\]
By~\cite[Lemme~6.1]{dlbTen}, the weight function $W_0(x)$ is real-valued, smooth and positive on $(0, +\infty)$, bounded as $x$ approaches $0$ and decays rapidly as $x\to +\infty$. More precisely, $W_0(x)$ satisfies $W_0(x)= 1 + O(x^{\frac{1}{2}-\varepsilon})$ for small $x$, and $W_0(x)\ll 1/(1+x^2)$ for large $x$.  
Consider the following sum: 
\[
S_{1}=\sum_{\substack{\chi\in \mathcal{H}^+\\ \chi\neq \chi_0}}|R(\chi)|^2 \mand S_{2}=\sum_{\substack{\chi\in \mathcal{H}^+\\ \chi\neq \chi_0}}\left|L\(\tfrac{1}{2}, \chi\)\right|^2|R(\chi)|^2,
\]
where $R(\chi)$ is a Dirichlet polynomial defined as follow. Let the set $\mathcal{M}$ be constructed following~\cite[Section~2.2]{dlbTen} provided that $\# \mathcal{M}\leq \hh$, where $\hh < q$ is to be optimised later.

Given a set $\widetilde{\cM}$ of representatives of the classes of $\mathcal{M}$ modulo $q$, for $m\in \widetilde{\cM}$, define
\[
r(m)^2=\sum_{\substack{n\in \mathcal{M}\\ n\equiv m \bmod{q}}}1.
\]
For $\chi\in \mathcal{H}^+$, the corresponding resonator is defined as \[
R(\chi)=\sum_{m\in \widetilde{\cM}}r(m)\chi(m).
\]
Applying the Cauchy--Schwarz inequality, we have \begin{equation}\label{boundR}
|R{(\chi_0)}|^2\leq \# \widetilde{\cM} \sum_{m\in \widetilde{\cM}}r(m)^2 \leq \min\{q-1, \hh\}\hh \ll \hh^2,
\end{equation}
since $\# \widetilde{\cM}\leq \min\{q-1, \hh\}$ and from the definition of coefficients of resonator,
\begin{equation}\label{average over local frequescies}
\sum_{m\in \widetilde{\cM}}r(m)^2=\sum_{m\in \widetilde{\cM}}\sum_{\substack{n\in \mathcal{M}\\ n\equiv m \bmod{q}}}1=\sum_{n\in \mathcal{M}}1=\# \mathcal{M} \leq \hh.
\end{equation}
Notice that\[
\max_{\substack{\chi\in \mathcal{H}^+\\ \chi\neq \chi_0}}\left|L\(\tfrac{1}{2}, \chi\)\right|^2\geq \frac{S_{2}}{S_{1}}.
\] 
Utilising the orthogonality relations for the full group and~\eqref{average over local frequescies}, we obtain the following bound for $S_{1}$ (we extend naturally here the definition of $R(\chi)$ to the full group of characters):
\begin{equation}
\begin{split}
\label{eq:S1ub--1}
  S_{1}&\leq \sum_{\substack{\chi\in \mathcal{H}^+}}|R(\chi)|^2\leq \sum_{\chi\in \mathcal{X}_q}|R(\chi)|^2\\
  & =\sum_{s, n \in \widetilde{\cM}}r(s)r(n) \sum_{\substack{\chi\Mod{q}}}\chi(s)\overline{\chi(n)}=(q-1) \sum_{\substack{s, n \in \widetilde{\cM}\\s\equiv n \Mod{q}}}r(s)r(n)\\
 & =(q-1)\sum_{n\in \widetilde{\cM}}r(n)^2\leq (q-1)\hh,
\end{split}
\end{equation}
where only $s=n$ survives as $q$ is a prime.

Utilising~\eqref{approximation} together with the orthogonality relations for the subgroup (see~\cite[Equation~(5.1)]{MunShp}), we obtain the following lower bound for $S_{2}$:
\begin{align*} 
 S_{2}&=2\sum_{\substack{\chi\in \mathcal{H}^+}}\sum_{\substack{k, \ell\geq 1\\ \gcd(k\ell, q)=1}}\frac{\chi(k)\overline{\chi(\ell)}}{\sqrt{k\ell}}W_0\left(\frac{\pi k\ell}{q}\right)|R(\chi)|^2\\
& \qquad \qquad \qquad \qquad +O\(\sum_{\substack{k, \ell\geq 1\\ \gcd(k\ell, q)=1}}\frac{1}{\sqrt{k\ell}}W_0\left(\frac{\pi k\ell}{q}\right)|R{(\chi_0)}|^2\)\\
&=H\sum_{\substack{\lambda\in \Ker \cH^+}}\sum_{\substack{k, \ell\geq 1\\ \gcd(k\ell, q)=1}}\frac{1}{\sqrt{k\ell}}W_0\left(\frac{\pi k\ell}{q}\right)\sum_{\substack{s, n \in \widetilde{\cM}\\sk\equiv  \pm\lambda n\ell  \Mod{q}}}r(s)r(n)\\
& \qquad \qquad \qquad \qquad \qquad \qquad \qquad \qquad \quad +O\left(|R{(\chi_0)}|^2 q^{1/2}\log q\right)\\
&\gg H\sum_{\substack{k, \ell\leq \sqrt{q}\\ \gcd(k\ell, q)=1}}\frac{1}{\sqrt{k\ell}}\sum_{\substack{s, n \in \widetilde{\cM}\\sk\equiv  n\ell  \Mod{q}}}r(s)r(n)+O\left(|R{(\chi_0)}|^2 q^{1/2}\log q\right),
\end{align*}
since the decay of $W_0$ implies that
\[
\sum_{\substack{k, \ell\geq 1\\ \gcd(k\ell, q)=1}}\frac{1}{\sqrt{k\ell}}W_0\left(\frac{\pi k\ell}{q}\right)\ll q^{1/2}\log q.
\]
Here, we have utilised the positivity of the resonator coefficients as well as that of $W_0(x)$, along with the properties of $W_0(x)$ recalled at the beginning of the proof. Furthermore, due to this positivity, we discarded the contributions from terms with $\lambda\neq 1$ in the inner sums.

From the relation $sk\equiv n\ell \Mod{q}$, observe that
\[
r(s)r(n)\geq \min\{r(s)^2, r(n)^2\}\geq \sum_{\substack{i, j\in \mathcal{M}\\ik=j\ell\\ i\equiv s\bmod q\\ j\equiv n \bmod q}}1.
\]
Using the above estimates and~\eqref{boundR}, we obtain 
\begin{equation}
\begin{split}
\label{eq:S2-gcd-sum}
 S_{2}&\gg H \sum_{\substack{k, \ell\leq \sqrt{q}\\ \gcd (k\ell, q)=1}}\frac{1}{\sqrt{k\ell}}\sum_{\substack{i, j\in \mathcal{M}\\ik=j\ell}}1+O\left(\hh^2 q^{1/2} \log q\right)\\ 
&\gg H \sum_{\substack{i, j\in \mathcal{M}\\i/\gcd(i, j), \, j/\gcd(i, j)\leq q^{1/2}\\ \gcd(ij, q)=1}}
 \(\frac{\gcd(i, j)}{\lcm[i, j]}\)^{1/2}+O\left( \hh^2 q^{1/2} \log q\right). 
\end{split}
\end{equation}
Next, we observe that for $i, j\in \mathcal{M}$ outside of the summation range in the last sum
in~\eqref{eq:S2-gcd-sum} we have $\lcm[i,j]/\gcd(i,j)>q^{1/2}$ and hence  
\[
 \(\frac{\gcd(i, j)}{\lcm[i, j]}\)^{1/2} < \frac{1}{q^{1/12}} \(\frac{\gcd(i, j)}{\lcm[i, j]}\)^{1/3}.
\]
Also note that if $\hh<q$ then  the coprimality condition can be removed since the largest prime factor $y_\cM$ of elements of $\mathcal{M}$ for some fixed $\gamma<1$ can be bounded as 
\begin{equation}
\label{eq:yM}
y_\cM \ll \exp\((\log_2 h)^\gamma\) ( \log h) \log_2 h \le(\log h)^{1+o(1)}, 
\end{equation}
see that construction of $\cM$ in~\cite[Section~2.2]{dlbTen} and in particular~\cite[Equations~(2.1)--(2.3)]{dlbTen}.
Therefore
\begin{equation}
\begin{split}
\label{eq:Split}
& \sum_{\substack{i, j\in \mathcal{M}\\i/\gcd(i, j), \, j/\gcd(i, j)\leq q^{1/2}\\ \gcd(ij, q)=1}} \(\frac{\gcd(i, j)}{\lcm[i, j]}\)^{1/2}\\
& \qquad \qquad\quad =  \sum_{\substack{i, j\in \mathcal{M}\\i/\gcd(i, j), \, j/\gcd(i, j)\leq q^{1/2}}}\ \(\frac{\gcd(i, j)}{\lcm[i, j]}\)^{1/2}\\
&\qquad \qquad \quad \ge  \sum_{\substack{i, j\in \mathcal{M}}} \(\frac{\gcd(i, j)}{\lcm[i, j]}\)^{1/2}
-\frac{1}{q^{1/{12}}}
\sum_{\substack{i, j\in \mathcal{M}}}\(\frac{\gcd(i, j)}{\lcm[i, j]}\)^{1/3}  .
\end{split}
\end{equation}  

It is shown in~\cite[Page~128]{dlbTen} (in particular, see the  bound on $S_{1/3}(\cM)$ with 
$y_\cM$ as in~\eqref{eq:yM}) that 
\begin{equation}
\label{eq:S 1/3}
\sum_{\substack{i, j\in \mathcal{M}}}\(\frac{\gcd(i, j)}{\lcm[i, j]}\)^{1/3} \ll h\exp\(y_\cM^{2/3}\) 
\ll h\exp((\log h)^{2/3+o(1)}),
\end{equation}  
while by~\cite[Th\'{e}or\`{e}me~1.1]{dlbTen} to the first sum, we conclude that
\begin{equation}
\label{eq:S 1/2}
  \sum_{\substack{i, j\in \mathcal{M}}} \(\frac{\gcd(i, j)}{\lcm[i, j]}\)^{1/2}
    \ge \hh\exp\left((2\sqrt{2}+o(1))\sqrt{\frac{(\log \hh) \log_3 \hh}{\log_2 \hh}}\right).
\end{equation}  
Since $h < q$,  we see  that the upper bound~\eqref{eq:S 1/3} is negligible compared 
to  the lower bound~\eqref{eq:S 1/2} and thus we see from~\eqref{eq:Split} that 
\[
 \sum_{\substack{i, j\in \mathcal{M}\\i/\gcd(i, j), \, j/\gcd(i, j)\leq q^{1/2}\\ \gcd(ij, q)=1}}\sqrt{\frac{\gcd(i, j)}{\lcm[i, j]}} 
 \ge \hh\exp\left((2\sqrt{2}+o(1))\sqrt{\frac{(\log \hh) \log_3 \hh}{\log_2 \hh}}\right), 
 \]
 which after substitution in~\eqref{eq:S2-gcd-sum} yields 
 \begin{equation}
\label{eq:S2-123123123123123}
S_{2}\gg H \hh\exp\left((2\sqrt{2}+o(1))\sqrt{\frac{(\log \hh) \log_3 \hh}{\log_2 \hh}}\right)+O\left(
\hh^2 q^{1/2}  \log q\right). 
\end{equation}

Recall that $q > H\ge q^{1/2 + \varepsilon}$. 
We choose $\hh=H/\sqrt{q}$. With this choice, we have 
\[
H \hh\exp\left((2\sqrt{2}+o(1))\sqrt{\frac{(\log \hh) \log_3 \hh}{\log_2 \hh}}\right)\gg  \frac{H^2}{q^{1/2}}\exp\left(c(\varepsilon) \sqrt{\frac{\log q \log_3 q}{\log_2 q}}\right)
\]
for an absolute constant $c(\varepsilon) >0$, depending only on $\varepsilon$,  which dominates 
\[
\hh^2q^{1/2}\log{q} = \frac{H^2(\log{q})}{q^{1/2}}. 
\]
Hence by~\eqref{eq:S2-123123123123123}
and ~\eqref{eq:S1ub--1} 
we conclude that 
\[
\max_{\substack{\chi\in \mathcal{H}^+\\ \chi\neq \chi_0}}\left|L\(\tfrac{1}{2}, \chi\)\right|^2\geq \frac{S_{2}}{S_{1}}\gg \frac{H}{q}\exp\left((2\sqrt{2}+o(1))\sqrt{\frac{(\log \hh) \log_3 \hh}{\log_2 \hh}}\right).
\]  
Since for $q \ge H\ge q^{1/2 + \varepsilon}$ we have $\log_2 \hh = (1+o(1))\log_2 H$, this 
 completes the proof.

\section{Distribution of small powers of primitive roots}   
\label{sec:Appl} 

We now present some key consequences of Theorem~\ref{thm:MeanVal}.

Let $g$ be a fixed primitive root modulo $q$. 
Given integers $a$, $H$ and $N$, we denote by $f(a,H, N)$ the number 
of integers $n\in [1, N]$  such that $g^n \equiv a + h \bmod q$ for some integer $h\in [1,H]$.

Montgomery~\cite[Theorem~2]{Mont2} has introduced and estimated the variance
\[
V(H,N) =\frac{1}{q} \sum_{a=0}^{q-1} \(f(a,H, N)-\frac{HN}{q}\)^2
\]
of this quantity. In   the extreme regime when $H = O(q/N)$,  the expected value of 
$f(a,H, N)$ is $HN/q = O(1)$, and it is perhaps very difficult to say anything 
meaningful about  $V(H,N)$. However, 
by~\cite[Theorem~2]{Mont2} we have  $V(H,N) = (1 + o(1)) HN$ as $q\to \infty$ 
provided that
$HN \ge  q^{1+ \varepsilon}$ and $q \ge N\ge q^{5/7 + \varepsilon}$ for some fixed $\varepsilon > 0$. 
The result has been improved on average over $g$, see~\cite[Theorem~1]{CoGoZa}
and~\cite[Theorem~15.6]{KoSh} for various regimes of $H$ and $N$. 
Cobeli,  Gonek and Zaharescu~\cite[Theorem~2]{CoGoZa} have also shown that 
the range $N\ge q^{19/27 + \varepsilon}$ is admissible for almost all primes $q$.

One new range, which applies to all primes, improves both~\cite[Theorem~2]{Mont2} 
and  its ``almost all'' version~\cite[Theorem~2]{CoGoZa}. 

\begin{cor}
	\label{cor:MeanVal}
	Assume that  $HN   \ge  q^{1+ \varepsilon}$  and  $q> N\ge q^{2/3 + \varepsilon}$ with some fixed $\varepsilon > 0$. 
	Then  $V(H,N) = (1 + o(1)) HN$ as $q\to \infty$. 
\end{cor}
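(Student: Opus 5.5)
We will follow Montgomery's argument~\cite[Theorem~2]{Mont2} verbatim, replacing his bound~\eqref{eq:MontBound} by Theorem~\ref{thm:MeanVal}. The first step is to expand $f(a,H,N)$ in multiplicative characters. Writing $\mathbf 1[g^n\equiv a+h]=\frac{1}{q-1}\sum_{\chi\in\cX_q}\chi(g^n)\overline\chi(a+h)$ (for $a+h\not\equiv 0$), the principal character produces the main term $HN/q$ up to $O(N/q+1)$. The remaining part is
\[
\frac{1}{q-1}\sum_{\chi\ne\chi_0}\Bigl(\sum_{n\le N}\chi(g)^n\Bigr)\Bigl(\sum_{h\le H}\overline\chi(a+h)\Bigr).
\]
Squaring, averaging over $a$ and using orthogonality of additive shifts (that is, Parseval for the $\Z_q$-Fourier transform of $\overline\chi(\cdot)$ restricted to shifted intervals), we express $V(H,N)$ as a quadratic form in the geometric sums $G_\chi=\sum_{n\le N}\chi(g)^n$. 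Its kernel is $\sum_a\bigl(\sum_h\overline{\chi_1}(a+h)\bigr)\bigl(\sum_h\chi_2(a+h)\bigr)$, a Jacobi-type correlation. Montgomery shows that the diagonal $\chi_1=\chi_2$ contributes $(1+o(1))HN$ when $HN\ge q^{1+\varepsilon}$. This is the standard variance $\frac1{q}\sum_{\chi\neq\chi_0}|G_\chi|^2\cdot\frac{H(q-H)}{q-1}$, combined with the identity $\sum_\chi|G_\chi|^2=(q-1)N$ for $N<q$.

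The second step is to bound the off-diagonal contribution. Parametrising $\chi=\psi^{j}$ with $\psi(g)=e(1/(q-1))$, we have $G_{\psi^j}$ equal to a geometric sum $\ll \min\{N,\|j/(q-1)\|^{-1}\}$. As in Montgomery, the off-diagonal then reduces to averages of the incomplete character sums $\sum_{h\le H}\psi^{j}(a+h)$, or dually to $\sum_{n\le N}\psi^{j}(n)$, over $j$ running in intervals of length $A\approx q/N$. These are exactly $M(\cI,N')$ for intervals $\cI=\{\psi^{j_0}\psi^a:~a\le A\}$. After translating by $\psi^{j_0}$, which is absorbed into the coefficients $\alpha_n$, such an interval has $\#(\cI\cdot\cI)\le 2A$, so Theorem~\ref{thm:MeanVal} applies with $K=2$ and gives
\[
M(\balpha,\cI,N')\le \bigl(N'^{1/2}+A^{-1/2}N'+A^{-3/8}N'^{1/2}q^{1/4}\bigr)q^{o(1)}.
\]
We then insert this wherever Montgomery uses~\eqref{eq:MontBound}, using dyadic decomposition in $j$ with the geometric-sum weights.

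The third and main step is the bookkeeping of exponents, and we expect it to be the main obstacle: we must check that the error terms are $o(HN)$ exactly when $N\ge q^{2/3+\varepsilon}$. Heuristically, with $A\asymp q/N\le q^{1/3}$, the $N'^{1/2}$ and $A^{-1/2}N'$ terms are harmless given $HN\ge q^{1+\varepsilon}$. The critical term is $A^{-3/8}N'^{1/2}q^{1/4}$, which relative to $N'^{1/2}$ costs a factor $q^{1/4}A^{-3/8}$; this is acceptable precisely when the relevant character sum lengths, of order up to $N$, beat $q^{2/3}$. That threshold matches the remark after Theorem~\ref{thm:MeanVal} that for $N\ge q^{2/3}$ the bound simplifies to $K^{1/2}(N^{1/2}+A^{-1/2}N)q^{o(1)}$. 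We would first try to arrange Montgomery's decomposition so that the sums fed into Theorem~\ref{thm:MeanVal} have length $N$ itself, rather than $H$, allowing us to invoke that simplified form. The $q^{o(1)}$ losses are then absorbed by the $q^{\varepsilon}$ slack in both hypotheses.
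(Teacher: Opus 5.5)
\textbf{There is a genuine gap.} Your route is the one the paper indicates: it too only asserts that one inserts Theorem~\ref{thm:MeanVal} into Montgomery's argument. The problem is the step you postpone, the exponent bookkeeping. That is where the argument breaks, and your heuristic for it takes the character sums to have the wrong length.

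\textbf{The direct form.} In the reduction you describe, for nonprincipal $\chi_1\neq\chi_2$ the kernel $\sum_a\bigl(\sum_{h\le H}\overline{\chi_1}(a+h)\bigr)\bigl(\sum_{h\le H}\chi_2(a+h)\bigr)$ equals $J\cdot D(\overline{\chi_1}\chi_2)$. Here $J$ is a Jacobi sum with $|J|=q^{1/2}$, and $D(\psi)=\sum_{0<|d|<H}(H-|d|)\psi(d)$.
\begin{itemize}
\item Combined with $\sum_\chi|G_\chi G_{\chi\psi^\ell}|\ll q\log q\,\min\{N,q/|\ell|\}$, this bounds the off-diagonal part of $\sum_a(f(a,H,N)-HN/q)^2$ by $q^{1/2+o(1)}H\max_A M(\balpha,\cI_A,H)$. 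Here $\cI_A$ runs over intervals of $A\gg q/N$ characters.
\item So you need $M(\balpha,\cI_A,H)\le Nq^{-1/2-\varepsilon}$. The sums have length $H$, not $N$. The only sums over $n\le N$ are the geometric sums $G_\chi=\sum_{n\le N}\chi(g)^n$, and these only produce the weights and the interval length $A\asymp q/N$.
\item At the critical block $A\asymp q/N\le q^{1/3}$, with $H$ as small as $q^{1+\varepsilon}/N$, neither $A$ nor $H$ reaches $q^{2/3}$. So the simplified form of Theorem~\ref{thm:MeanVal} is unavailable.
\item The third term there is $(N/q)^{3/8}H^{1/2}q^{1/4}$. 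It is at most $Nq^{-1/2-\varepsilon}$ only if $H\le N^{5/4}q^{-3/4-2\varepsilon}$. This is compatible with $HN\ge q^{1+\varepsilon}$ only for $N\ge q^{7/9+4\varepsilon/3}$.
\item In this regime Theorem~\ref{thm:MeanVal} is actually weaker than~\eqref{eq:MontBound}: for $A=H=q^{1/3}$ it gives $q^{7/24}$ against $q^{1/4}$. Used in place of~\eqref{eq:MontBound}, it does not even recover $5/7$.
\end{itemize}

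\textbf{The dual form.} Completing the $d$-sum with Gauss sums gives $|D(\psi)|=q^{-1/2}\bigl|\sum_k\overline{\psi}(k)|S_H(k)|^2\bigr|$, where $S_H(k)=\sum_{h\le H}\exp(2\pi i kh/q)$. These are sums of length $K\asymp q/H\le Nq^{-\varepsilon}$ with weight about $H^2$. The requirement becomes $M(\balpha,\cI_A,K)\le (NK/q)\,q^{-\varepsilon}$. The term $A^{-1/2}K$, which you call harmless, exceeds this by a factor $(q/N)^{1/2}$ at $A\asymp q/N$.

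In either form, the first term ($H^{1/2}$, respectively $K^{1/2}$) forces $H\le N^2q^{-1-\varepsilon}$. So this absolute-value method cannot cover larger $H$ at all, even under GRH.

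\textbf{What is missing.} Reaching $N\ge q^{2/3+\varepsilon}$ needs something beyond the substitution. One option is a mean value bound, for intervals of about $q/N$ characters and sums of length $H$, that beats~\eqref{eq:MontBound}. Another is to exploit cancellation among the Jacobi sums and the $G_\chi$ instead of taking absolute values.

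\textbf{Smaller points.}
\begin{itemize}
\item With the paper's normalisation $V=\frac1q\sum_a$, the diagonal is about $HN/q$; your formula is for $\sum_a$.
\item The diagonal equals $HN(1-H/q)(1-N/q)$ up to lower-order terms. So $(1+o(1))HN$ requires $H,N=o(q)$; it does not follow from $HN\ge q^{1+\varepsilon}$.
\end{itemize}
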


Next,  we consider the distribution of difference between elements 
of the sequence $g^n \bmod q$, $n =1, \ldots, N$, 
which has also been studied in a number of works, see, for example,~\cite{RuZa, ShpYau, VaZa}. 

Namely, we fix an interval $\cJ = [\alpha, \alpha + \gamma] \in [0,1]$ of length $\gamma$ and as in~\cite{RuZa} we  denote by $R_2(\cJ,N)$ the pair correlation 
function. More precisely, 
\begin{align*} 
R_2(\cJ,N) = \frac{1}{N} \#\bigl\{(m,n) :~1 \le m& \ne n \le N\\
&   g^m-g^n  \equiv  h \bmod q, \ h \in \cH\bigr\}, 
\end{align*} 
where  
\[
\cH = \frac{q}{H} \cdot \cJ =  \left[\alpha \frac{q}{H} , \(\alpha + \gamma\) \frac{q}{H} \right] .
\]
We say that the sequence $g^n \bmod q$, $n =1, \ldots, N$, has a Poissonian distribution 
of spacings if uniformly over $\alpha$ and $\gamma$ we have 
\begin{equation}\label{eq:PoissonCorr}  
R_2(\cJ,N)  = \gamma + o(1), \qquad  \text{as} \ q \to \infty.
\end{equation}  

It is easy to see that inserting the bound of Theorem~\ref{thm:MeanVal} in the argument 
of the proof of~\cite[Theorem~2]{Mont}, we derive the following improvement unconditionally, 
which in~\cite{Mont} is given under the GRH. 

We remark, that the approach of Montgomery~\cite{Mont} gives upper bounds on $V(H,N)$ in 
other regimes of $H$ and $N$, see~\cite[Equation~(15.3)]{KoSh}. Certainly using 
Theorem~\ref{thm:MeanVal} improves these bounds as well, which in turn have several other 
applications, we refer to~\cite[Chapter~15]{KoSh}.  

Furthermore, using the bound~\eqref{eq:MontBound}, Rudnick and Zaharescu~\cite[Theorem~3(i)]{RuZa}, have established~\eqref{eq:PoissonCorr}  
$N\ge q^{5/7 + \varepsilon}$ for some fixed $\varepsilon > 0$. 
In turn, Theorem~\ref{thm:MeanVal} immediately implies the following. 

\begin{cor}
	\label{cor:PoissonCor}
	For  $N\ge q^{2/3 + \varepsilon}$ with some fixed $\varepsilon > 0$ we have~\eqref{eq:PoissonCorr}. 
\end{cor}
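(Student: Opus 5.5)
We will follow the proof of \cite[Theorem~3(i)]{RuZa} verbatim, replacing their use of Montgomery's bound~\eqref{eq:MontBound} by Theorem~\ref{thm:MeanVal}. Recall the structure of their argument. Let $\Gamma=\{g^n \bmod q:~1\le n\le N\}$ and let $w$ be a smooth or sharp cutoff detecting $h\in\cH$. Write $R_2(\cJ,N)$ as $N^{-1}$ times a sum over pairs $(m,n)$. Expand the condition $g^m-g^n\equiv h \bmod q$ with additive characters $\e_q(\cdot)$, and then pass to multiplicative characters via Gauss sums. Equivalently, one detects $x\in\Gamma$ through $\mathbf 1_{\Gamma}(x)=\frac{1}{q-1}\sum_{\chi\in\cX_q}\overline\chi(x)\sum_{n\le N}\chi(g^n)$.

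The characters are naturally indexed as $\chi=\psi^a$, where $\psi$ is a generator of $\cX_q$ with $\psi(g)=\e_{q-1}(1)$. The inner geometric sum $\sum_{n\le N}\psi^a(g^n)$ is then $\ll \min\{N,(q-1)/|a|\}$.

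The main term, coming from the trivial character (or from $a=0$), produces $\gamma+o(1)$ exactly as in \cite{RuZa}. The error term reduces, after dyadic decomposition in $|a|\sim A$, to bounding averages of the shape
\[
\frac{1}{A}\sum_{|a|\sim A}\Bigl|\sum_{h\le q/H}\beta_h\,\psi^a(h)\Bigr|
\]
(or of $\sum_{x\le N}$ variants), weighted by $\min\{N,q/A\}$. This is precisely $M(\balpha,\cI,\cdot)$ over an interval of characters $\cI=\{\psi^a:~a\sim A\}$. Rudnick and Zaharescu bounded this by~\eqref{eq:MontBound}, which forces $N\ge q^{5/7+\varepsilon}$.

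Intervals satisfy $\#(\cI\cdot\cI)\le 2\#\cI$, so Theorem~\ref{thm:MeanVal} applies with $K=2$ and gives
\[
M\ll \bigl(L^{1/2}+A^{-1/2}L+A^{-3/8}L^{1/2}q^{1/4}\bigr)q^{o(1)},
\]
where $L$ is the length of the sum. We substitute this into each dyadic range and sum over $A\le q$. The term $L^{1/2}$ gives square-root cancellation. The term $A^{-1/2}L$ is harmless except for small $A$, and there one uses the factor $\min\{N,q/A\}$ together with the trivial bound. The remaining task is to check that the total error is $o(N)$ once $N\ge q^{2/3+\varepsilon}$.

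The main obstacle is the bookkeeping of the term $A^{-3/8}L^{1/2}q^{1/4}$ against the weight $\min\{N,q/A\}$. The worst case is $A\approx q/N$. I would verify that at this $A$ the contribution is roughly $N\cdot (N/q)^{3/8}q^{1/4}L^{1/2}/N$, and then compare it with the threshold imposed by the required bound $o(N)$; this comparison should be exactly where $N\ge q^{2/3+\varepsilon}$ arises. This is consistent with the remark after Theorem~\ref{thm:MeanVal}: for $N\ge q^{2/3}$ the third term is dominated by the others. If the computation is tight, I would instead average over the interval length more carefully, as in \cite{RuZa}, keeping the $a$-sum in its original smoothed form.
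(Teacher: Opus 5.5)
You follow the same route as the paper, whose whole justification is that Theorem~\ref{thm:MeanVal} can be substituted for \eqref{eq:MontBound} in the proof of \cite[Theorem~3(i)]{RuZa}. However, the one step you leave to be ``verified'' is exactly where this substitution fails. Your appeal to the remark after Theorem~\ref{thm:MeanVal} misreads it. That remark needs the \emph{length of the character sum}, or the number of characters, to be at least $q^{2/3}$. Here the multiplicative character sums run over $\cH$, of length $L\asymp \gamma q/N\le q^{1/3}$; this reads $q/H$ in the definition of $\cH$ as $q/N$, which the normalisation $R_2\to\gamma$ forces. The critical family consists of $A\asymp q/N\le q^{1/3}$ consecutive characters. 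Neither hypothesis of the remark holds, so the third term is not dominated.

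Here is the computation you deferred. Write $\mathbf{1}_\Gamma=(q-1)^{-1}\sum_k c_k\chi^k$, with $\chi(g)=\exp(2\pi i/(q-1))$ and $|c_k|\ll\min\{N,q/|k|\}$. Evaluate the $y$-sum in $\sum_{h\in\cH}\sum_y\mathbf{1}_\Gamma(y)\mathbf{1}_\Gamma(y+h)$ by Jacobi sums, which generically have modulus $\sqrt q$. One finds
\[
N R_2(\cJ,N)=\gamma N+O(\log q)+O\Bigl(q^{-3/2}\sum_{j\ne 0}W(j)\,|S_j|\Bigr), \qquad S_j=\sum_{h\in\cH}\chi^j(h),
\]
where $W(j)=\sum_k|c_k||c_{j-k}|\ll\min\{qN,\,q^2(\log q)/|j|\}$, and this is sharp for $|j|\ll q/N$.

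Consequences:
\begin{enumerate}
\item Each dyadic block of $j$, in particular $0<|j|\le q/N$, costs about $q^{1/2}M$, where $M$ is the mean of $|S_j|$ over the block. One therefore needs $M\le Nq^{-1/2-\varepsilon}$ with $A\asymp L\asymp q/N$.
\item As a check on this normalisation: \eqref{eq:MontBound} gives exactly $N\ge q^{5/7+\varepsilon}$, since both $A^{-1/4}L$ and $A^{-1/8}q^{1/4}$ are critical there. The GRH bound $|S_j|\le L^{1/2}q^{o(1)}$ gives $N\ge q^{2/3+\varepsilon}$. Both match \cite[Theorem~3]{RuZa}.
\item With Theorem~\ref{thm:MeanVal}, the terms $L^{1/2}+A^{-1/2}L\asymp(q/N)^{1/2}$ are fine once $N\ge q^{2/3+\varepsilon}$.
\item The third term, however, is $A^{-3/8}L^{1/2}q^{1/4}\asymp(q/N)^{1/8}q^{1/4}$. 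The inequality $q^{1/2}(q/N)^{1/8}q^{1/4}\le N$ holds only for $N\ge q^{7/9}$.
\item In fact, for $A\asymp L\le q^{2/5}$ Theorem~\ref{thm:MeanVal} is weaker than \eqref{eq:MontBound}. The substitution therefore does not even recover $5/7$.
\end{enumerate}

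Switching to the dual formulation, with sums of length $\asymp N$ over additive frequencies (your ``$\sum_{x\le N}$ variants''), does not rescue the argument. It makes the third term harmless. But one then needs $M\le N^2q^{-1-\varepsilon}$, and the term $A^{-1/2}N$ violates this whenever $A\lesssim q^2/N^2$, in particular at $A\asymp q/N$.

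What is missing is a square-root bound $M\ll L^{1/2}q^{o(1)}$ for blocks of about $q/N$ consecutive characters and sums of length about $q/N$. This is precisely what GRH supplies pointwise. Your fallback of averaging ``more carefully'' does not provide it, and neither does the paper's one-line justification.
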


We note that Corollary~\ref{cor:PoissonCor} achieves the same strength as the conditional 
on the GRH result of~\cite[Theorem~3(ii)]{RuZa} and the result using additional averaging 
over the primitive root $g$ of~\cite[Theorem~4]{RuZa}.

\section*{Acknowledgement}

During the preparation of this work, P.D., B.K.  and I.S. 
were supported by the  
Australian Research Council Grant  DP230100534, 
and M.M.  by AAP Recherche 2025 UJM ``Comportements al{\'e}atoires en arithm{\'e}tique'' 
and by the French National Research Agency (ANR) under project number ANR-25-CE40-1961-01.


\begin{thebibliography}{10}


\bibitem{AMM}  C. Aistleitner, K.  Mahatab and M. Munsch
`Extreme values of the {R}iemann zeta function on the $1$-line', {\it Int. Math. Res. Not.\/}, {\bf 22} (2019), 6924--6932. 


\bibitem{AMMP}  C. Aistleitner, K.  Mahatab, M. Munsch and A. Peyrot,  
`On large values of $L(\sigma,\chi)$', {\it  Quart. J. Math.\/}, {\bf 70} (2019), 831--848. 

 
\bibitem{ArCr} L.-P. Arguin and N.  Creighton, 
`Upper bounds on large deviations of Dirichlet $L$-functions in the $q$-aspect',
 {\it J. Number Theory\/}, {\bf 273} (2025) 96--158.
 
 

 
\bibitem{BKT} 
A.  Bainbridge, R. Khan and Z.  S. Tang, 
`On moments of $L$-functions over Dirichlet characters', 
{\it Can. Math. Bull.\/}, (to appear).


\bibitem{BFKMM1} V. Blomer, {\'E}. Fouvry, E. Kowalski, P. Michel and D. Mili{\'c}evi{\'c},
 `On moments of twisted $L$-functions',
 {\it Amer. J.  Math.\/}, {\bf 139} (2017), 707--768.


\bibitem{BFKMM2} V. Blomer, {\'E}. Fouvry, E. Kowalski, P. Michel and D. Mili{\'c}evi{\'c},
 `Some applications of smooth bilinear forms with Kloosterman sums',
  {\it Proc. Steklov Math. Inst.\/},  {\bf 296} (2017), 18--29.
  
\bibitem{BoSe} A.  Bondarenko and K. Seip,  
`Extreme values of the Riemann zeta function and its argument',
  {\it Math. Ann.\/},  {\bf 372} (2018),   999--1015. 
  
 
    \bibitem{BKS} J. Bourgain, S. V. Konyagin and I. E. Shparlinski, `Product sets of rationals, multiplicative translates of subgroups in residue rings, and fixed points of the discrete logarithm', {\it Int. Math. Res. Not.\/}, {\bf 2008} (2008), Article ID rnn090.
 

  \bibitem{CillGar} J. Cilleruelo and M. Z. Garaev, 
  `Congruences involving product of intervals and sets with small multiplicative doubling modulo a prime and applications', 
  {\it Math. Proc. Camb. Phil. Soc.\/}, {\bf 160} (2016), 477--494. 
  
  
  \bibitem{CoGoZa} C. Cobeli, S.  Gonek and
  A. Zaharescu, `On the distribution
  of small powers  of a primitive root',
  {\it J. Number Theory\/}, {\bf 88} (2001),  49--58.

\bibitem{DDLL} P.  Darbar,  C. David, M. Lalin and A.  Lumley, 
`Asymmetric distribution of extreme values of cubic $L$-functions at 
$s=1$', {\it J. Lond. Math. Soc.\/}, {\bf 110} (2024), Art.~12996.

\bibitem{DM2025} P. Darbar and G. Maiti, `Large values of quadratic Dirichlet $L$-functions",  {\it Mathematische Annalen\/}, {\bf 392} (2025), 4573--4605.

\bibitem{Dav} H. Davenport, {\it Multiplicative Number Theory\/},
 Springer, New York, 2000.

\bibitem{dlbTen} 
R. de la Bret\`{e}che and G. Tenenbaum, `Sommes de G\'{a}l et applications', 
{\it Proc. Lond. Math. Soc.\/}, {\bf 119} (2019), 104--134.

 \bibitem{DWZ} Z. Dong,  W. Wang and H. Zhang,
`Distribution of Dirichlet $L$-functions', 
{\it Mathematika\/}, {\bf  69} (2023),  719--750. 


\bibitem{Erm} I. Ermoshin,
`Large central values of Dirichlet $L$-functions in cosets', 
{\it Preprint \/}, 2025, available from \url{https://arxiv.org/abs/2504.05890}.


\bibitem{Ford} K. Ford,  `The distribution of integers with a divisor in a given interval',
{\it  Annals Math.\/}, {\bf 168}  (2008), 367--433.

\bibitem{FKM} 
{\'E}. Fouvry, E. Kowalski and Ph. Michel,
`Toroidal families and averages of $L$-functions,~I',
{\it Acta Arith.\/}, {\bf  214} (2024), 109--142. 
 

\bibitem{GaYo} B. Garcia  and M. P. Young, 
`Asymptotic second moment of Dirichlet $L$-functions along a thin coset',  
{\it Forum  Math., Sigma\/}, {\bf  13} (2025),   Art.~e83., 1--19.

\bibitem{GrRu}
B. Green and I. Ruzsa, `Counting sumsets and sum-free sets modulo a prime',  
{\it Studia Sci. Math. Hungar.\/}, {\bf  41} (2004), 285--293.


\bibitem{GrSo-0}
A. Granville and K. Soundararajan, 
`Large character sums', 
{\it J. Amer. Math. Soc.\/}, {\bf 14} (2001), 365--397.

 
\bibitem{GrSo-2}
A. Granville and K. Soundararajan, 
`The distribution of values of $L(1, \chi_d)$', {\it Geom. Funct. Anal.\/}, {\bf 13} (2003), 992--1028.


\bibitem{GrSo-3} A. Granville and K. Soundararajan,
`Extreme values of $\vert \zeta(1 + it)\vert$', {\it  The Riemann
zeta function and related themes: Papers in honour of Professor K. Ramachandra},
Ramanujan Math. Soc. Lect. Notes Ser., vol.~2 , 2006, 65--80.


\bibitem{HeLi} W. Heap and J. Li, `Simultaneous extreme values of zeta and $L$-functions',  {\it Mathematische Annalen\/}, {\bf 390} (2024), 6355--6397.

\bibitem{H-B} D. R. Heath-Brown, `A large values estimate for Dirichlet
polynomials',  {\it J. London Math. Soc.\/}, {\bf  20} (1979), 8--18.

\bibitem{IwKow} H. Iwaniec and E. Kowalski,
{\it Analytic number theory\/}, Amer.  Math.  Soc.,
Providence, RI, 2004. 


\bibitem{KLM} 
R. Khan, R. Lei and D. Milic{\'e}vi{\'c}, `Effective moments of Dirichlet $L$-functions in Galois orbits', 
 {\it Int. Math. Res. Not.\/}, {\bf 12} (2019), 475--490.   
 
\bibitem{KMN1} 
R. Khan, D. Milic{\'e}vi{\'c} and H. T. Ngo, `Non-vanishing of Dirichlet $L$-functions in Galois orbits', 
 {\it Int. Math. Res. Not.\/}, {\bf 2016} (2016), 6955--6978. 
 
 \bibitem{KoSh} S. V. Konyagin and  I. E.  Shparlinski,
 {\it Character sums with  exponential functions and their applications\/}, Cambridge Univ. Press, Cambridge, 1999.  

 \bibitem{KMN2} 
R. Khan, D. Milic{\'e}vi{\'c} and H. T. Ngo, `Non-vanishing of Dirichlet $L$-functions, II', {\it Math. Z.\/}, {\bf  300} (2022), 1603--1613. 

\bibitem{Kouk}
D. Koukoulopoulos, `Pretentious multiplicative functions and the prime number 
theorem for arithmetic progressions', {\it Compositio Math.\/},  {\bf 149} (2013), 1129--1149.

\bibitem{Lam0} Y. Lamzouri, `The Two-Dimensional Distribution of Values of $\zeta(1+it)$, {\it Int. Math. Res. Not.\/}, {\bf 2008} (2008), Article ID rnn~106. 

\bibitem{Lam1} Y. Lamzouri, `On the distribution of extreme values of zeta and $L$-functions in the strip
$1/2<\sigma<1$', {\it  Int. Math. Res. Not.\/}, {\bf 2011} (2011), 5449--5503. 
 
 
 \bibitem{Lam2} Y. Lamzouri, `Large values of $L(1,\chi)$ for $k$-th order characters $\chi$ and applications to character sums',  {\it Mathematika\/}, {\bf 63} (2017), 53--71.  
 
 \bibitem{LeLe}
S. H. Lee and S. Lee,
`The twisted moments and distribution of values of Dirichlet $L$-functions at $1$',
 {\it J. Number Theory\/}, {\bf 197} (2019), 168--184.
 
  \bibitem{Lou1} S. Louboutin, `Dedekind sums and mean square value of $L(1, \chi)$ over subgroups', 
{\it Publ. Math. Debrecen\/}, (to appear).


 \bibitem{Lou2} S. Louboutin, `An asymptotic on the logarithms of the relative
class numbers of imaginary abelian number fields of
prime conductor and large degree', 
{\it Preprint\/}, 2025, available  from \url{https://arxiv.org/abs/2501.18941}. 

\bibitem{LoMu1} S. Louboutin and M. Munsch,  `Second moment of Dirichlet $L$-functions, character sums over subgroups and upper bounds on relative class numbers', {\it Quart. J. Math.\/}, {\bf 72} (2021), 1379--1399.

\bibitem{LoMu2} S. Louboutin and M. Munsch, `Mean square values of $L$-functions over subgroups for non primitive characters, 
Dedekind sums and bounds on relative class numbers', {\it Canad. J. Math.\/}, {\bf 75} (2023), 1711--1743. 

\bibitem{Mont}
H.~L. Montgomery, {\it Topics in multiplicative number theory\/}, 
Lecture Notes in Math., Vol.~227, Springer-Verlag, Berlin-New
  York, 1971.
  
  \bibitem{Mont2} 
  H. L. Montgomery, `Distribution of small powers of a
  primitive root', {\it Advances in Number Theory\/}, 
  Clarendon Press, Oxford, 1993, 137--149.
  
  \bibitem{MunShp}
M. Munsch and  I. E. Shparlinski, 
`Moments and non-vanishing of $L$-functions over thin subgroups', 
{\it Forum Math.\/},  (to appear).  

 \bibitem{RuZa} Z. Rudnick and A. Zaharescu, `The distribution of
spacings between small powers of a primitive root',
{\it Israel J. Math.\/}, {\bf  120} (2000), 271--287.

\bibitem{Shp} I. E. Shparlinski, 
`Ratios of small integers in multiplicative subgroups of residue rings',  
{\it Exp. Math.\/}, {\bf 25} (2016),  273--280.

\bibitem{ShpYau} I. E. Shparlinski and K.-H. Yau, 
`Bounds of double multiplicative character sums and gaps between residues of exponential functions''
{\it J. Number Theory\/}, {\bf 167} (2016), 304--316. 

\bibitem{Sound} K. Soundararajan, `The fourth moment of  Dirichlet $L$-functions', 
{\it Analytic Number Theory\/}, Clay Math. Proc.~7, Amer. Math. Soc., Providence, RI, 2007,  239--246.


\bibitem{TaoVu}
T. Tao and V. Vu, \textit{Additive combinatorics}, Stud. Adv. Math. 105,
Cambridge Univ. Press, Cambridge, 2006.

\bibitem{VaZa}   M. V{\^ a}j{\^ a}itu and A. Zaharescu,
`Differences between powers of a primitive root', 
{\it  Internat. J. Math. Math. Sci.\/}, {\bf 29} (2002), 325--331.


\bibitem{Wu1}
X. Wu,
`The fourth moment of Dirichlet $L$-functions at the central value',
{\it Math. Ann.\/}, {\bf 387}  (2023), 1199--1248.

\bibitem{Wu2}
X. Wu,
`The fourth moment of Dirichlet $L$-functions along the critical line', 
{\it Forum Math.\/},  {\bf 35} (2023),  1347--1371. 

\end{thebibliography}
\end{document}